
\documentclass[namedate,webpdf]{ima-authoring-template}

\usepackage{amsmath}
\usepackage{stmaryrd}
\usepackage{physics}
\usepackage{braket}
\usepackage{bm}
\usepackage{subfigure}

\graphicspath{{figs/}}


\theoremstyle{thmstyletwo}%
\newtheorem{theorem}{Theorem}[section]
\newtheorem{proposition}[theorem]{Proposition}%
\newtheorem{lemma}[theorem]{Lemma}%
\newtheorem{remark}{Remark}[section]%
\newtheorem{assumption}{Assumption}[section]%

\numberwithin{equation}{section}

\begin{document}

\DOI{10.1093/imanum/draf065}
\copyrightyear{2024}
\vol{00}
\pubyear{2024}
\access{Advance Access Publication Date: Day Month Year}
\appnotes{Paper}
\copyrightstatement{Published by Oxford University Press on behalf of the Institute of Mathematics and its Applications. All rights reserved.}
\firstpage{1}


\title[FEM semi-discrete error analysis of the DFN model]{Optimal convergence in finite element semi-discrete error analysis of the Doyle-Fuller-Newman model beyond 1D with a novel projection operator
}
 
\author{Shu Xu\ORCID{0009-0001-8492-1899}
    \address{\orgdiv{School of Mathematical Sciences}, \orgname{Peking University}, \orgaddress{\state{Beijing} \postcode{100871}, \country{China}} \\ \orgdiv{Institute of Computational Mathematics and Scientiﬁc/Engineering Computing, Academy of Mathematics and Systems Science}, \orgname{Chinese Academy of Sciences}, \orgaddress{\state{Beijing} \postcode{100190}, \country{China}}}
}

\author{Liqun Cao*
\address{\orgdiv{LSEC, NCMIS, Institute of Computational Mathematics and Scientiﬁc/Engineering Computing, Academy of Mathematics and Systems Science}, \orgname{Chinese Academy of Sciences}, \orgaddress{\state{Beijing} \postcode{100190}, \country{China}}}}

\authormark{S. Xu and L. Cao}

\corresp[*]{Corresponding author: \href{email:clq@lsec.cc.ac.cn}{clq@lsec.cc.ac.cn}}

\received{16}{11}{2024} 
\revised{11}{4}{2025}
\accepted{3}{6}{2025}


\abstract{We present a finite element semi-discrete error analysis for the Doyle-Fuller-Newman model, which is the most popular model for lithium-ion batteries. Central to our approach is a novel projection operator designed for the pseudo-($N$+1)-dimensional equation, offering a powerful tool for multiscale equation analysis. Our results bridge a gap in the analysis for dimensions $2 \le N \le 3$ and achieve optimal convergence rates of $h+\qty(\Delta r)^2$. Additionally, we perform a detailed numerical verification, marking the first such validation in this context. By avoiding the change of variables, our error analysis can also be extended beyond isothermal conditions.}
\keywords{lithium-ion batteries; finite element; error analysis; elliptic-parabolic system.}


\maketitle

\section{Introduction}
  The Doyle-Fuller-Newman (DFN) model \citep{doyle_DFN_modeling_1993, fuller_simulation_1994} is the most widely used physics-based model for lithium-ion batteries and is often referred to as a pseudo-two-dimensional (P2D) model when the battery region is simplified to one dimension.  It is essential in various engineering applications, including the calculation of battery state of charge (SOC), capacity performance under different operating conditions, and impedance spectra \citep{bloom_accelerated_2001,shi2015multi,smith_solid-state_2006,ramadesigan_modeling_2012,plett_BMS1_2015,hariharan_mathematical_2018,chen_porous_2022}.

  In the DFN model, a lithium-ion battery occupies a domain $\Omega \subset \mathbb{R}^N$, where $1 \le N \le 3$, and consists of three subdomains: the positive electrode $\Omega_\mathrm{p}$, the negative electrode $\Omega_\mathrm{n}$, and the separator  $\Omega_\mathrm{s}$. These domains form a laminated box structure, with $\bar \Omega = \bar \Omega_\mathrm{n} \cup \bar \Omega_\mathrm{s} \cup \bar \Omega_\mathrm{p}$, as shown in Fig.~\ref{fig:domain}.    
  Additionally, spherical particles with radii $R_\mathrm{p}$ and $R_\mathrm{n}$ are assumed to be uniformly distributed within each electrode and are parameterized by coordinates $\qty(x,r)\in \Omega_{2 r} := \qty(\Omega_\mathrm{n} \times (0, R_\mathrm{n})) \cup \qty(\Omega_\mathrm{p} \times (0, R_\mathrm{p}))$.
  The DFN model seeks to solve for the electrolyte potential $\phi_1(x,t)$, the electrode potential $\phi_2(x,t)$, the electrolyte lithium-ion concentration $c_1(x,t)$, and the lithium-ion concentration within particles $c_2(x,r,t)$. We define the electrolyte region as $\Omega_1 = \Omega_\mathrm{n} \cup \Omega_\mathrm{s} \cup \Omega_\mathrm{p}$ and the electrode region as $\Omega_2 = \Omega_\mathrm{n} \cup \Omega_\mathrm{p}$. Following a macrohomogeneous approach, the model assumes that the electrode and electrolyte phases coexist within $\Omega_2$. 

  \begin{figure}[htbp]
    \centering
    \includegraphics[width=0.5\textwidth]{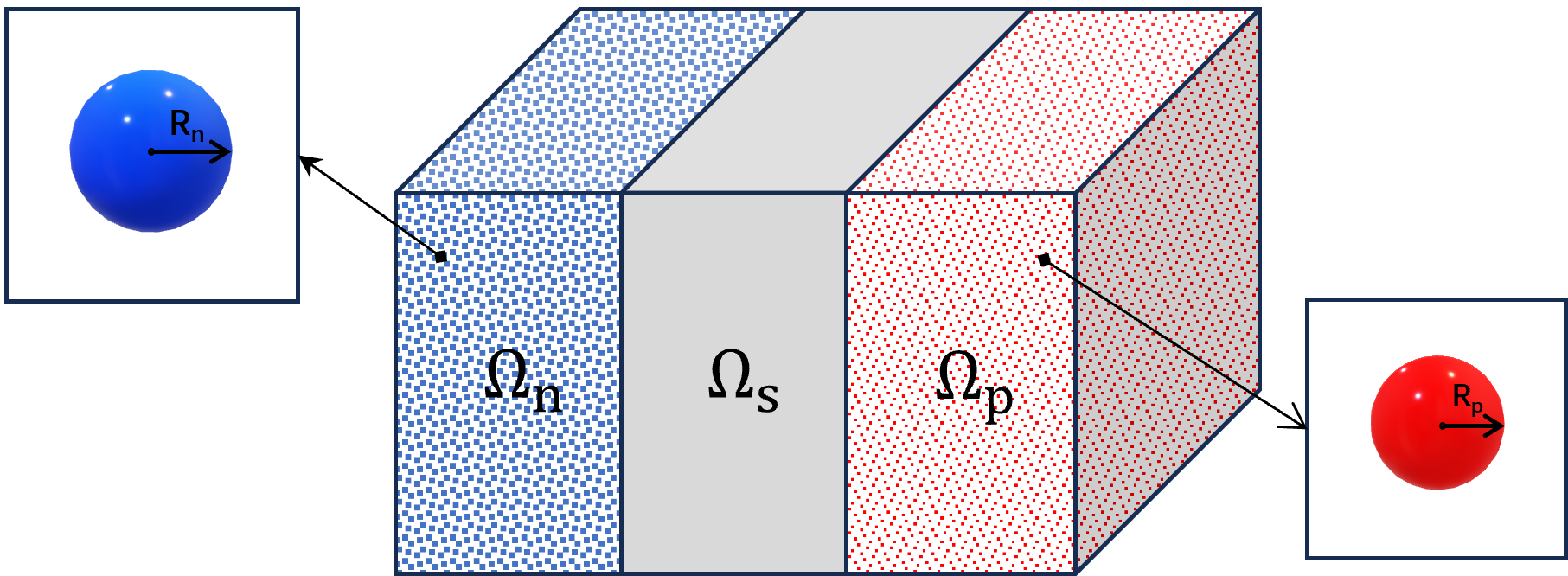}
    \caption{A 3D schematic representation of a Li-ion battery.}
    \label{fig:domain}
  \end{figure}

  Using the aforementioned notation, the final mathematical model is the following multi-domain, multi-scale and fully coupled nonlinear elliptic-parabolic system, consisting of three $N$-dimensional equations and one distinctive pseudo-($N$+1)-dimensional equation:
  \begin{equation}\label{eq:DFN_dfn_strong}
  \begin{cases}
  -\nabla \cdot\left(\kappa_1\qty(c_1) \nabla \phi_1 - \kappa_2\qty(c_1) \nabla f\qty(c_1)\right)=a_2 J\qty(c_1,\bar c_2, \phi_1, \phi_2), & (x, t) \in \Omega_1 \times\left(0, T\right), \\
  -\nabla \cdot\left(\sigma \nabla \phi_2\right)=-a_2 J\qty(c_1,\bar c_2, \phi_1, \phi_2), & (x, t) \in \Omega_2 \times\left(0, T\right), \\
  \varepsilon_1\frac{\partial c_1}{\partial t}-\nabla \cdot\left(k_1 \nabla c_1\right)=a_1 J\qty(c_1,\bar c_2, \phi_1, \phi_2), & (x, t) \in \Omega_1 \times\left(0, T\right), \\
  \frac{\partial c_2}{\partial t}-\frac{1}{r^2} \frac{\partial}{\partial r}\left(r^2 k_2 \frac{\partial c_2}{\partial r}\right)=0, & (x, r, t) \in \Omega_{2 r} \times\left(0, T \right),
  \end{cases}
  \end{equation} 
  with constraints
  \begin{equation}\label{eq:strong_constraints}
  c_1>0,\quad 0 < c_2 < c_{2,\max},
  \end{equation}
  and satisfies the initial conditions
  \begin{gather}\label{eq:strong_initial}
  c_1(x,0) = c_{10}(x) > 0, \quad x \in \Omega_1, \\ 
  c_2(x,r,0) = c_{20}(x,r),\quad 0 < c_{20}(x,r) < c_{2,\max}(x),\quad \qty(x,r) \in \Omega_{2r},
  \end{gather}
  boundary conditions
  \begin{gather}
      -\left.\left(\kappa_1 \nabla \phi_1 - \kappa_2 \nabla f\qty(c_1)\right) \cdot \boldsymbol{n}\right|_{\partial \Omega}=0, \\
      -\left.\sigma \nabla \phi_2 \cdot \boldsymbol{n}\right|_{\Gamma}=I,\quad -\left.\sigma \nabla \phi_2 \cdot \boldsymbol{n}\right|_{\partial \Omega_2 \setminus \Gamma}=0,\label{eq:strong_bdry_phi_2}\\
      -\left.k_1 \nabla c_1 \cdot \boldsymbol{n}\right|_{\partial \Omega} = 0, \\
      -r^2 k_2\left.\pdv{c_2(x)}{r}\right|_{r=0}= 0,\quad  -r^2 k_2\left.\pdv{c_2(x)}{r}\right|_{r=R_\mathrm{s}(x)}= \frac{J(x,c_1\qty(x),\bar c_2\qty(x), \phi_1\qty(x),  \phi_2\qty(x))}{F}, \quad x\in \Omega_2,\label{eq:strong_bdry_c_2}
  \end{gather}
  and the interface conditions 
  \begin{equation}\label{eq:DFN_strong_interface}
      \begin{aligned}
      \left. \llbracket \phi_1 \rrbracket \right|_{\Gamma_{\mathrm{s}k}} &= 0, \quad \left. \left\llbracket \qty(\kappa_1\nabla \phi_1  -\kappa_2 \nabla f(c_1)) \cdot \boldsymbol{\nu} \right\rrbracket\right|_{\Gamma_{\mathrm{s}k}}= 0, \quad &k \in \qty{\mathrm{n,p}},\\ 
      \left. \llbracket c_1 \rrbracket \right|_{\Gamma_{\mathrm{s}k}} &= 0,\quad \left. \left\llbracket k_1 \nabla c_1 \cdot \boldsymbol{\nu} \right\rrbracket \right|_{\Gamma_{\mathrm{s}k}}= 0 , \quad &k \in \qty{\mathrm{n,p}},
      \end{aligned}   
  \end{equation}
  where $\boldsymbol{n} $ is the outward unit normal vector, $\Gamma$ a measurable subset of $\partial \Omega_2$ with positive measure,  $\Gamma_\mathrm{sn}$ and $\Gamma_\mathrm{sp}$ the separator-negative and separator-positive interfaces respectively, $\left. \llbracket v \rrbracket \right|_{\Sigma}  $ the jump of a quantity $v$ across the interface $ \Sigma$ and $\boldsymbol{\nu} $ the outward unit normal vector at $\partial \Omega_\mathrm{s}$. The definition of all parameters and variables will be provided in section~\ref{sec:framework}. 

  It can be observed that the DFN model is a specific instance of our relatively more general mathematical formulation, when $\kappa_2 = \frac{2RT}{F}\kappa_1\qty(1-t^0_+)$, $f = \ln$, and $J$ is governed by the Butler-Volmer equation \citep{bermejo_implicit-explicit_2019}. For a detailed derivation of the DFN model, we recommend referring to \citet{fuller_simulation_1994, ciucci_derivation_2011, arunachalam_veracity_2015, timms_asymptotic_pouch_2021, richardson_charge_2022}. It is worth noting that we do not introduce the change of variables for $\phi_1$ (as used in prior studies such as \citet{wu_well-posedness_2006,diaz_well-posedness_2019,bermejo_numerical_2021}), as this approach is ineffective for non-uniform temperature distributions $T$ in $\kappa_2$. Consequently, some extra techniques are needed to address the coupling term $\nabla f\qty(c_1)$ in the first equation of \eqref{eq:DFN_dfn_strong}. 

  Research on the well-posedness of the DFN model dates back to \citet{wu_well-posedness_2006}, though this early work did not account for mass conservation equations at the particle scale. 
  Local existence and uniqueness of the solution for the full model were later established by \citet{kroener_mathematical_2016,diaz_well-posedness_2019}, with the solution extendable to a globally unique solution under specific conditions \citep{diaz_well-posedness_2019}. However, these studies focus only on the P2D model, leaving a gap in understanding for the actual P4D case. 
  Main mathematical challenges in this domain include strongly nonlinear source terms with singular behaviour, discontinuous and nonlinear coefficients, the lack of smoothness of the boundary and the pseudo-($N$+1)-dimensional equation. Related work can be found in \citet{seger_elliptic-parabolic_2013,ramos_well-posedness_2016,xu_life_2023,price_existence_2024}.

  Efficient simulation of the DFN model remains an active area of research.  
  Various spatial discretization methods, including the finite difference method \citep{fuller_simulation_1994,newman_electrochemical_2019,mao_finitedifference_1994,nagarajan_mathematical_1998}, finite volume method \citep{zeng_efficient_2013,mazumder_faster-than-real-time_2013}, finite element method \citep{bermejo_implicit-explicit_2019,bermejo_numerical_2021}, collocation method \citep{northrop_coordinate_2011,northrop_efficient_2015}, and hybrid method \citep{smith_solid-state_2006,kosch_computationally_2018} are applied to convert the system of PDEs to a system of DAEs. Computationally efficient time-stepping algorithms \citep{brenan_numerical_1995,bermejo_implicit-explicit_2019,korotkin_dandeliion_2021} are then used for full discretization, yielding a large number of algebraic equations with strong nonlinearity.   
  Efforts have also been made to design fast solvers \citep{wu_newton-krylov-multigrid_2002,bermejo_implicit-explicit_2019,han_fast_2021}, including the use of operator splitting \citep{farkas_improvement_2017} techniques, and to develop reduced-order models \citep{smith_control_2007,cai_reduction_2008,forman_reduction_2010,lass_pod_2013,landstorfer_modelling_2022} to improve simulation efficiency for control, online estimation and cell-design optimization applications.  

  Despite significant advancements in numerical methods, rigorous numerical analysis of this model remains limited  \citep{bermejo_numerical_2021}. 
  In \citet{bermejo_numerical_2021}, the first convergence analysis for the finite element discretization of this model in one dimension was presented. The core idea is the construction of an auxiliary approximator for the pseudo-($N$+1)-dimensional variable within the finite element space by composing a spatial barycentric interpolant with a radial projection.
  However, this approach encounters several limitations. 
  First, the well-definedness of the approximator is questionable: the range of the radial projection does not necessarily lie within the domain of the barycentric interpolation operator. Even under the $H^1$ regularity assumption imposed in that work, the lack of a continuous embedding $H^1\qty(\Omega) \hookrightarrow C\qty(\bar \Omega) $ in dimensions $N = 2,3$ renders the interpolation potentially ill-defined.
  Second, the error estimates based on this auxiliary approximator are neither direct nor easily tractable. This difficulty stems from the composition of two distinct operators, and that the barycentric interpolant is not bounded in the $L^2$ norm, which is crucial to his proof. 
  Lastly, the convergence rates proved with respect to $\Delta r$ are suboptimal, as they require specific arguments beyond the general embedding theory to accurately estimate the radial trace error.
  In addition, the study does not include numerical experiments to support its theoretical claims. 
  Thus, the present paper aims to address the aforementioned challenges and gaps, focusing on the finite element semi-discrete error analysis.

  The primary contributions of this paper are fourfold. First, we introduce a novel projection operator for the multiscale singular parabolic equation and derive its associated approximation error. Second, we establish convergence for finite element semi-discrete problems in $N$ dimensions ($1\le N\le3$), addressing prior limitations and demonstrating that the convergence rate is optimal with respect to both mesh size $h$ and $\Delta r$. Third, our model formulation allows for non-uniform temperature distributions, extending our error analysis straightforwardly to thermally coupled DFN models \citep{hariharan_mathematical_2018,hunt_derivation_2020,timms_asymptotic_pouch_2021}. Finally, we validate numerical convergence rates in both 2D and 3D cases using real battery parameters. To our knowledge, this verification has not been previously reported.

  This paper is organized as follows: Section~\ref{sec:framework} introduces the weak formulation and foundational assumptions for the mathematical model. Section~\ref{sec:fem_semi} proposes the novel projection operators and presents the finite element error estimates with optimal convergence rates for the semi-discrete problem.  In section~\ref{sec:experiments}, we confirm the accuracy of these estimates through numerical experiments using real battery parameters. 

\section{Mathematical framework}\label{sec:framework}

  In this section, all data in \eqref{eq:DFN_dfn_strong}-\eqref{eq:DFN_strong_interface} will be clarified. First, we introduce some notation and function spaces. After imposing suitable assumptions for battery modeling, the weak formulation is established.

  \subsection{Notation and function spaces}

    $L^p(\Omega)(1\le p \le \infty)$ and $H^m(\Omega)(m \in \mathbb{N}_0)$ denote Lebesgue and Sobolev spaces defined on $\Omega$ respectively. We also denote the subspace of $H^1\qty(\Omega)$, consisting of functions whose integral is zero, by $H^1_*\qty(\Omega) := \qty{ v \in H^1\qty(\Omega):\: \int_\Omega v\, \dd x = 0}$.
    Since multiple domains $\Omega_\mathrm{n}$, $\Omega_\mathrm{s}$ and $\Omega_\mathrm{p}$ are involved, it is necessary to further define piecewise Sobolev spaces 
    \begin{displaymath}
        H^2_\mathrm{pw}\qty(\Omega_1) =  H^2\qty(\Omega_\mathrm{n}) \cap H^2\qty(\Omega_\mathrm{s}) \cap H^2\qty(\Omega_\mathrm{p})
    \end{displaymath}
    equipped with the norm $\norm{\cdot}_{H^2_\mathrm{pw}\qty(\Omega_1) }:= \norm{\cdot}_{H^2\qty(\Omega_\mathrm{n}) } + \norm{\cdot}_{H^2\qty(\Omega_\mathrm{s}) } + \norm{\cdot}_{H^2\qty(\Omega_\mathrm{p}) }$,
    and 
    \begin{displaymath}
        H^2_\mathrm{pw}\qty(\Omega_2) =   H^2\qty(\Omega_\mathrm{n}) \cap H^2\qty(\Omega_\mathrm{p})
    \end{displaymath}
    with $\norm{\cdot}_{H^2_\mathrm{pw}\qty(\Omega_2) }:= \norm{\cdot}_{H^2\qty(\Omega_\mathrm{n}) } + \norm{\cdot}_{H^2\qty(\Omega_\mathrm{p}) }$.

    The natural space for radial solutions with radial coordinate $r$ is $L^2_r\qty(0,R)$, consisting of measurable functions $v$ defined on $\qty(0,R)$ such that $vr$ is $L^2$-integrable. It is obvious that $L^2_r\qty(0,R)$ is a Hilbert space and can be endowed with the norm 
    \begin{displaymath}
        \norm{v}_{L^2_r\qty(0,R)} = \qty( \int_{0}^R \qty|v(r)|^2r^2 \, \dd r )^\frac{1}{2}.
    \end{displaymath}
    We also denote by $H^m_r\qty(0,R)(m \in \mathbb{N}_0)$ the Hilbert spaces of measurable functions $v$, whose distribution derivatives belong to $L^2_r\qty(0,R)$ up to order $m$, with the norm 
    \begin{equation}
        \norm{v}_{H^m_r\qty(0,R)} = \qty( \sum_{k=0}^{m} \norm{\dv[k]{v}{r}}^2_{L^2_r\qty(0,R)})^\frac{1}{2}.
    \end{equation}
    Besides, it is convenent for pseudo-($N$+1)-dimensional variables to define 
    \begin{displaymath}
        H^p\qty(\Omega_2 ; H^q_r \qty(0,R_\mathrm{s}(\cdot))):= H^p\qty(\Omega_\mathrm{n} ; H^q_r \qty(0,R_\mathrm{n})) \cap H^p\qty(\Omega_\mathrm{p} ; H^q_r \qty(0,R_\mathrm{p})),\quad p,q \in \mathbb{N}_0.
    \end{displaymath} 
    For functions in $H^1_r\qty(0,R)$, we have the following critical trace estimation in \citet{bermejo_numerical_2021}:
    \begin{proposition}\label{prop:DFN_radial_surface}
        There exist an arbitrarily small number $\epsilon$ and a positive (possibly large) constant $C(\epsilon)$, such that for $u \in H^1_r\qty(0,R)$,
        \begin{displaymath}  
            \left|u\left(R\right)\right| 
            \leq \epsilon\left\|\frac{\partial u}{\partial r}\right\|_{L_r^2\left(0, R\right)}  + 
            C( \epsilon)\left\|u\right\|_{L_r^2\left(0, R\right)}.
        \end{displaymath}
    \end{proposition}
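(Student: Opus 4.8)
The plan is to reduce the estimate to a weighted form of the fundamental theorem of calculus applied to the scalar function $r \mapsto r^{3}u(r)^{2}$. First I would observe that the left-hand side is meaningful: on any interval $[\rho,R]$ with $\rho>0$ the weight $r^{2}$ is bounded above and below, so the restriction of $u \in H^{1}_{r}(0,R)$ to $[\rho,R]$ lies in the ordinary Sobolev space $H^{1}(\rho,R)$ and hence admits an absolutely continuous representative there; in particular the trace $u(R)$ is well defined, and $u^{2}$ is absolutely continuous on $[\rho,R]$ with $(u^{2})' = 2uu'$, where $u' = \partial u/\partial r$.

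For $0<\rho<R$ I would then write, using the product rule for absolutely continuous functions,
\[
  R^{3}u(R)^{2} - \rho^{3}u(\rho)^{2} = \int_{\rho}^{R} \frac{\dd}{\dd r}\qty(r^{3}u(r)^{2})\,\dd r = \int_{\rho}^{R}\qty(3r^{2}u^{2} + 2r^{3}uu')\,\dd r .
\]
Since $r^{2}u^{2}\in L^{1}(0,R)$ (this is exactly $\norm{u}_{L^{2}_{r}(0,R)}^{2}<\infty$), one has $\liminf_{r\to0^{+}} r\cdot r^{2}u(r)^{2} = 0$ — otherwise $r^{2}u(r)^{2}\gtrsim 1/r$ near $0$, contradicting integrability — so there is a sequence $\rho_{k}\downarrow 0$ along which $\rho_{k}^{3}u(\rho_{k})^{2}\to 0$. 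Both integrals on the right converge as $\rho_{k}\to0$ by dominated convergence, using $\lvert 2r^{3}uu'\rvert \le 2R\,r^{2}\lvert u\rvert\lvert u'\rvert$ together with the weighted Cauchy–Schwarz inequality $\int_{0}^{R} r^{2}\lvert u\rvert\lvert u'\rvert\,\dd r \le \norm{u}_{L^{2}_{r}(0,R)}\norm{u'}_{L^{2}_{r}(0,R)}$. Passing to the limit gives
\[
  R^{3}u(R)^{2} = \int_{0}^{R}\qty(3r^{2}u^{2} + 2r^{3}uu')\,\dd r \le 3\norm{u}_{L^{2}_{r}(0,R)}^{2} + 2R\,\norm{u}_{L^{2}_{r}(0,R)}\norm{u'}_{L^{2}_{r}(0,R)}.
\]

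Finally I would absorb the cross term by Young's inequality: for any $\delta>0$, $2R\,\norm{u}_{L^{2}_{r}}\norm{u'}_{L^{2}_{r}} \le \delta R^{3}\norm{u'}_{L^{2}_{r}}^{2} + \delta^{-1}R^{-1}\norm{u}_{L^{2}_{r}}^{2}$. Dividing through by $R^{3}$ yields $u(R)^{2} \le \delta\,\norm{u'}_{L^{2}_{r}(0,R)}^{2} + \qty(3R^{-3} + \delta^{-1}R^{-4})\norm{u}_{L^{2}_{r}(0,R)}^{2}$, and taking square roots with $\sqrt{a+b}\le\sqrt a+\sqrt b$ gives the claim with $\epsilon=\sqrt\delta$ (arbitrarily small) and $C(\epsilon)=\qty(3R^{-3}+\epsilon^{-2}R^{-4})^{1/2}$. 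The computation is otherwise routine; the only step requiring genuine care is the limit $\rho\to0$, i.e.\ showing the boundary contribution at the origin vanishes — this must be extracted from the integrability $r^{2}u^{2}\in L^{1}(0,R)$, since no pointwise control of $u$ near $r=0$ is available.
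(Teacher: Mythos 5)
Your proof is correct. Note that the paper itself offers no proof of this proposition --- it is quoted directly from Bermejo and Gal\'an del Sastre (2021) --- so what you have supplied is a self-contained elementary derivation rather than a variant of an argument in the text. Your route is the natural one for weighted radial trace inequalities: integrate the identity $\frac{\dd}{\dd r}\big(r^{3}u^{2}\big)=3r^{2}u^{2}+2r^{3}uu'$ over $(\rho,R)$, kill the boundary term at the origin, estimate the cross term by weighted Cauchy--Schwarz, and split with Young's inequality so that the coefficient of $\|u'\|_{L^2_r}$ can be made arbitrarily small at the cost of a large $C(\epsilon)$. All the steps check out, including the explicit constant $C(\epsilon)=\big(3R^{-3}+\epsilon^{-2}R^{-4}\big)^{1/2}$. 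The one genuinely delicate point --- that no pointwise control of $u$ near $r=0$ is available, so the vanishing of $\rho^{3}u(\rho)^{2}$ along a sequence must be extracted from the integrability of $r^{2}u^{2}$ via a $\liminf$ argument --- is exactly where a careless proof would break, and you handle it correctly; likewise the preliminary observation that $u$ has a continuous representative on every $[\rho,R]$ with $\rho>0$ (hence on $(0,R]$), which makes both $u(R)$ and the pointwise values near the origin meaningful. An alternative, less quantitative route would be a compactness (Ehrling-type) argument using the compact embedding of $H^1_r(0,R)$ into the trace at $r=R$, but your direct computation is preferable since it yields an explicit dependence of $C$ on $\epsilon$ and $R$.
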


  \subsection{Definition and assumptions on the data} 
    In \eqref{eq:DFN_dfn_strong}, except the function $f\colon (0,+\infty) \rightarrow \mathbb{R}$, all the remaining material data is piecewise, i.e.,
    \begin{equation*}
        \kappa_i = \sum_{m\in \set{\mathrm{n,s,p}}}\kappa_{im}\qty(c_1)\boldsymbol{1}_{\Omega_m}, \quad a_i =\sum_{m\in \set{\mathrm{n,p}}} a_{im}\boldsymbol{1}_{\Omega_m},\quad i=1,2, \quad c_{2,\max}  =\sum_{m\in \set{\mathrm{n,p}}}c_{2,\max,m}\boldsymbol{1}_{\Omega_m},
    \end{equation*}
    \begin{equation*}
        \varepsilon_1 =\sum_{m\in \set{\mathrm{n,s,p}}}\varepsilon_{1m}\boldsymbol{1}_{\Omega_m},\quad k_1 =\sum_{m\in \set{\mathrm{n,s,p}}}k_{1m}\boldsymbol{1}_{\Omega_m}, \quad   \sigma =\sum_{m\in \set{\mathrm{n,p}}}\sigma_{m}\boldsymbol{1}_{\Omega_m},\quad k_2 = \sum_{m\in \set{\mathrm{n,p}}}k_{2m}\boldsymbol{1}_{\Omega_m},
    \end{equation*}
    where $\boldsymbol{1}_{\Omega_m}$ is the indicator function for $\Omega_m$, $a_{1m}$, $a_{2m}$, $c_{2,\max,m}$, $\varepsilon_{1m}$, $k_{1m}$, $\sigma_m$, $k_{2m}$   are positive constants,
    \begin{equation*}
        \kappa_{1m},\,\kappa_{2m}\colon (0,+\infty) \rightarrow (0,+\infty).
    \end{equation*}  
    As to the source term $J$, setting $\bar c_2(x,t) = c_2\qty(x,R_\mathrm{s}(x),t)$ and $\eta = \phi_2 - \phi_1 - U$ with $R_\mathrm{s}  =\sum_{m\in \set{\mathrm{n,p}}}R_{m}\boldsymbol{1}_{\Omega_m}$, $U = \sum_{m\in \set{\mathrm{n,p}}}U_m(c_1,\bar c_2)\boldsymbol{1}_{\Omega_m}$ and $U_m \colon (0,+\infty) \times \qty(0,c_{2,\mathrm{max},m})  \rightarrow \mathbb{R}$,
    we suppose 
    \begin{equation*}
        J =\sum_{m\in \set{\mathrm{n,p}}}J_{m}\qty(c_1,\bar c_2,\eta)\boldsymbol{1}_{\Omega_m},
    \end{equation*}
    where 
    $J_m \colon (0,+\infty) \times \qty(0,c_{2,\mathrm{max},m}) \times \mathbb{R}  \rightarrow \mathbb{R}$.  Besides, there is a given function $I\colon \Gamma \rightarrow \mathbb{R}$ in \eqref{eq:strong_bdry_phi_2}, and $F$ is a positive constant in \eqref{eq:strong_bdry_c_2}. 
    We set $\underline{v} = \min_x v$ and $\bar{v} = \max_x{v}$, when $v$ is piecewise constant. 
   
    \begin{assumption}\label{asp:IB_condition}
        $c_{10}\in L^2\qty(\Omega)$, 
        $c_{20} \in L^2\qty(\Omega_2 ; L^2_r \qty(0,R_\mathrm{s}(\cdot)))$.
        $c_{10}(x) > 0, \:x\in \Omega_1\; \mathrm{a.e.}$;
        $ 0 < c_{20}(x,r) < c_{2,\max}(x),\:(x,r) \in \Omega_{2r}\; \mathrm{a.e.}$.
        $I \in L^2\qty(0,T; L^2\qty(\Gamma)).$
    \end{assumption}

    \begin{assumption}\label{asp:nonlinear_function}
        $f \in C^1\qty(0,+\infty)$. 
        $\kappa_{1m},\kappa_{2m} \in C^2\qty((0,+\infty))$,  $ U_m \in C^2\qty((0,+\infty) \times \qty(0,c_{2,\mathrm{max},m}) )$, $J_m \in C^1\qty((0,+\infty) \times \qty(0,c_{2,\mathrm{max},m})  \times \mathbb{R}  )$, $m\in\set{\mathrm{n,p}}$ and 
        \begin{equation*}
            \exists C_0>0\colon \forall \qty(c_1,\bar c_2, \eta) \in (0,+\infty) \times \qty(0,c_{2,\mathrm{max},m})  \times \mathbb{R}, \quad   \pdv{ J_m}{\eta}\qty(c_1,\bar c_2, \eta) \ge C_0. 
        \end{equation*}
    \end{assumption}

  \subsection{Definition of weak solution}
    \label{subsec:framework_weak_sol}
    Given $T>0$ and setting $\Omega_{1T} = \Omega_1 \times \qty(0,T)$, $\Omega_{2rT} = \Omega_{2r} \times \qty(0,T)$, we define a weak solution of \eqref{eq:DFN_dfn_strong}-\eqref{eq:DFN_strong_interface} as a quadruplet $\left(\phi_1, \phi_2, c_1, c_2\right)$,
    \begin{gather}
    \phi_1 \in L^2\left(0, T; H^1_*(\Omega) \right), \quad
    \phi_2 \in L^2\left(0, T; H^1\left(\Omega_2\right)\right),\\
    c_1 \in C\left([0, T); L^{2}(\Omega)\right) \cap L^2\left(0, T; H^1(\Omega)\right), \quad \pdv{c_{1}}{t}  \in L^{2}\left(0, T; L^2(\Omega)\right),  \\
    c_2 \in C\left([0, T); L^2\qty(\Omega_2;L_r^2\qty(0,R_\mathrm{s}\qty(\cdot)))\right) \cap L^2\left(0, T; L^2\qty(\Omega_2;H_r^1\qty(0,R_\mathrm{s}\qty(\cdot))) \right), \\ 
    \pdv{c_{2}}{t}  \in L^{2}\left(0, T;  L^2\qty(\Omega_2;L_r^2\qty(0,R_\mathrm{s}(\cdot)))\right), 
    \end{gather}
    such that  $c_1(0) = c_{10}$, $c_2(0) = c_{20}$, $f\qty(c_1) \in L^2\qty(0,T;H^1\qty(\Omega))$, $J \in L^2\qty(0,T;L^2\qty(\Omega_2))$, $\kappa_1,\, \kappa_2 \in L^\infty\qty(\Omega_{1T})$, $c_{1}(x,t)>0$, $(x,t) \in \Omega_{1T}\;\mathrm{a.e.}$, $ 0 < c_{2}(x,r,t) < c_{2,\max}(x),\,(x,r,t)\in \Omega_{2rT}\;\mathrm{a.e.}$ and for $t \in(0, T)\;\mathrm{ a.e.}$, 
    \begin{equation}\label{eq:DFN_weak_phi1}
        \int_{\Omega} \kappa_1\qty(t) \nabla \phi_1\qty(t) \cdot \nabla \varphi\, \dd x - \int_{\Omega} \kappa_2\qty(t) \nabla f\qty(c_1\qty(t)) \cdot \nabla \varphi \, \dd x -\int_{\Omega_2} a_2 J\qty(t) \varphi \, \dd x = 0,\, 
        \forall \varphi \in H_*^{1}\left(\Omega\right),
    \end{equation}
    \begin{equation}\label{eq:DFN_weak_phi2}
        \int_{\Omega_2} \sigma \nabla \phi_2\qty(t) \cdot \nabla \varphi \, \dd x +\int_{\Omega_2}a_2 J\qty(t)\varphi \, \dd x + \int_{\Gamma}^{} I\qty(t) \varphi\, \dd s =0,\, \forall \varphi \in H^{1}\left(\Omega_2\right),
    \end{equation}
    \begin{equation}\label{eq:DFN_weak_c1}
        \int_{\Omega} \varepsilon_1 \dv{c_1}{t}\qty(t) \varphi \, \dd x +\int_{\Omega} k_1 \nabla c_1\qty(t) \cdot \nabla \varphi \, \dd x - \int_{\Omega_2} a_1 J\qty(t) \varphi \, \dd x =0, \, \forall \varphi \in H^{1}\left(\Omega\right)
    \end{equation}
    \begin{multline}\label{eq:DFN_weak_c2}
        \int_{\Omega_2} \int_0^{R_\mathrm{s}(x)} \dv{c_2}{t}\qty(t) \psi r^2 \, \dd r\dd x + \int_{\Omega_2} \int_0^{R_\mathrm{s}(x)} k_2 \frac{\partial c_2\qty(t)}{\partial r} \frac{\partial \psi}{\partial r} r^2 \;\dd r \dd x + \int_{\Omega_2} \frac{R_\mathrm{s}^2(x)}{F} J(x,t) \psi\left(x,R_\mathrm{s}(x)\right) \dd x =0, \\
        \forall \psi \in L^2\qty(\Omega_2;H_r^{1}\left(0, R_\mathrm{s}(\cdot)\right)).
    \end{multline}

    Under a slightly more stringent version of Assumptions~\ref{asp:IB_condition} and \ref{asp:nonlinear_function} with higher regularity, there is a unique weak solution when $T$ being small enough \citep{kroener_mathematical_2016,diaz_well-posedness_2019}.

\section{Error analysis of finite element semi-discrete problems}\label{sec:fem_semi}

  In this section, we only consider $\bar\Omega_{m}\subset \mathbb{R}^N,\; m\in\set{\mathrm{n, s, p}}$ as polygonal domains such that $\bar\Omega_{m}$ is the union of a finite number of polyhedra. Let $\mathcal{T}_{h,m}$ be a regular family of  triangulation for $\bar\Omega_{m},\; m\in\set{\mathrm{n, s, p}}$, and the meshes match at the interfaces, i.e., the points, edges and faces of discrete elements fully coincide at the interface. Hence, $\mathcal{T}_{h} := \bigcup_{m\in\set{\mathrm{n, s, p}}} \mathcal{T}_{h,m}$ is also a regular family of triangulation for $\bar \Omega$. Additionally, it is assumed that all $(K,P_K,\Sigma_K)$, $K\in\mathcal{T}_{h}$, are finite element affine families \citep{ciarlet2002finite}. For the closed interval $[0,R_\mathrm{s}(x)]$, $x\in \Omega_2$, let $\mathcal{T}_{\Delta_r}(x)$ be a family of regular meshes, such that $[0,R_\mathrm{s}(x)] = \bigcup_{I\in\mathcal{T}_{\Delta_r}(x)}I$. It is also assumed that each element of $\mathcal{T}_{\Delta_r}(x)$ is affine equivalent to a reference element. Since $R_\mathrm{s}$ is a piecewise constant, here only two sets of meshes are considered, i.e.,
  \begin{equation*}
      \mathcal{T}_{\Delta_r}(x)= \left\{\begin{array}{l}\mathcal{T}_{\Delta_r \mathrm{p}}\qcomma{} x\in\Omega_\mathrm{p}, \\ \mathcal{T}_{\Delta_r \mathrm{n}}\qcomma{} x\in\Omega_\mathrm{n}.  \end{array} \right. 
  \end{equation*}
  For each element $K\in \mathcal{T}_{h}$ and $I \in \cup_{x\in \Omega_2} \mathcal{T}_{\Delta_r}(x)$, we use $h_K$ and $\Delta r_I$ for the diameter respectively. Let $h = \max_{K\in \mathcal{T}_{h}}  h_K $ and $\Delta r = \max_{I\in \cup_{x\in \Omega_2} \mathcal{T}_{\Delta_r}(x)}  \Delta r_I$.

  The unknowns $\phi_1$, $\phi_2$ and $c_1$ are discretized by piecewise-linear elements. Let $V_h^{(1)}\left(\bar \Omega\right)$ and $V_h^{(1)}\left(\bar \Omega_2\right)$ be the corresponding finite element spaces for $c_1$ and $\phi_2$, while 
  \begin{equation*}
    W_h\left(\bar \Omega\right)= \qty{w_h \in V_h^{(1)}\left(\bar \Omega\right):\: \int_{\Omega} w_h(x) \dd x = 0},
  \end{equation*}
  for $\phi_1$. 
  For the pseudo-($N$+1) dimensional $c_2$, piecewise-constant elements are used for discretization in the $x$ coordinate while piecewise-linear elements in the $r$ coordinate. Namely, a tensor product finite element space 
  \begin{equation}
    V_{h \Delta r}\left(\bar \Omega_{2 r}\right):=  \qty(V_h^{(0)}\left(\bar \Omega_\mathrm{n}\right) \otimes V_{\Delta r}^{(1)}\left[0,R_\mathrm{n}\right]) \bigcap \qty(V_h^{(0)}\left(\bar \Omega_\mathrm{p}\right) \otimes V_{\Delta r}^{(1)}\left[0,R_\mathrm{p}\right])
  \end{equation}
  is used. Notice that the spatial mesh for $c_2$ is the same as that for $\phi_2$, whereas a dual mesh is used in \citet{bermejo_numerical_2021}.
  Then, we propose finite element semi-discretization for the problem in subsection~\ref{subsec:framework_weak_sol} as follows:

  Given $\left(c_{1 h}^0, c_{2 h \Delta r}^0\right) \in V_h^{(1)}\left(\bar \Omega\right) \times V_{h \Delta r}\left(\bar \Omega_{2 r}\right)$, $c_{1 h}^0(x) > 0$, $0 < c_{2 h \Delta r}^0(x,r) < c_{2,\max}(x)$, for each $t\in \qty[0,T]$, find $\left(\phi_{1 h}(t), \phi_{2 h}(t), c_{1h}(t), c_{2 h\Delta r}(t)\right)$  $\in W_h\left(\bar \Omega\right) \times V_h^{(1)}\left(\bar \Omega_2\right) \times V_h^{(1)}\left(\bar \Omega\right) \times V_{h \Delta r}\left(\bar \Omega_{2 r}\right),$ such that $c_{1h}\qty(0)=c_{1 h}^0$, $c_{2h\Delta r}\qty(0)=c_{2 h\Delta r}^0$, $c_{1h}(x)> 0$, $ 0 < c_{2h\Delta r}(x,r) < c_{2,\max}(x)$, and 
  \begin{equation}\label{eq:DFN_fem_phi1}
      \int_{\Omega} \kappa_{1h}\qty(t)\nabla \phi_{1h}\qty(t) \cdot \nabla w_h\, \dd x - \int_{\Omega} \kappa_{2h}\qty(t)\nabla f\qty(c_{1h}\qty(t)) \cdot \nabla w_h\,\dd x - \int_\Omega a_2 J_h\qty(t) w_h \,\dd x =0,\, \forall w_h \in W_h\left(\bar \Omega\right),
  \end{equation}
  \begin{equation}\label{eq:DFN_fem_phi2}
      \int_{\Omega_2} \sigma \nabla \phi_{2 h}\qty(t) \cdot \nabla v_{2 h}\, \dd x +\int_{\Omega_2} a_2 J_h\qty(t) v_{2h}\, \dd x + \int_\Gamma I\qty(t) v_{2h} \,\dd x =0,\quad \forall v_{2h} \in V_h^{(1)}\left(\bar \Omega_2\right),
  \end{equation}
  \begin{equation}\label{eq:DFN_fem_c1}
      \int_{ \Omega } \varepsilon_1 \dv{c_{1 h}}{t}\qty(t) v_{1h}\, \dd x +\int_{\Omega} k_{1} \nabla c_{1 h}\qty(t) \cdot \nabla v_{1h} \, \dd x-\int_{\Omega_2} a_1 J_h\qty(t) v_{1h}\, \dd x=0,\quad \forall v_{1h} \in V_h^{(1)}\left(\bar \Omega\right),
  \end{equation}
  \begin{multline}\label{eq:DFN_fem_c2}
      \int_{\Omega_2} \int_0^{R_\mathrm{s}(x)} \dv{c_{2 h \Delta r}}{t} \qty(t) v_{h \Delta r} r^2\, \dd r \dd x+\int_{\Omega_2} \int_0^{R_\mathrm{s}(x)} k_{2} \frac{\partial c_{2 h \Delta r}\qty(t)}{\partial r} \frac{\partial v_{h \Delta r}}{\partial r} r^2 \, \dd r \dd x \\
      +\int_{\Omega_2} \frac{R_\mathrm{s}^2(x)}{F} J_h(x,t) v_{h \Delta r}\left(x, R_\mathrm{s}(x)\right)\, \dd x = 0, \quad \forall v_{h \Delta r} \in V_{h \Delta r}\left(\bar \Omega_{2 r}\right),
  \end{multline}
  where $\kappa_{ih}\qty(t) = \sum_{m\in \set{\mathrm{n,s,p}}}\kappa_{im}\qty(c_{1h}\qty(t))\boldsymbol{1}_{\Omega_m}$, $i=1,2$, $J_h\qty(t) =\sum_{m\in \set{\mathrm{n,p}}}J_{m}\qty(c_{1h}\qty(t),\bar c_{2h}\qty(t),\eta_{h}\qty(t))\boldsymbol{1}_{\Omega_m}$, 
  with $\bar{c}_{2 h}(x,t)=c_{2 h \Delta r}\qty(x, R_\mathrm{s}(x), t)$, $U_h\qty(t) = \sum_{m\in \set{\mathrm{n,p}}} U_m(c_{1h}\qty(t),\bar c_{2h}\qty(t))\boldsymbol{1}_{\Omega_m}$, and $\eta_{h}\qty(t) = \phi_{2h}\qty(t) - \phi_{1h}\qty(t) - U_h\qty(t)$.

  To present finite element error analysis, we make the following assumptions:
  \begin{assumption}\label{asp:DFN_fem_IC}
    $ c_{10} \in  H^2_\mathrm{pw}\qty(\Omega_1),\, c_{20} \in  H^1\left(\Omega_2 ; H_r^1(0, R_\mathrm{s}\qty(\cdot))\right) \cap L^2\left(\Omega_2 ; H_r^2\qty(0, R_\mathrm{s}\qty(\cdot))\right)$.
  \end{assumption}
  \begin{assumption}\label{asp:DFN_fem_regularity}
      $\phi_1 \in L^2\left(0, T ; H_\mathrm{p w}^2\left(\Omega_1\right)\right)$,  
      $\phi_2 \in L^2\left(0, T ; H_\mathrm{p w}^2\left(\Omega_2\right)\right)$,  
      $c_1 \in H^1\left(0, T ; H_\mathrm{p w}^2\left(\Omega_1\right)\right)$, 
      $c_2 \in H^1\left(0, T ; H^1\left(\Omega_2 ; H_r^1(0, R_\mathrm{s}\qty(\cdot))\right) \cap L^2\left(\Omega_2 ; H_r^2\qty(0, R_\mathrm{s}\qty(\cdot))\right)\right)$.
  \end{assumption}
  \begin{assumption}\label{asp:DFN_prior_bound}
  There exist constants $L,M,N,Q>0$, such that for $t\in \qty[0,T]$ a.e.,
  \begin{gather*}
      \left\|\phi_i(t)\right\|_{L^{\infty}\left(\Omega_i\right)},\,
      \left\| \nabla \phi_i(t)\right\|_{L^{\infty}\left(\Omega_i\right)} \leq L, \; i=1,2, \\ 
      \frac{1}{M} \leq c_1(t)\leq M, \quad 
      \norm{\nabla c_1(t)}_{L^{\infty}\left(\Omega\right)}\le Q, \quad
      \frac{1}{N} \leq \frac{\bar c_2(t)}{c_{2,\max }} \leq\left(1-\frac{1}{N}\right).
  \end{gather*}
  \end{assumption}
  \begin{remark}
    Without the previously mentioned change of variables, it becomes necessary to explicitly assume the $L^\infty$ boundedness of $\nabla c_1$.
  \end{remark}

  \begin{assumption}\label{asp:DFN_fem_prior_bound}
    There exist constants $L,M,N>0$, such that for $t\in \qty[0,T]$ a.e.,
    \begin{gather*}
        \left\|\phi_{i h}(t)\right\|_{L^{\infty}\left(\Omega_i\right)} \leq L, \; i=1,2, \quad 
        \frac{1}{M} \leq  c_{1h}(t) \leq M, \quad 
        \frac{1}{N} \leq  \frac{\bar c_{2h}(t)}{c_{2,\max }} \leq\left(1-\frac{1}{N}\right).
    \end{gather*}
  \end{assumption}

  \subsection{Pseudo-(N+1)-dimensional projection approximation}
    
    To analyze the finite element error of the multiscale pseudo-($N$+1)-dimensional equation, we need to introduce several projection operators into tensor product spaces and their approximation properties.
    
    In this subsection, let $\Omega$ represent a generic domain and $R$ a positive constant. Denote by $\mathcal{T}_h$ the triangulation of $\bar\Omega$ and by $\mathcal{T}_{\Delta_r}(x)$ the triangulation of the closed interval $[0, R_\mathrm{s}]$. Define the corresponding finite element spaces $V_h^{(0)}\left(\bar \Omega\right)$, $V_h^{(1)}\left(\bar \Omega\right)$ and $V_{\Delta r}^{(1)}\qty[0, R]$ as the spaces of piecewise constant functions, piecewise linear functions over $\bar \Omega$, and piecewise linear functions over $[0, R_\mathrm{s}]$, respectively.

    Given a constant $\lambda>0$ and $b \in L^\infty\qty(\Omega\times\qty(0,R))$, such that $0 <\underline{b} \le b \le \bar b $ a.e., for constants $\underline{b}$ and $\bar b$. 
    Define the projection operator $P_{h\Delta r}: L^2\left(\Omega ; H_r^1\left(0, R\right)\right) \rightarrow V_h^\qty(0)\left(\bar \Omega\right) \otimes$ $V_{\Delta r}^{(1)}\qty[0, R]$, such that
    \begin{equation}\label{eq:DFN_c2_projection_hr}
        \int_{\Omega} \int_0^{R}\left(b \frac{\partial\left(P_{h\Delta r} w-w\right)}{\partial r} \pdv{w_{h\Delta r}}{r}+\lambda\left(P_{h\Delta r} w-w\right) w_{h\Delta r}\right) r^2 \dd r \dd x = 0, \quad
        \forall w_{h\Delta r} \in V_h^\qty(0)\left(\bar \Omega\right) \otimes V_{\Delta r}^{(1)}\qty[0, R] ,
    \end{equation} 
    the projection operator $P_{h}:  L^2\left(\Omega ; H_r^1\left(0, R\right)\right) \rightarrow V_h^\qty(0)\left(\bar \Omega\right) \otimes H_{r}^{1}\qty(0, R)$, such that
    \begin{equation}\label{eq:DFN_c2_projection_h}
        \int_{\Omega} \int_0^{R}\left(b \frac{\partial\left(P_{h} w-w\right)}{\partial r} \pdv{w_{h}}{r}+\lambda\left(P_{h} w-w\right) w_{h}\right) r^2 \dd r \dd x = 0, \quad
        \forall w_{h} \in V_h^\qty(0)\left(\bar \Omega\right) \otimes H_{r}^{1}\qty(0, R) ,
    \end{equation} 
    and the projection operator $P_{\Delta r}: L^2\left(\Omega ; H_r^1\left(0, R\right)\right)\rightarrow L^2\left(\Omega\right) \otimes V_{\Delta r}^{(1)}\qty[0, R]$, such that
    \begin{equation}\label{eq:DFN_c2_projection_r}  
        \int_{\Omega} \int_0^{R}\left(b \frac{\partial\left(P_{\Delta r} w-w\right)}{\partial r} \pdv{w_{\Delta r}}{r}+\lambda\left(P_{\Delta r} w-w\right) w_{\Delta r}\right) r^2 \dd r \dd x = 0, \quad
        \forall w_{\Delta r} \in L^2\left(\Omega\right) \otimes V_{\Delta r}^{(1)}\qty[0, R] .
    \end{equation} 
    It is obvious that all above projection operators are well-defined and $P_{h\Delta r} = P_h P_{\Delta r} =P_{\Delta r} P_h  $. What we are concerned with are their approximation properties.
    
    \begin{lemma}\label{lemma:DFN_projection_dr}
        If $w\in L^2\left(\Omega ; H_r^2\left(0, R\right)\right)$, then for $q=0,1$, we have
        \begin{equation}\label{eq:DFN_estimation_Pr}
            \left\|w-P_{\Delta r} w\right\|_{ L^2\left(\Omega ; H_r^q\left(0, R\right)\right)} \leq C \qty(\Delta r)^{2-q}\|w\|_{ L^2\left(\Omega ; H_r^2\left(0, R\right)\right)} . 
        \end{equation}
    \end{lemma}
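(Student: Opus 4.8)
The plan is to exploit the tensor-product structure so that $P_{\Delta r}$ reduces to a fibrewise one-dimensional energy projection, to settle $q=1$ by Céa's lemma, and to settle $q=0$ by an Aubin--Nitsche duality argument. Since the test functions in \eqref{eq:DFN_c2_projection_r} range over all of $L^2(\Omega)\otimes V_{\Delta r}^{(1)}[0,R]$, one may take $w_{\Delta r}(x,r)=\chi(x)v(r)$ with $\chi\in L^2(\Omega)$ and $v\in V_{\Delta r}^{(1)}[0,R]$ arbitrary; hence for a.e.\ $x\in\Omega$ the section $(P_{\Delta r}w)(x,\cdot)$ is the orthogonal projection of $w(x,\cdot)$ onto $V_{\Delta r}^{(1)}[0,R]$ with respect to the form
\begin{displaymath}
  a_x(u,v):=\int_0^R\bigl(b(x,r)\,\partial_r u\,\partial_r v+\lambda\,uv\bigr)r^2\,\dd r .
\end{displaymath}
Because $0<\underline b\le b\le\bar b$ and $\lambda>0$, $a_x$ is bounded and coercive on $H^1_r(0,R)$ with constants independent of $x$. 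It therefore suffices to prove the fibrewise estimate $\|w(x,\cdot)-(P_{\Delta r}w)(x,\cdot)\|_{H^q_r(0,R)}\le C(\Delta r)^{2-q}\|\partial_r^2 w(x,\cdot)\|_{L^2_r(0,R)}$ with $C$ independent of $x$, and then square and integrate over $\Omega$, using $\|\partial_r^2 w\|_{L^2(\Omega;L^2_r)}\le\|w\|_{L^2(\Omega;H^2_r)}$.

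For $q=1$, Céa's lemma gives $\|w(x,\cdot)-(P_{\Delta r}w)(x,\cdot)\|_{H^1_r}\le C\,\|w(x,\cdot)-\Pi_{\Delta r}w(x,\cdot)\|_{H^1_r}$, where $\Pi_{\Delta r}$ is the piecewise-linear nodal interpolation in $r$; this is legitimate on $H^2_r(0,R)$ because $H^2_r(0,R)\hookrightarrow C[0,R]$ (from $\partial_r^2 v\in L^2_r$ and Cauchy--Schwarz one gets $|\partial_r v(r)|\le|\partial_r v(R)|+C r^{-1/2}\|\partial_r^2 v\|_{L^2_r}$, hence $\partial_r v\in L^1(0,R)$ and $v$ is continuous up to $r=0$ with $\|v\|_{C[0,R]}\le C\|v\|_{H^2_r}$; this also shows $\Pi_{\Delta r}w\in L^2(\Omega)\otimes V_{\Delta r}^{(1)}[0,R]$). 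So the $q=1$ statement reduces to the weighted interpolation estimate $\|v-\Pi_{\Delta r}v\|_{H^q_r(0,R)}\le C(\Delta r)^{2-q}\|\partial_r^2 v\|_{L^2_r(0,R)}$ for $v\in H^2_r(0,R)$, which I prove for both $q$ (the $q=0$ version is needed below). On each interior element $[r_j,r_{j+1}]$ ($j\ge1$) the weight $r^2$ is comparable to a positive constant and $v$ restricts to the ordinary $H^2$, so the classical local interpolation estimate multiplied by that weight gives the weighted local bound. On the first element $[0,r_1]$, where the weight vanishes at the origin, the affine rescaling $r=r_1 t$ reduces the claim to the scale-invariant inequality $\|v-\Pi v\|_{H^q_t(0,1)}\le C\|\partial_t^2 v\|_{L^2_t(0,1)}$ with weight $t^2$, which I would prove by a Deny--Lions/Bramble--Hilbert argument: the correction $v\mapsto v-\Pi v$ is bounded on $H^2_t(0,1)$, annihilates $\mathbb P_1$, and $H^2_t(0,1)$ embeds compactly in $H^1_t(0,1)$ (Rellich on $[\varepsilon,1]$ together with the tail bound $\|v\|_{H^1_t(0,\varepsilon)}\le C\varepsilon\|v\|_{H^2_t}$). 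Summing the local bounds and using $\Delta r_j\le\Delta r$ gives the global estimate; squaring and integrating the $q=1$ case over $\Omega$ completes that part.

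For $q=0$ a duality step is genuinely needed, since bounding $\|w(x,\cdot)-(P_{\Delta r}w)(x,\cdot)\|_{L^2_r}$ directly via the interpolation error and the $H^1_r$ bound loses one power of $\Delta r$. For $g\in L^2_r(0,R)$ let $\psi\in H^1_r(0,R)$ solve $a_x(v,\psi)=\int_0^R v\,g\,r^2\,\dd r$ for all $v\in H^1_r(0,R)$, so $\|\psi\|_{H^1_r}\le C\|g\|_{L^2_r}$ by coercivity. Using the Galerkin orthogonality $a_x(w(x,\cdot)-(P_{\Delta r}w)(x,\cdot),v_h)=0$, the boundedness of $a_x$, the $q=1$ bound, and the interpolation estimate applied to $\psi$,
\begin{displaymath}
  \int_0^R\bigl(w(x,\cdot)-(P_{\Delta r}w)(x,\cdot)\bigr)g\,r^2\,\dd r=a_x\bigl(w(x,\cdot)-(P_{\Delta r}w)(x,\cdot),\,\psi-\Pi_{\Delta r}\psi\bigr)\le C(\Delta r)^2\,\|\partial_r^2 w(x,\cdot)\|_{L^2_r}\,\|\partial_r^2\psi\|_{L^2_r}.
\end{displaymath}
The dual regularity $\|\partial_r^2\psi\|_{L^2_r}\le C\|g\|_{L^2_r}$ follows from the one-dimensional representation $b(x,r)\,r^2\,\partial_r\psi(r)=\int_0^r s^2(\lambda\psi(s)-g(s))\,\dd s$ (the integration constant vanishes, for otherwise $\partial_r\psi$ would behave like $r^{-2}$ near $0$ and leave $L^2_r$), one differentiation, and the weighted Hardy inequality $\int_0^R r^{-4}\bigl|\int_0^r s^2 h(s)\,\dd s\bigr|^2\dd r\le C\int_0^R r^2 h(r)^2\,\dd r$; this uses that $b$ is regular enough in $r$, which holds in the application where $b=k_2$ is independent of $r$. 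Taking the supremum over $g$ with $\|g\|_{L^2_r}\le1$ yields $\|w(x,\cdot)-(P_{\Delta r}w)(x,\cdot)\|_{L^2_r}\le C(\Delta r)^2\|\partial_r^2 w(x,\cdot)\|_{L^2_r}$; squaring and integrating over $\Omega$ finishes the proof.

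The step I expect to be the main obstacle is controlling the degenerate weight $r^2$ near $r=0$: it rules out a naive scaling of a unit-weight reference estimate on the first element (hence the weighted Bramble--Hilbert argument), and it makes the $H^2_r$-regularity of the dual problem delicate, since the pointwise Cauchy--Schwarz estimate of $\int_0^r s^2(\lambda\psi-g)\,\dd s$ is not square-integrable against $r^{-4}r^2\,\dd r$, so one must instead invoke the sharp weighted Hardy inequality. The advantage of working with the energy projection $P_{\Delta r}$ rather than directly with an interpolant is precisely that it makes these quasi-optimality (Céa and Aubin--Nitsche) arguments available.
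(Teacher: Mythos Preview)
Your approach is correct and matches the paper's: both first reduce $P_{\Delta r}$ to the fibrewise one-dimensional energy projection by testing against separable functions $\chi(x)v_{\Delta r}(r)$, after which the paper simply cites \citet[Chap.~18]{thomee_galerkin_2006} for the weighted $H^q_r$ estimates, whereas you spell out the C\'ea/Aubin--Nitsche arguments and the handling of the degenerate weight near $r=0$. Your caveat that the dual-regularity step needs $b$ sufficiently regular in $r$ is well taken and not addressed in the paper's terse proof; it is harmless here since in the application $b=k_2$ is independent of $r$ (as the paper makes explicit in the very next lemma).
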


    \begin{proof}
      We prove that $P_{\Delta r} w(x)$ is also the $H^1_r$-orthogonal projection of $w(x) \in H^1_r(0,R)$  on $V_{\Delta r}^{(1)}\qty[0, R]$, i.e., 
      \begin{equation}\label{eq:DFN_r_orthogonal}
          \int_0^{R}\left(b(x) \frac{\partial\left(P_{\Delta r} w(x)-w(x)\right)}{\partial r} \frac{d v_{\Delta r}}{d r}+\lambda\left(P_{\Delta r} w-w\right)(x) v_{\Delta r}\right) r^2 \dd r = 0, 
          \forall v_{\Delta r} \in  V_{\Delta r}^{(1)}\qty[0, R], \, x \in \Omega \, \mathrm{a.e.} .
      \end{equation}   
        Take the test function $ w_{\Delta r}= \phi v_{\Delta r}$ with $\phi\in L^2\qty(\Omega)$ and $v_{\Delta r}\in V_{\Delta r}^{(1)}\qty[0, R]$ in \eqref{eq:DFN_c2_projection_r}. Since the left-hand side of \eqref{eq:DFN_r_orthogonal} is in $ L^2\qty(\Omega)$,  \eqref{eq:DFN_r_orthogonal} follows due to the arbitrariness of $\phi$. 
        
      Then just following the same argument in \citet[Chap. 18]{thomee_galerkin_2006}, \eqref{eq:DFN_estimation_Pr} will be proved.
    \end{proof}
    \begin{lemma}\label{lemma:DFN_projection_dr_surf}
      If $w\in L^2\left(\Omega ; H_r^2\left(0, R\right)\right)$ and $b$ is independent of $r$, there exists a constant C depending on $b $, $\lambda$ and $R$  such that 
      \begin{equation}\label{eq:DFN_projection_dr_surf}
          \norm{w\qty(\cdot,R) - P_{\Delta r}w (\cdot,R)}_{L^2\qty(\Omega)} \leq C \qty(\Delta r)^{2}\|w\|_{ L^2\left(\Omega ; H_r^2\left(0, R\right)\right)}. 
      \end{equation}
    \end{lemma}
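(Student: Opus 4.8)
The plan is to reduce the claim to a pointwise-in-$x$ statement and then invoke a superconvergence-type estimate at the endpoint $r = R$ for the one-dimensional weighted $H^1_r$-projection. Since $b$ is independent of $r$, Lemma~\ref{lemma:DFN_projection_dr} and its proof show that for a.e. $x \in \Omega$ the function $P_{\Delta r}w(x,\cdot)$ is exactly the weighted elliptic ($H^1_r$-orthogonal) projection of $w(x,\cdot) \in H^2_r(0,R)$ onto $V_{\Delta r}^{(1)}[0,R]$, characterized by \eqref{eq:DFN_r_orthogonal}. So I would fix such an $x$, write $e(x,\cdot) := w(x,\cdot) - P_{\Delta r}w(x,\cdot)$, and aim to bound $|e(x,R)|$ by $C(\Delta r)^2 \|w(x,\cdot)\|_{H^2_r(0,R)}$; squaring and integrating over $\Omega$ then gives \eqref{eq:DFN_projection_dr_surf}.

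For the pointwise endpoint bound I would use a duality (Aubin--Nitsche / Green's function) argument at the node $r = R$. Let $g \in H^1_r(0,R)$ solve the adjoint weighted two-point problem $-\frac{1}{r^2}\frac{d}{dr}\!\left(r^2 b\,\frac{dg}{dr}\right) + \lambda g = 0$ on $(0,R)$ with the Neumann condition at $r=0$ and a unit "point load" at $r=R$, so that the weighted bilinear form $a_r(v,g) = v(R)$ for all $v \in H^1_r(0,R)$; here one uses that point evaluation at $R$ is a bounded functional on $H^1_r(0,R)$ by Proposition~\ref{prop:DFN_radial_surface}, and $g$ is smooth (in $H^2_r$, in fact piecewise analytic) away from — and including — $r=R$ since the weight $r^2$ is nondegenerate there. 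Then $e(x,R) = a_r(e(x,\cdot), g) = a_r(e(x,\cdot), g - I_{\Delta r}g)$ by Galerkin orthogonality \eqref{eq:DFN_r_orthogonal}, where $I_{\Delta r}g \in V_{\Delta r}^{(1)}[0,R]$ is the nodal interpolant. Cauchy--Schwarz in the energy norm, together with the standard energy estimate $\|e(x,\cdot)\|_{H^1_r(0,R)} \le C(\Delta r)\|w(x,\cdot)\|_{H^2_r(0,R)}$ from Lemma~\ref{lemma:DFN_projection_dr} and the interpolation bound $\|g - I_{\Delta r}g\|_{H^1_r(0,R)} \le C(\Delta r)\|g\|_{H^2_r(0,R)} \le C\,\Delta r$, yields the $(\Delta r)^2$ factor. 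The constant depends only on $b$, $\lambda$, $R$, precisely as claimed, since these are the only data entering $g$.

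The main obstacle I anticipate is making the duality argument rigorous in the weighted space $L^2_r$ with the degenerate weight $r^2$ at the origin: one must verify that the adjoint "Green's function" problem for the functional $v \mapsto v(R)$ is well-posed in $H^1_r(0,R)$ and that its solution has enough regularity near $r=0$ (where $r^2$ degenerates) to justify $\|g\|_{H^2_r} \le C(b,\lambda,R)$, or at least enough to control $\|g - I_{\Delta r}g\|_{H^1_r}$ by $C\,\Delta r$. Since $b$ is constant in $r$, the adjoint equation is a modified-Bessel-type ODE whose regular solution (the one finite at $0$) is smooth on $[0,R]$, so this is workable but needs care; alternatively, one can avoid the full Green's function by noting that the functional is concentrated at $R$ and the weight is harmless on a neighborhood $[R/2, R]$, localizing the argument there and treating the far region by the already-established $H^1_r$ estimate. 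A secondary (routine) point is checking that the $x$-dependence of $w$ passes cleanly through the squaring-and-integrating step, which is immediate since the pointwise constant is uniform in $x$. I expect the authors instead simply cite the corresponding endpoint/superconvergence result from \citet[Chap.~18]{thomee_galerkin_2006}, adapted to the radial weight, as they did for Lemma~\ref{lemma:DFN_projection_dr}.
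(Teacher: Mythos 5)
Your proposal is correct and is essentially the paper's proof: the authors also represent the endpoint value via a Green's function for the weighted adjoint problem with a unit Neumann load at $r=R$, apply Galerkin orthogonality to subtract the nodal interpolant $\operatorname{I}_{\Delta r}G$, and combine Cauchy--Schwarz with the $H^1_r$ estimate of Lemma~\ref{lemma:DFN_projection_dr} to get the $(\Delta r)^2$ rate before integrating over $\Omega$. The only difference is that the paper resolves the regularity issue you flag by writing $G$ explicitly as $-\frac{1}{\beta}\frac{e^{\alpha r}-e^{-\alpha r}}{r}$ (exactly the regular modified-Bessel-type solution you predict), which is visibly $C^\infty[0,R]$.
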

    \begin{proof}
      Given $k>0$, setting $\alpha = \sqrt{\frac{\lambda}{k}}$, $\beta = k\qty(\alpha R \qty(e^{\alpha R}+e^{-\alpha R}) - \qty(e^{\alpha R}-e^{- \alpha R})) $, it is easy to check that 
      \begin{equation*}
        G(r) = -\frac{1}{\beta} \frac{e^{\alpha r} - e^{-\alpha r}}{r} \in C^{\infty}\qty[0,R]
      \end{equation*}
      is a solution of the boundary value problem: 
      \begin{equation}\label{eq:DFN_projection_dr_surf_aux}
          \left\{\begin{array}{l}
          -\frac{\partial}{\partial r}\left(k r^2 \frac{\partial u}{\partial r}\right) + \lambda  u r^2=0 \,\,\,\, \mathrm{ in }\,\,\,\, \qty(0,R),  \\
          -\left. k r^2 \frac{\partial u}{\partial r}\right|_{r=0}=0, \\
          -\left. k r^2 \frac{\partial u}{\partial r}\right|_{r=R}=1.
          \end{array}\right.
      \end{equation}
      Hence for $x \in \Omega$, a.e., taking $k$ as $b(x)$ in \eqref{eq:DFN_projection_dr_surf_aux}, we have 
      \begin{displaymath}
          \qty(w - P_{\Delta r}w) \qty(x,R) =  \int_0^{R}\left(b(x) \frac{\partial\left(P_{\Delta r} w-w\right)(x,\cdot)}{\partial r} \frac{d G}{d r}+\lambda\left(P_{\Delta r} w-w\right)(x,\cdot) G\right) r^2 \dd r .
      \end{displaymath}
      Let $\operatorname{I}_{\Delta r}\colon H^1\qty(0,R) \rightarrow V^{(1)}_{\Delta r}\qty[0,R]$ be the nodal Lagrange interpolation operator and then by \eqref{eq:DFN_r_orthogonal},
      \begin{subequations} \label{eq:DFN_projection_dr_surf_x}
          \begin{align}
          \qty|\qty(w - P_{\Delta r}w) \qty(x,R)| \nonumber
          =&  \qty|\int_0^{R}\left(b \frac{\partial\left(P_{\Delta r} w-w\right)(x,\cdot)}{\partial r} \frac{d \qty(G-\operatorname{I}_{\Delta r} G)}{d r}+\lambda\left(P_{\Delta r} w-w\right)(x,\cdot) \qty(G-\operatorname{I}_{\Delta r} G)\right) r^2 \dd r | \nonumber  \\
          \leq & C\left\|w(x,\cdot)-P_{\Delta r} w(x,\cdot)\right\|_{H_r^{1}(0, R)}\|G-\operatorname{I}_{\Delta r} G\|_{H_r^{1}(0, R)} \nonumber \\
          \leq & C\left\|w(x,\cdot)-P_{\Delta r} w(x,\cdot)\right\|_{H_r^{1}(0, R)}\|G-\operatorname{I}_{\Delta r} G\|_{H^{1}(0, R)} \nonumber \\
            \leq & C(\Delta r)^2\|w(x,\cdot)\|_{ H_r^2(0, R)}.\nonumber 
          \end{align}
      \end{subequations}
      Integrating square of both sides over $\Omega$, \eqref{eq:DFN_projection_dr_surf} follows.
    \end{proof}
    
    \begin{lemma}\label{lemma:DFN_projection_Ph}
        If $w\in H^1\left(\Omega ; H_r^1\left(0, R\right)\right) \cap L^2\left(\Omega ; H_r^2\left(0, R\right)\right) $, then we have
        \begin{equation}\label{eq:DFN_estimation_Ph}
            \left\|w-P_h w\right\|_{L^2\left(\Omega; H_r^1(0, R)\right)} \leq C h\|w\|_{H^1\left(\Omega ; H_r^1\left(0, R\right)\right)} ,
        \end{equation}
    \end{lemma}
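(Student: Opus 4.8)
The plan is to exploit the tensor-product structure and the key identity $P_h = P_{h\Delta r}$ is \emph{not} what we want here; rather, $P_h$ maps into $V_h^{(0)}(\bar\Omega)\otimes H^1_r(0,R)$, so the discretization happens only in the spatial variable $x$. I would therefore recognize $P_h$ as, essentially, the $L^2(\Omega)$-orthogonal projection onto piecewise-constant functions $V_h^{(0)}(\bar\Omega)$, acting pointwise in an appropriate $H^1_r$-weighted inner product. Concretely, introduce on $H^1_r(0,R)$ the $x$-dependent bilinear form $a_x(u,v) = \int_0^R\!\big(b(x,\cdot)\,\partial_r u\,\partial_r v + \lambda u v\big)r^2\,dr$, and let $\Pi_h$ denote the $L^2(\Omega)$-projection onto $V_h^{(0)}(\bar\Omega)$ (extended to Hilbert-space-valued functions elementwise, i.e., on each $K\in\mathcal T_h$, $(\Pi_h w)|_K = \frac{1}{|K|}\int_K w\,dx \in H^1_r(0,R)$). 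The first step is to show $P_h w = \Pi_h w$ when $b$ is independent of $x$ on each element, or more robustly to argue directly from \eqref{eq:DFN_c2_projection_h}: taking test functions $w_h = \chi_K \otimes v$ with $\chi_K$ the indicator of element $K$ and $v\in H^1_r(0,R)$ arbitrary, the defining relation decouples into $\int_K a_x\big((P_h w - w)(x,\cdot),\,v\big)\,dx = 0$ for all $v$, hence $\int_K (P_h w - w)(x,\cdot)\,dx = 0$ in the (weak) sense dual to $H^1_r$, which since $P_h w$ is constant in $x$ on $K$ forces $P_h w|_K = \frac{1}{|K|}\int_K w(x,\cdot)\,dx$ up to the coercivity of $a_x$. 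So $P_h$ really is the elementwise average in $x$.

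Given that identification, the estimate \eqref{eq:DFN_estimation_Ph} is a standard Bramble--Hilbert / Poincaré argument carried out with Hilbert-space-valued functions. On each element $K$, the error $w - \Pi_h w$ has zero mean over $K$, so by the Poincaré--Wirtinger inequality applied in the Bochner space $L^2(K; H^1_r(0,R))$ we get
\begin{equation*}
  \|w - \Pi_h w\|_{L^2(K; H^1_r(0,R))} \le C\,h_K\,\|\nabla_x w\|_{L^2(K; H^1_r(0,R))}.
\end{equation*}
Here the point is simply that the classical scalar Poincaré inequality tensorizes: one applies it componentwise or, cleaner, notes that $v\mapsto \|v\|_{H^1_r(0,R)}$ is a norm on a Hilbert space and the vector-valued Poincaré inequality holds with the same constant as the scalar one on a convex (or star-shaped, or Lipschitz) domain $K$. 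Summing the squares over $K\in\mathcal T_h$ and using $h_K\le h$ yields
\begin{equation*}
  \|w - P_h w\|_{L^2(\Omega; H^1_r(0,R))}^2 \le C\,h^2 \sum_{K}\|\nabla_x w\|_{L^2(K; H^1_r(0,R))}^2 = C\,h^2\,\|w\|_{H^1(\Omega; H^1_r(0,R))}^2,
\end{equation*}
which is \eqref{eq:DFN_estimation_Ph}. The hypothesis $w \in L^2(\Omega; H^2_r(0,R))$ is not even needed for this bound — only $w\in H^1(\Omega; H^1_r(0,R))$ — so I would either note that the extra regularity is harmless or observe that it is carried along only because the companion lemmas need it.

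The main obstacle I anticipate is \emph{not} the approximation estimate itself but making the identification $P_h w = \Pi_h w$ fully rigorous when $b$ genuinely depends on $x$ within an element: then $a_x$ varies with $x$ and $P_h w|_K$ is the solution of a constrained minimization rather than a literal average, and one must check that the argument ``$\int_K a_x(P_h w - w, v)\,dx = 0$ for all $v$ implies the desired bound'' still goes through. The clean fix is to avoid the explicit formula altogether: use the Galerkin-orthogonality/quasi-optimality of $P_h$ directly. From \eqref{eq:DFN_c2_projection_h}, $P_h w$ is the best approximation to $w$ from $V_h^{(0)}(\bar\Omega)\otimes H^1_r(0,R)$ in the norm induced by $\int_\Omega a_x(\cdot,\cdot)\,dx$, which by coercivity and boundedness of $b$ is equivalent to the $L^2(\Omega; H^1_r(0,R))$ norm; hence $\|w - P_h w\|_{L^2(\Omega;H^1_r)} \le C\inf_{v_h}\|w - v_h\|_{L^2(\Omega;H^1_r)} \le C\|w - \Pi_h w\|_{L^2(\Omega;H^1_r)}$, and one finishes with the Poincaré estimate above for the concrete choice $\Pi_h w$. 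This route sidesteps the $x$-dependence of $b$ cleanly and is the version I would write up.
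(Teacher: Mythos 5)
Your proposal is correct, and at the top level it follows the same skeleton as the paper: reduce to quasi-optimality of $P_h$ in the (energy-equivalent) $L^2\left(\Omega;H^1_r(0,R)\right)$ norm, and then estimate the distance from $w$ to the elementwise $x$-average $\Pi_h w = P_0^x w$, which lies in $V_h^{(0)}(\bar\Omega)\otimes H^1_r(0,R)$. Your instinct to distrust the literal identification $P_h w = \Pi_h w$ for $x$-dependent $b$, and to route the argument through best approximation instead, matches what the paper actually does. Where you genuinely diverge is in how the estimate for $w-\Pi_h w$ is obtained. The paper estimates $\left\|w-P_0^x w\right\|$ and $\left\|\partial_r w-P_0^x\partial_r w\right\|$ separately by scalar $L^2$-projection theory and then proves, as a separate step, that $P_0^x$ commutes with $\partial_r$; that commutation is established via the point-evaluation functional $D_r\in\left[H^2_r(0,R)\right]'$, which is precisely where the hypothesis $w\in L^2\left(\Omega;H^2_r(0,R)\right)$ enters their argument. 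Your vector-valued Poincar\'e--Wirtinger inequality in the Bochner space $L^2\left(K;H^1_r(0,R)\right)$ packages the commutation automatically (expand in an orthonormal basis of $H^1_r(0,R)$ and apply the scalar inequality componentwise, with the same constant on convex $K$), so your observation that the $H^2_r$ regularity is not needed for this particular lemma is accurate --- the paper's proof uses it only as a convenient way to justify the interchange, and a cleaner justification is that $\partial_r$ is a bounded operator from $H^1_r(0,R)$ to $L^2_r(0,R)$ and hence commutes with Bochner integrals. One further small point in your favour: you carry an explicit constant in the quasi-optimality step coming from the equivalence (not equality) of the energy norm and the $L^2\left(\Omega;H^1_r(0,R)\right)$ norm when $b\not\equiv 1$ or $\lambda\neq 1$, whereas the paper states the best-approximation inequality with constant one. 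Your write-up would be a valid, and arguably tidier, substitute for the paper's proof.
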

    \begin{proof}
        Since $P_h$ is the orthogonal projection, we have the best approximation property that 
        \begin{equation}\label{eq:DFN_Ph_cea_lemma}
            \norm{w-P_h w}_{L^2\left(\Omega ; H_r^1\left(0, R\right)\right)} \le \mathrm{inf}_{v_h \in V_h^\qty(0)\left(\bar \Omega\right) \otimes H_{r}^{1}\qty(0, R)} \norm{w-v_h}_{L^2\left(\Omega ; H_r^1\left(0, R\right)\right)}.
        \end{equation}
        Define the operator $P_0^x: L^2\left(\Omega ; H_r^1\left(0, R\right)\right) \rightarrow V_h^{(0)}\left(\bar\Omega\right) \otimes H_{r}^1(0, R)$,
        \begin{equation}
            P_0^x v(x , r)= \sum_{l=1}^{M} \frac{\int_{{e}_l} v(y, r) \dd y}{\left|{e}_l\right|} \chi_l(x), \quad \qty(x,r) \in \Omega\times \qty(0,R) \, \, \, \, \mathrm{a.e.},
        \end{equation} 
        where distinct $e_l \in \mathcal{T}_h$ and $\bar\Omega = \bigcup_{l=1}^{M} e_l$.  
        By the approximation theory of the $L^2$-projection operator $P_0: L^2\left(\Omega\right) \rightarrow$ $V_h^{(0)}\left(\bar\Omega\right)$, for $w \in H^1\left(\Omega ; H_r^1\left(0, R\right)\right) $, we have  
        \begin{gather}
            \left\|w-P_0^x w\right\|_{L^2\left(\Omega; L_r^2(0, R)\right)} \leq C h\|w\|_{H^1\left(\Omega ; L_r^2\left(0, R\right)\right)},\label{eq:DFN_projection_P0x_error_w} \\ 
            \left\|\pdv{w}{r} - P_0^x \pdv{w}{r}\right\|_{L^2\left(\Omega; L_r^2(0, R)\right)} \leq C h\norm{\pdv{w}{r}}_{H^1\left(\Omega ; L_r^2\left(0, R\right)\right)},\label{eq:DFN_projection_P0x_error_dw}
        \end{gather}
        where $C>0$ only depends on the shape regularity of $\mathcal{T}_h$.
        It remains to prove that operators $P_0^x $ and $\pdv{}{r}$ are commutative so that
        \begin{equation}\label{eq:DFN_projection_P0x_error_dP0w}
            \left\|\pdv{w}{r}- \pdv{P_0^x w}{r}\right\|_{L^2\left(\Omega; L_r^2(0, R)\right)} \leq C h\norm{\pdv{w}{r}}_{H^1\left(\Omega ; L_r^2\left(0, R\right)\right)}.
        \end{equation}
        If so, just take $v_h = P_0^x w$ in \eqref{eq:DFN_Ph_cea_lemma} and then \eqref{eq:DFN_estimation_Ph} is proved by \eqref{eq:DFN_projection_P0x_error_w} and \eqref{eq:DFN_projection_P0x_error_dP0w}. 

        Define $D_r\colon H_r^2(0, R) \rightarrow \mathbb{R}$, $D_r\qty(v):= \pdv{v}{ r}\ (r)$. Since $\qty|\pdv{w}{r}\ (r)| \le C\qty(r) \norm{\pdv{w}{r}}_ {H_r^1(0, R)}$ by \citet[Lem. 4]{bermejo_numerical_2021} or \citet[Lem. 2.4]{schreiber_finite_1981}, we can conclude that $D_r \in \qty[H_r^2(0, R)]'$  and thus for $w \in L^2\left(\Omega ; H_r^2\left(0, R\right)\right)$,
        \begin{equation}
            \begin{aligned}
                P_0^x \pdv{w}{r}\ (x , r) & = \sum_{l=1}^{M} \frac{\int_{{e}_1} \pdv{w}{r}\ (y, r) \dd y}{\left|{e}_l\right|} \chi_l(x)  
                                        = \sum_{l=1}^{M} \frac{\int_{{e}_1} \qty(D_r, w(y,\cdot)) \dd y}{\left|{e}_l\right|} \chi_l(x) \\ 
                                        &  = \sum_{l=1}^{M} \frac{ \qty(D_r, \int_{{e}_1} w(y,\cdot) \dd y ) }{\left|{e}_l\right|} \chi_l(x)
                                        = \qty(D_r, \sum_{l=1}^{M} \frac{ \int_{{e}_1} w(y,\cdot) \dd y  }{\left|{e}_l\right|} \chi_l(x)) \\ & =  \pdv{P_0^x w}{r}\ (x , r)\qcomma{} \qty(x,r) \in \Omega\times \qty(0,R) \, \, \, \, \mathrm{a.e.},
            \end{aligned}
        \end{equation}
        i.e., $P_0^x $ and $\pdv{}{r}$ are commutative, which completes the proof.
    \end{proof}

    \begin{theorem}\label{lemma:DFN_projection_hdr}
        If $w\in H^1\left(\Omega ; H_r^1\left(0, R\right)\right) \cap L^2\left(\Omega ; H_r^2\left(0, R\right)\right) $, then for $q=0,1$,
        \begin{equation*}\label{eq:DFN_projection_hdr_error}
            \norm{w-P_{h\Delta r}w}_{L^2\left(\Omega ; H_r^q\left(0, R\right)\right)} \le C \qty(h\|w\|_{H^1\left(\Omega ; H_r^1\left(0, R\right)\right)} + \qty(\Delta r)^{2-q} \|w\|_{L^2\left(\Omega ; H_r^2\left(0, R\right)\right)}).
        \end{equation*}
    \end{theorem}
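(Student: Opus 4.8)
The plan is to exploit the commuting factorization $P_{h\Delta r}=P_{\Delta r}P_h=P_hP_{\Delta r}$ provided by the tensor-product structure, and to split the total error into a spatial part controlled by $P_h$ and a radial part controlled by $P_{\Delta r}$. Concretely I would write, for $q=0,1$,
\begin{equation*}
  w-P_{h\Delta r}w=(w-P_hw)+\bigl(P_hw-P_{\Delta r}P_hw\bigr)=(w-P_hw)+(I-P_{\Delta r})(P_hw),
\end{equation*}
using $P_{h\Delta r}w=P_{\Delta r}(P_hw)$, and estimate the two summands separately in $L^2(\Omega;H^q_r(0,R))$.

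For the first summand, Lemma~\ref{lemma:DFN_projection_Ph} applies directly, since its hypotheses on $w$ are precisely those of the theorem; it gives $\|w-P_hw\|_{L^2(\Omega;H^1_r(0,R))}\le Ch\|w\|_{H^1(\Omega;H^1_r(0,R))}$, and because $\|\cdot\|_{L^2(\Omega;H^0_r)}\le\|\cdot\|_{L^2(\Omega;H^1_r)}$ this one bound covers both $q=0$ and $q=1$. For the second summand, Lemma~\ref{lemma:DFN_projection_dr} applied with $P_hw$ in place of $w$ yields $\|(I-P_{\Delta r})(P_hw)\|_{L^2(\Omega;H^q_r(0,R))}\le C(\Delta r)^{2-q}\|P_hw\|_{L^2(\Omega;H^2_r(0,R))}$, provided $P_hw\in L^2(\Omega;H^2_r(0,R))$. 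Adding the two bounds via the triangle inequality then gives the asserted estimate, so the whole proof reduces to the stability property
\begin{equation*}
  \|P_hw\|_{L^2(\Omega;H^2_r(0,R))}\le C\|w\|_{L^2(\Omega;H^2_r(0,R))}.
\end{equation*}

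Establishing this stability is the heart of the matter. The key observation is that $P_h$ coincides with the operator $P_0^x$ introduced in the proof of Lemma~\ref{lemma:DFN_projection_Ph}, namely the elementwise $L^2$-average in the $x$-variable: since the coefficient $b$ in \eqref{eq:DFN_c2_projection_h} is piecewise constant on $\mathcal{T}_h$ (it is a material coefficient, constant on each subdomain, and the mesh is fitted to the subdomains), the $a$-orthogonality defining $P_h$ decouples over the elements $e_l$ and forces the $r$-fibre of $P_hw$ on $e_l$ to equal $|e_l|^{-1}\int_{e_l}w(x,\cdot)\,\dd x$. From the proof of Lemma~\ref{lemma:DFN_projection_Ph} we already know $P_0^x$ commutes with $\partial/\partial r$, and $P_0^x$ is an $L^2$-contraction in $x$ by the Cauchy--Schwarz inequality; hence $\|\partial_r^k P_hw\|_{L^2(\Omega;L^2_r)}=\|P_0^x\partial_r^kw\|_{L^2(\Omega;L^2_r)}\le\|\partial_r^kw\|_{L^2(\Omega;L^2_r)}$ for $k=0,1,2$, which simultaneously shows $P_hw\in L^2(\Omega;H^2_r(0,R))$ and gives the stability bound. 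The resulting constant $C$ depends only on $\underline b$, $\bar b$, $\lambda$, $R$ and the shape regularity of the meshes.

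I expect the principal obstacle to be exactly this reduction of the globally coupled projection $P_h$ to the decoupled averaging operator $P_0^x$: for a genuinely $x$-dependent coefficient $b$ the identity $P_h=P_0^x$ breaks down, and even the factorization $P_{h\Delta r}=P_{\Delta r}P_h$ would need re-examination, so one must lean on the piecewise-constant structure of $b$ available in the application. An alternative that avoids touching $P_h$ is the symmetric splitting $w-P_{h\Delta r}w=(w-P_{\Delta r}w)+(I-P_h)(P_{\Delta r}w)$, combined with the $H^1(\Omega;H^1_r)$-stability of $P_{\Delta r}$; but that route meets the same difficulty of commuting a fibrewise radial projection with $\partial/\partial x_i$.
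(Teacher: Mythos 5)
Your decomposition is the mirror image of the paper's, and the difference matters. You write $w-P_{h\Delta r}w=(w-P_hw)+(I-P_{\Delta r})(P_hw)$, which forces you to apply Lemma~\ref{lemma:DFN_projection_dr} to $P_hw$ and hence to prove the $H^2_r$-stability $\norm{P_hw}_{L^2(\Omega;H_r^2(0,R))}\le C\norm{w}_{L^2(\Omega;H_r^2(0,R))}$. Your argument for that stability, via the identification $P_h=P_0^x$, is sound but only under the extra hypothesis that $b$ is constant in $x$ on each element (only then does the $a$-orthogonality decouple elementwise and force the fibre of $P_hw$ on $e_l$ to be the plain average $|e_l|^{-1}\int_{e_l}w(y,\cdot)\,\dd y$). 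The theorem is stated in the generic subsection for arbitrary $b\in L^\infty$ with $0<\underline b\le b\le\bar b$, so your proof covers only a restricted version — adequate for the application to $c_2$ where $b=k_2$ is piecewise constant on a fitted mesh, but not the statement as written. The paper instead uses $w-P_{\Delta r}P_hw=(w-P_{\Delta r}w)+P_{\Delta r}\qty(I-P_h)w$: the first term is Lemma~\ref{lemma:DFN_projection_dr} applied to $w$ itself, and the second is bounded by $\norm{(I-P_h)w}_{L^2(\Omega;H_r^1(0,R))}$ using only that $P_{\Delta r}$ is an orthogonal projection in an inner product equivalent to the $L^2(\Omega;H^1_r)$ one, after which Lemma~\ref{lemma:DFN_projection_Ph} finishes. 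No higher-order stability of either projector is needed and no structure on $b$ beyond boundedness is used. Note also that your closing remark misdiagnoses this alternative: you wrote it as $(w-P_{\Delta r}w)+(I-P_h)(P_{\Delta r}w)$, which would indeed require commuting $P_{\Delta r}$ with $\partial/\partial x_i$ to control $\norm{P_{\Delta r}w}_{H^1(\Omega;H^1_r)}$; the paper's ordering, with $P_{\Delta r}$ on the outside acting on $(I-P_h)w$, sidesteps that difficulty entirely. So your argument is workable for the paper's purposes, but the paper's splitting is both simpler and strictly more general; I recommend adopting it.
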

    \begin{proof}
        By means of Lemmas~\ref{lemma:DFN_projection_dr} and \ref{lemma:DFN_projection_Ph}, it readily follows for $q = 0, 1$,  
        \begin{subequations}
            \begin{align}
                \norm{w-P_{h\Delta r} w}_{L^2\left(\Omega ; H_r^q\left(0, R\right)\right)}  
                = &\norm{w -  P_{\Delta r} P_h w}_{L^2\left(\Omega ; H_r^q\left(0, R\right)\right)} \nonumber \\
                 \le &\norm{w - P_{\Delta r} w}_{L^2\left(\Omega ; H_r^q\left(0, R\right)\right)} + \norm{P_{\Delta r} \qty(I - P_{h}) w}_{L^2\left(\Omega ; H_r^1\left(0, R\right)\right)} \nonumber\\
                 \le &\norm{w - P_{\Delta r} w}_{L^2\left(\Omega ; H_r^q\left(0, R\right)\right)} + \norm{w - P_{h} w}_{L^2\left(\Omega ; H_r^1\left(0, R\right)\right)} \nonumber\\
                 \le &C \qty( \qty(\Delta r)^{2-q} \norm{w}_ {L^2\left(\Omega ; H_r^2\left(0, R\right)\right)} + h \norm{w}_{H^1\left(\Omega ; H_r^1\left(0, R\right)\right)} ).\nonumber
            \end{align}
        \end{subequations}

    \end{proof}
    
    \begin{theorem}\label{lemma:DFN_projection_hdr_surf}
        If $w\in H^1\left(\Omega ; H_r^1\left(0, R\right)\right) \cap L^2\left(\Omega ; H_r^2\left(0, R\right)\right) $, then we have
        \begin{equation}\label{eq:DFN_projection_hdr_surf_error}
            \norm{\qty(w-P_{h\Delta r}w)\qty(\cdot,R)}_{L^2\left(\Omega\right)} \le C \qty(h\|w\|_{H^1\left(\Omega ; H_r^1\left(0, R\right)\right)} + \qty(\Delta r)^2 \|w\|_{L^2\left(\Omega ; H_r^2\left(0, R\right)\right)}).
        \end{equation}
    \end{theorem}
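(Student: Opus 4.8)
The plan is to split the radial trace error using the factorization $P_{h\Delta r}=P_{\Delta r}P_{h}$ and estimate each piece with a result already in hand, the only genuinely new ingredient being a \emph{uniform} (in $x$) radial trace bound for the intermediate $\Delta r$-projection. First I would write
\[
  w-P_{h\Delta r}w=(w-P_{\Delta r}w)+P_{\Delta r}\bigl((I-P_{h})w\bigr),
\]
and hence, taking the trace at $r=R$,
\[
  (w-P_{h\Delta r}w)(\cdot,R)=(w-P_{\Delta r}w)(\cdot,R)+\bigl(P_{\Delta r}(I-P_{h})w\bigr)(\cdot,R).
\]
For the first term, since $w\in L^{2}(\Omega;H_{r}^{2}(0,R))$ and $b$ is independent of $r$ (as it is for the application to $k_{2}$), Lemma~\ref{lemma:DFN_projection_dr_surf} applies directly and yields $\|(w-P_{\Delta r}w)(\cdot,R)\|_{L^{2}(\Omega)}\le C(\Delta r)^{2}\|w\|_{L^{2}(\Omega;H_{r}^{2}(0,R))}$.

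For the second term, set $e:=(I-P_{h})w\in L^{2}(\Omega;H_{r}^{1}(0,R))$, so that $P_{\Delta r}e(x,\cdot)\in H_{r}^{1}(0,R)$ for a.e.\ $x\in\Omega$. Applying Proposition~\ref{prop:DFN_radial_surface} pointwise in $x$ (with, say, $\epsilon=1$) gives $|P_{\Delta r}e(x,R)|\le C\|P_{\Delta r}e(x,\cdot)\|_{H_{r}^{1}(0,R)}$; squaring and integrating over $\Omega$ produces
\[
  \|(P_{\Delta r}e)(\cdot,R)\|_{L^{2}(\Omega)}\le C\|P_{\Delta r}e\|_{L^{2}(\Omega;H_{r}^{1}(0,R))}.
\]
Because $P_{\Delta r}$ is the orthogonal projection for the inner product $(u,v)\mapsto\int_{\Omega}\int_{0}^{R}\bigl(b\,\partial_{r}u\,\partial_{r}v+\lambda uv\bigr)r^{2}\,\dd r\,\dd x$, which is equivalent to the $L^{2}(\Omega;H_{r}^{1}(0,R))$ inner product thanks to $0<\underline{b}\le b\le\bar b$ and $\lambda>0$, it is bounded on $L^{2}(\Omega;H_{r}^{1}(0,R))$ with a constant depending only on $\underline b,\bar b,\lambda$. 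Thus $\|P_{\Delta r}e\|_{L^{2}(\Omega;H_{r}^{1}(0,R))}\le C\|e\|_{L^{2}(\Omega;H_{r}^{1}(0,R))}$, and Lemma~\ref{lemma:DFN_projection_Ph} finishes the estimate of this term via $\|e\|_{L^{2}(\Omega;H_{r}^{1}(0,R))}=\|(I-P_{h})w\|_{L^{2}(\Omega;H_{r}^{1}(0,R))}\le Ch\|w\|_{H^{1}(\Omega;H_{r}^{1}(0,R))}$. Adding the two contributions gives \eqref{eq:DFN_projection_hdr_surf_error}.

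The only (modest) obstacle is recognizing that one cannot apply Lemma~\ref{lemma:DFN_projection_dr_surf} to $w-P_{h\Delta r}w$ directly: $P_{h\Delta r}w=P_{\Delta r}(P_{h}w)$, and $P_{h}w$ need not have the $H_{r}^{2}$ regularity that lemma requires. The remedy is precisely the splitting above, handling the $\Delta r$-part of the error with the smooth-enough $w$, and controlling the trace of the remaining $P_{\Delta r}$-projected error by combining the uniform trace inequality of Proposition~\ref{prop:DFN_radial_surface} with the $L^{2}(\Omega;H_{r}^{1}(0,R))$-stability of $P_{\Delta r}$ and the first-order $h$-estimate for $P_{h}$ from Lemma~\ref{lemma:DFN_projection_Ph}.
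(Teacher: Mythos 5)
Your proposal is correct and follows essentially the same route as the paper: the same splitting $w-P_{h\Delta r}w=(w-P_{\Delta r}w)+P_{\Delta r}(I-P_h)w$, with Lemma~\ref{lemma:DFN_projection_dr_surf} for the first piece, and Proposition~\ref{prop:DFN_radial_surface} plus the $L^2(\Omega;H^1_r(0,R))$-stability of $P_{\Delta r}$ and Lemma~\ref{lemma:DFN_projection_Ph} for the second. Your remark on why Lemma~\ref{lemma:DFN_projection_dr_surf} cannot be applied to $P_h w$ directly (lack of $H^2_r$ regularity) correctly identifies the reason the decomposition is needed.
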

    \begin{proof}
        By means of Lemmas~\ref{lemma:DFN_projection_dr_surf} and \ref{lemma:DFN_projection_hdr}, it readily follows 
        \begin{equation*}
            \begin{aligned}
                \norm{\qty(w-P_{h\Delta r}w)\qty(\cdot,R)}_{L^2\left(\Omega\right)} 
                \le & \norm{\qty(w-P_{\Delta r}w)\qty(\cdot,R)}_{L^2\left(\Omega\right)} + \norm{\qty(P_{\Delta r}\qty(I-P_{h})w)\qty(\cdot,R)}_{L^2\left(\Omega\right)} \\ 
                \le & C \qty(\Delta r)^2 \norm{w}_{L^2\left(\Omega; H^2_r\qty(0,R)\right)} + \norm{P_{\Delta r}\qty(w-P_{h}w)}_{L^2\left(\Omega; H^1_r\qty(0,R)\right)} \\ 
                \le & C \qty(\Delta r)^2 \norm{w}_{L^2\left(\Omega; H^2_r\qty(0,R)\right)} + \norm{w-P_{h}w}_{L^2\left(\Omega; H^1_r\qty(0,R)\right)} \\ 
                \le & C \qty(h\|w\|_{H^1\left(\Omega ; H_r^1\left(0, R\right)\right)} + \qty(\Delta r)^2 \|w\|_{L^2\left(\Omega ; H_r^2\left(0, R\right)\right)}).
            \end{aligned}
        \end{equation*} 
        The second inequality is due to Proposition~\ref{prop:DFN_radial_surface}.
    \end{proof}

  \subsection{Error estimation for $c_2$}
    Let $P_{h\Delta r,m}$, $m\in \set{\mathrm{n},\mathrm{p}}$, denote the operator defined by \eqref{eq:DFN_c2_projection_hr} with $ b = k_2$, $\lambda = 1$, $\Omega = \Omega_{m}$ and $R=R_m$. Then, we define
    \begin{equation*}
        P_{h\Delta r} v (x,r):=\begin{cases}
        P_{h\Delta r,\mathrm{n}} v (x,r)& \text{if $(x,r) \in \Omega_\mathrm{n}\times\qty(0,R_\mathrm{n})$},\\
        P_{h\Delta r,\mathrm{p}} v (x,r)& \text{if $(x,r) \in \Omega_\mathrm{p}\times\qty(0,R_\mathrm{p})$},
        \end{cases}
    \end{equation*}
    for $v\in L^2\left(\Omega_2 ; H_r^1\left(0, R_\mathrm{s}(\cdot)\right)\right)$.
    Thanks to the newly introduced projection operator $P_{h\Delta r}$, the finite element approximation error can be decomposed in a standard way as follows:
    \begin{equation}\label{eq:DFN_fem_c2_err_decomp}
        c_2-c_{2h\Delta r}=\left(c_2-P_{h\Delta r} c_2\right)+\left(P_{h\Delta r} c_2 -c_{2h \Delta r}\right)=:\rho_{c_2} +\theta_{c_2}.
    \end{equation}
    The estimate for $\rho_{c_2}$ has been established; we now proceed to provide an estimate for $\theta_{c_2}$.

    \begin{lemma}\label{lemma:DFN_fem_estimation_theta_c2}
        
      Assuming that $c_2 \in H^1\left(0, T ; H^1\left(\Omega_2 ; H_r^1(0, R_\mathrm{s}\qty(\cdot))\right) \cap L^2\left(\Omega_2 ; H_r^2\qty(0, R_\mathrm{s}\qty(\cdot))\right)\right)$ and that Assumptions~\ref{asp:nonlinear_function} and \ref{asp:DFN_prior_bound}-\ref{asp:DFN_fem_prior_bound} hold, there exist an arbitrarily small number $\epsilon > 0$ and a positive constant $C\qty(\epsilon) $ that do not depend on $h$ and $\Delta r$, such that for $t \,\,\mathrm{ a.e.} \,\,   \mathrm{in }\,\, \qty[0,T]$, 
      \begin{multline}\label{eq:DFN_fem_estimation_theta_c2}
        \frac{d}{d t}  \left\|\theta_{c_2}(t)\right\|_{L^2\left(\Omega_2; L_r^2\left(0, R_\mathrm{s}(\cdot)\right)\right)}^2+\underline{k_2}\left\|\frac{\partial \theta_{c_2}}{\partial r}(t)\right\|_{L^2\left(\Omega_2; L_r^2\left(0, R_\mathrm{s}(\cdot)\right)\right)}^2 \\
        \begin{aligned}
          \leq &\epsilon\left(\norm{\qty(c_1 - c_{1h})(t)}_{L^2\qty(\Omega)}^2 + \norm{\qty(\phi_1-\phi_{1h})(t)}_{L^2\qty(\Omega)}^2 + \norm{\qty(\phi_2-\phi_{2h})(t)}^2_{L^2\qty(\Omega_2)}\right) \\
          &+ C\qty(\epsilon) \qty(\Delta r)^4\left(\|c_2(t)\|_{L^2\left(\Omega_2; H_r^2\left(0, R_\mathrm{s}(\cdot)\right)\right)}^2+\left\|\frac{\partial c_2(t)}{\partial t}\right\|_{L^2\left(\Omega_2; H_r^2\left(0, R_\mathrm{s}(\cdot)\right)\right)}^2\right) \\ 
          & +C\qty(\epsilon) h^2\left(\|c_2(t)\|_{H^1\left(\Omega_2; H_r^1\left(0, R_\mathrm{s}(\cdot)\right)\right)}^2+\left\|\frac{\partial c_2(t)}{\partial t}\right\|_{H^1\left(\Omega_2; H_r^1\left(0, R_\mathrm{s}(\cdot)\right)\right)}^2\right) \\ 
          & +C\qty(\epsilon)\left\|\theta_{c_2}(t)\right\|^2_{L^2\left(\Omega_2; L_r^2\left(0, R_\mathrm{s}(\cdot)\right)\right)} .
        \end{aligned}
      \end{multline}
    \end{lemma}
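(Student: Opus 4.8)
The plan is to derive an energy identity for $\theta_{c_2}$ by subtracting the semi-discrete equation \eqref{eq:DFN_fem_c2} from the weak equation \eqref{eq:DFN_weak_c2}, then inserting the projection $P_{h\Delta r}c_2$. First I would write the error equation: for every $v_{h\Delta r}\in V_{h\Delta r}(\bar\Omega_{2r})$,
\begin{multline*}
\int_{\Omega_2}\!\int_0^{R_\mathrm{s}(x)}\pdv{\theta_{c_2}}{t}\,v_{h\Delta r}\,r^2\,\dd r\dd x
+\int_{\Omega_2}\!\int_0^{R_\mathrm{s}(x)} k_2\,\pdv{\theta_{c_2}}{r}\pdv{v_{h\Delta r}}{r}\,r^2\,\dd r\dd x\\
=-\int_{\Omega_2}\!\int_0^{R_\mathrm{s}(x)}\pdv{\rho_{c_2}}{t}\,v_{h\Delta r}\,r^2\,\dd r\dd x
-\int_{\Omega_2}\!\int_0^{R_\mathrm{s}(x)} k_2\,\pdv{\rho_{c_2}}{r}\pdv{v_{h\Delta r}}{r}\,r^2\,\dd r\dd x
-\int_{\Omega_2}\frac{R_\mathrm{s}^2(x)}{F}\bigl(J(x,t)-J_h(x,t)\bigr)v_{h\Delta r}(x,R_\mathrm{s}(x))\,\dd x.
\end{multline*}
The key observation is that, by the defining property \eqref{eq:DFN_c2_projection_hr} of $P_{h\Delta r}$ with $b=k_2$ and $\lambda=1$, the gradient term $\int k_2\,\pdv{\rho_{c_2}}{r}\pdv{v_{h\Delta r}}{r}\,r^2$ equals $\int\rho_{c_2}v_{h\Delta r}\,r^2$; this is precisely why the novel projection was engineered, and it converts an $H^1_r$-order error into an $L^2_r$-order error. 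Then I take $v_{h\Delta r}=\theta_{c_2}(t)$ (which lies in the finite element space).

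Next I would bound each resulting term. The left side gives $\tfrac12\dv{}{t}\|\theta_{c_2}\|_{L^2(\Omega_2;L^2_r)}^2+\int k_2|\pdv{\theta_{c_2}}{r}|^2 r^2\ge\tfrac12\dv{}{t}\|\theta_{c_2}\|^2+\underline{k_2}\|\pdv{\theta_{c_2}}{r}\|^2$. On the right: the $\pdv{\rho_{c_2}}{t}$ term is handled by Cauchy–Schwarz and Young's inequality, producing $C\|\theta_{c_2}\|^2+C\|\pdv{\rho_{c_2}}{t}\|_{L^2(\Omega_2;L^2_r)}^2$; since $P_{h\Delta r}$ commutes with $\pdv{}{t}$, $\pdv{\rho_{c_2}}{t}=(I-P_{h\Delta r})\pdv{c_2}{t}$, and Theorem~\ref{lemma:DFN_projection_hdr} with $q=0$ bounds this by $C(h^2\|\pdv{c_2}{t}\|_{H^1(\Omega_2;H^1_r)}^2+(\Delta r)^4\|\pdv{c_2}{t}\|_{L^2(\Omega_2;H^2_r)}^2)$. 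The transformed gradient term $\int\rho_{c_2}\theta_{c_2}\,r^2$ is likewise absorbed into $C\|\theta_{c_2}\|^2+C\|\rho_{c_2}\|_{L^2(\Omega_2;L^2_r)}^2$, with $\|\rho_{c_2}\|$ controlled by Theorem~\ref{lemma:DFN_projection_hdr} ($q=0$).

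The most delicate term is the source term involving $J-J_h$ evaluated at the surface $r=R_\mathrm{s}(x)$. Here I would split $J-J_h$ using the local Lipschitz continuity of $J_m$ (from Assumption~\ref{asp:nonlinear_function}, together with the uniform bounds of Assumptions~\ref{asp:DFN_prior_bound}–\ref{asp:DFN_fem_prior_bound} which confine the arguments to a compact set on which $\partial_{c_1}J_m$, $\partial_{\bar c_2}J_m$, $\partial_\eta J_m$ are bounded): $|J-J_h|\le C(|c_1-c_{1h}|+|\bar c_2-\bar c_{2h}|+|\phi_1-\phi_{1h}|+|\phi_2-\phi_{2h}|+|U-U_h|)$, and $U-U_h$ is itself controlled by $|c_1-c_{1h}|+|\bar c_2-\bar c_{2h}|$ via the $C^2$ (hence locally Lipschitz) property of $U_m$. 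The trace $\bar c_2-\bar c_{2h}=(c_2-c_{2h\Delta r})(\cdot,R_\mathrm{s})=\rho_{c_2}(\cdot,R_\mathrm{s})+\theta_{c_2}(\cdot,R_\mathrm{s})$; the $\rho$-part is bounded by Theorem~\ref{lemma:DFN_projection_hdr_surf} giving $h$ and $(\Delta r)^2$ orders, while the $\theta$-part is the obstacle: $\theta_{c_2}(\cdot,R_\mathrm{s})$ must be controlled by the interior norms of $\theta_{c_2}$ we actually command. For this I invoke Proposition~\ref{prop:DFN_radial_surface} fibrewise in $x$, getting $\|\theta_{c_2}(\cdot,R_\mathrm{s})\|_{L^2(\Omega_2)}\le\epsilon\|\pdv{\theta_{c_2}}{r}\|_{L^2(\Omega_2;L^2_r)}+C(\epsilon)\|\theta_{c_2}\|_{L^2(\Omega_2;L^2_r)}$ — the $\epsilon\|\pdv{\theta_{c_2}}{r}\|^2$ contribution can be made smaller than $\underline{k_2}$ and absorbed into the left-hand side, while $C(\epsilon)\|\theta_{c_2}\|^2$ joins the Grönwall term. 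Collecting everything, moving the absorbed gradient piece to the left, and relabelling constants yields exactly \eqref{eq:DFN_fem_estimation_theta_c2}. The main difficulty is this surface-trace estimate for $\theta_{c_2}$, because naively it would cost a full power of $\pdv{\theta_{c_2}}{r}$ that one cannot afford; the resolution is the $\epsilon$-weighted trace inequality of Proposition~\ref{prop:DFN_radial_surface}, applied inside the $x$-integral, which is exactly why the lemma is stated with the free parameter $\epsilon$ on the left-hand side variables and a blow-up constant $C(\epsilon)$.
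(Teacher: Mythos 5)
Your proposal follows essentially the same route as the paper's proof: the same error equation obtained by subtracting the discrete from the weak formulation, the same use of the defining property of $P_{h\Delta r}$ (with $b=k_2$, $\lambda=1$) to trade the $\partial_r\rho_{c_2}$ term for an $L^2_r$-order term, the same Lipschitz bound on $J-J_h$, the splitting of $\bar c_2-\bar c_{2h}$ into $\rho_{c_2}(\cdot,R_\mathrm{s})+\theta_{c_2}(\cdot,R_\mathrm{s})$ handled by Theorem~\ref{lemma:DFN_projection_hdr_surf} and Proposition~\ref{prop:DFN_radial_surface} respectively, and the same absorption of the $\tilde\epsilon\,\|\partial_r\theta_{c_2}\|^2$ contribution into the left-hand side. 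The only blemish is a sign slip (the projection identity gives $\int k_2\,\partial_r\rho_{c_2}\,\partial_r v_{h\Delta r}\,r^2 = -\int\rho_{c_2}v_{h\Delta r}\,r^2$), which is immaterial since the term is then bounded in absolute value.
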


    \begin{proof}
      Note that $c_{2 h \Delta r}(t) = c_{2}(t) - \rho_{c_2}(t) - \theta_{c_2}(t)$. Substitute it into \eqref{eq:DFN_fem_c2} and by means of \eqref{eq:DFN_weak_c2} and \eqref{eq:DFN_c2_projection_hr}, we have
      \begin{equation*}
          \begin{aligned}
          & \int_{\Omega_2} \int_0^{R_\mathrm{s}(x)} \dv{ \theta_{c_2}}{ t} \qty(t) v_{h \Delta r} r^2 \dd r \dd x+\int_{\Omega_2} \int_0^{R_\mathrm{s}(x)} k_{2} \frac{\partial \theta_{c_2}(t)}{\partial r} \frac{\partial v_{ h \Delta r}}{\partial r} r^2 \dd r \dd x \\
          =&\int_{\Omega_2} \int_0^{R_\mathrm{s}(x)}  \rho_{c_2}\qty(t) v_{h \Delta r} r^2 \dd r \dd x -\int_{\Omega_2} \int_0^{R_\mathrm{s}(x)} \dv{\rho_{c_2}}{t}\qty(t) v_{h \Delta r} r^2 \dd r \dd x \\
          &  + \int_{\Omega_2} \frac{R_\mathrm{s}^2(x)}{F}\left(J_h(t,x)-J(t,x)\right) v_{h \Delta r}\left(x, R_\mathrm{s}(x)\right) \dd x.
          \end{aligned}
      \end{equation*}

      From now on, we omit $\qty(t)$ for brevity. Taking $v_{h \Delta r}=\theta_{c_2}\in V_{h\Delta r}(\bar \Omega_{2r})$, H\"older's inequality yields
      \begin{multline}\label{eq:DFN_theta2_estimation_raw}
          \frac{1}{2} \frac{d}{d t}\left\|\theta_{c_2}\right\|_{L^2\left(\Omega_2 ; L_r^2\left(0, R_\mathrm{s}(\cdot)\right)\right)}^2+\underline{k_2}\left\|\frac{\partial \theta_{c_2}}{\partial r}\right\|_{L^2\left(\Omega_2 ; L_r^2\left(0, R_\mathrm{s}(\cdot)\right)\right)}^2  
          \leq  C\left\|J_h-J\right\|_{L^2\left(\Omega_2 \right)} \| \theta_{c_2}\qty(\cdot, R_\mathrm{s}(\cdot)) \|_{L^2\left(\Omega_2 \right)} \\
               +\left\|\frac{\partial \rho_{c_2}}{\partial t}\right\|_{L^2\left(\Omega_2 ; L_r^2\left(0, R_\mathrm{s}(\cdot)\right)\right)}\left\|\theta_{c_2}\right\|_{L^2\left(\Omega_2 ; L_r^2\left(0, R_\mathrm{s}(\cdot)\right)\right)} 
               +\left\|\rho_{c_2}\right\|_{L^2\left(\Omega_2 ; L_r^2\left(0, R_\mathrm{s}(\cdot)\right)\right)}\left\|\theta_{c_2}\right\|_{L^2\left(\Omega_2 ; L_r^2\left(0, R_\mathrm{s}(\cdot)\right)\right)} .
      \end{multline}
      Since Assumptions \ref{asp:nonlinear_function}, \ref{asp:DFN_prior_bound} and \ref{asp:DFN_fem_prior_bound}  are satisfied, the Lipschitz continuity (cf. \citet[Lem. 6]{bermejo_numerical_2021} and \citet[Lem. 4.1]{kroener_mathematical_2016}) implies that
      \begin{equation} \label{eq:DFN_estimation_J}
      \norm{J-J_h}_{L^2\qty(\Omega_2)} \le C \left(\norm{c_1-c_{1h}}_{L^2\qty(\Omega)} + \norm{\bar{c}_2-\bar{c}_{2h}}_{L^2\qty(\Omega_2)} + \norm{\phi_1-\phi_{1h}}_{L^2\qty(\Omega)} + \norm{\phi_2-\phi_{2h}} _{L^2\qty(\Omega_2)} \right).
      \end{equation}
      Notice that from \eqref{eq:DFN_fem_c2_err_decomp}, 
      \begin{equation}\label{eq:DFN_fem_c2_surf_triangle}
        \norm{\bar{c}_2-\bar{c}_{2h}}_{L^2\qty(\Omega_2)}  \le \norm{\rho_2(\cdot,R_\mathrm{s}(\cdot))}_{L^2\qty(\Omega_2)} + \left\|\theta_{c_2}\left(\cdot, R_\mathrm{s}(\cdot)\right)\right\|_{L^2\left(\Omega_2\right)}.
      \end{equation} 
      According to Lemma~\ref{lemma:DFN_projection_hdr_surf},
      \begin{equation}\label{eq:DFN_fem_theta2_surf_L2}
          \norm{\rho_2(\cdot,R_\mathrm{s}(\cdot))}_{L^2\qty(\Omega_2)} \le C \qty(h\|c_2\|_{H^1\left(\Omega_2 ; H_r^1\left(0, R_\mathrm{s}(\cdot)\right)\right)} + \qty(\Delta r)^2 \|c_2\|_{L^2\left(\Omega_2 ; H_r^2\left(0, R_\mathrm{s}(\cdot)\right)\right)}),
      \end{equation}
      while by Proposition~\ref{prop:DFN_radial_surface}, there exists an arbitrarily small $\tilde{\epsilon} > 0$, such that
      \begin{equation}\label{eq:DFN_theta2_surface_estimation} 
          \left\|\theta_{c_2}\left(\cdot, R_\mathrm{s}(\cdot)\right)\right\|_{L^2\left(\Omega_2\right)} \leq \tilde  \epsilon\left\|\frac{\partial \theta_{c_2}}{\partial r}\right\|_{L^2\left(\Omega_2 ; L_r^2\left(0, R_\mathrm{s}(\cdot)\right)\right)}
          +C(\tilde \epsilon)\left\|\theta_{c_2}\right\|_{L^2\left(\Omega_2 ; L_r^2\left(0, R_\mathrm{s}(\cdot)\right)\right)}.
      \end{equation}
      Then it follows
      \begin{multline}\label{eq:DFN_c2_surface_estimation} 
          \norm{\bar{c}_2-\bar{c}_{2h}}_{L^2\qty(\Omega_2)}  \le C \qty(h\|c_2\|_{H^1\left(\Omega_2 ; H_r^1\left(0, R_\mathrm{s}(\cdot)\right)\right)} + \qty(\Delta r)^2 \|c_2\|_{L^2\left(\Omega ; H_r^2\left(0, R_\mathrm{s}(\cdot)\right)\right)})  \\ +   \qty(\tilde \epsilon\norm{\pdv{\theta_{c_2}}{r}}_{L^2\qty(\Omega_2;L_r^2\qty(0,R_\mathrm{s}(\cdot)))} +  C(\tilde \epsilon)\norm{\theta_{c_2}}_{L^2\qty(\Omega_2;L_r^2\qty(0,R_\mathrm{s}(\cdot)))}).
      \end{multline}
      Young's inequality, together with \eqref{eq:DFN_estimation_J} and \eqref{eq:DFN_fem_c2_surf_triangle}, yields that for arbitrarily small number $\epsilon > 0$, 
      \begin{multline}
        C\left\|J_h-J\right\|_{L^2\left(\Omega_2 \right)} \| \theta_{c_2}\qty(\cdot, R_\mathrm{s}(\cdot)) \|_{L^2\left(\Omega_2 \right)} 
        \le  C\qty(\epsilon)\left\|\theta_{c_2}\left(\cdot, R_\mathrm{s}(\cdot)\right)\right\|_{L^2\left(\Omega_2\right)}^2  + \epsilon\norm{\rho_2(\cdot,R_\mathrm{s}(\cdot))}^2_{L^2\qty(\Omega_2)} \\
        +  \epsilon\left(\norm{\qty(c_1 - c_{1h})(t)}_{L^2\qty(\Omega)}^2+ \norm{\qty(\phi_1-\phi_{1h})(t)}_{L^2\qty(\Omega)}^2 + \norm{\qty(\phi_2-\phi_{2h})(t)}^2_{L^2\qty(\Omega_2)}\right). 
      \end{multline}
      We can further select $\tilde \epsilon$ sufficiently small in \eqref{eq:DFN_theta2_surface_estimation}, such that   
      \begin{equation*}
        C\qty(\epsilon)\left\|\theta_{c_2}\left(\cdot, R_\mathrm{s}(\cdot)\right)\right\|_{L^2\left(\Omega_2\right)}^2
          \le \frac{\underline{k_2}}{2}  \left\|\frac{\partial \theta_{c_2}}{\partial r}\right\|_{L^2\left(\Omega_2; L_r^2\left(0, R_\mathrm{s}(\cdot)\right)\right)}^2 
          +  C\qty(\epsilon)\left\|\theta_{c_2}\right\|^2_{L^2\left(\Omega_2; L_r^2\left(0, R_\mathrm{s}(\cdot)\right)\right)}.
      \end{equation*}
      By applying the Cauchy-Schwarz inequality to the remaining terms on the right-hand side of \eqref{eq:DFN_theta2_estimation_raw}, and using Theorem~\ref{lemma:DFN_projection_hdr} together with \eqref{eq:DFN_fem_theta2_surf_L2}, we derive the final result.
    \end{proof}

  \subsection{Error estimation for the semi-discrete problems}
    To this end, we still require error estimates for $\phi_1$, $\phi_2$ and ${c_1}$, which will then be combined to address the fully coupled problems.

    We first define the projection operator $P_{h}^3$ from $H^1(\Omega)$ to $V_h^{(1)}(\bar \Omega)$ such that $\forall u \in H^1(\Omega)$,
    \begin{equation}\label{eq:DFN_projection_c1}
        \int_{\Omega} k_1 \nabla\left(u-P_{h}^3 u\right) \cdot \nabla \varphi_h \, \dd x+\int_{\Omega}  \left(u-P_{h}^3 u\right) \varphi_h \, \dd x =0, \quad \forall \varphi_h \in V_h^{(1)}(\Omega).
    \end{equation}
    For $u \in H_\mathrm{pw}^2\qty(\Omega_1)$, it holds $\norm{u-P_{h}^3 u}_{H^1\qty(\Omega_1)}  \le Ch\norm{u}_{H_\mathrm{pw}^2\qty(\Omega_1)}$ \citep{xu1982estimate}.
    Then we decompose the finite element approximation error as
    \begin{equation}\label{app_eq:DFN_fem_c1_err_decomp}
        c_1\qty(t)-c_{1h}\qty(t)=\left(c_1\qty(t)-P_{h}^3 c_1\qty(t)\right)+\left(P_{h}^3 c_1\qty(t)-c_{1h}\qty(t)\right)=:\rho_{c_1}\qty(t) +\theta_{c_1}\qty(t),
    \end{equation}
    and it readily follows that
    \begin{gather}
        \label{eq:DFN_fem_c1_projection_error}
            \norm{\rho_{c_1}(t)}_{H^1\qty(\Omega)}  \le Ch\norm{c_1(t)}_{H_\mathrm{pw}^2\qty(\Omega_1)}, \\ 
        \label{eq:DFN_fem_c1t_projection_error}
            \norm{\dv{\rho_{c_1}}{t}\qty(t)}_{H^1\qty(\Omega)} \le Ch\norm{\dv{c_1}{t} \qty(t)}_{H_\mathrm{pw}^2\qty(\Omega_1)}.
    \end{gather}

    The remaining error estimations for $\phi_1$, $\phi_2$ and $\theta_{c_1}$ follow standard procedures.  Here, we state the results without proof, which can be found in detail in Sections~\ref{app_sec:DFN_err_phi} and \ref{app_sec:DFN_err_theta_c1}. Since $\varepsilon_1$ is a piecewise constant positive function and independent of $t$, the norms $\norm{\varepsilon_1^{1/2}u}_{L^2\qty(\Omega)}$ and $\norm{u}_{L^2\qty(\Omega)}$ are equivalent. Henceforth, we assume $\varepsilon_1 \equiv 1$.
    \begin{lemma}\label{lemma:DFN_fem_estimation_Phi}
        Assume that $\phi_1\in L^2\qty(0,T;H^2_\mathrm{pw}(\Omega_1)) $, $\phi_2\in  L^2\qty(0,T;H^2_\mathrm{pw}(\Omega_2)) $ and that Assumptions~\ref{asp:nonlinear_function} and \ref{asp:DFN_prior_bound}-\ref{asp:DFN_fem_prior_bound} hold. There is a constant $C$ that does not depend on $h$ and $\Delta r$, such that for $t\in \qty[0,T]$ a.e.,
        \begin{multline}\label{eq:DFN_fem_estimation_Phi}
            \norm{\phi_1(t)-\phi_{1h}(t)}_{H^1\qty(\Omega)}^2 + \norm{\phi_2(t)-\phi_{2h}(t)}^2_{H^1\qty(\Omega_2)}  \\ \le Ch^2\qty(\norm{\phi_1(t)}_{H^2_\mathrm{pw}(\Omega_1)}^2 + \norm{\phi_2(t)}_{H^2_\mathrm{pw}(\Omega_2)}^2)   
            + C\qty( \norm{c_1(t) - c_{1h}(t)}_{H^1(\Omega)}^2 + \norm{\bar c_2(t) - \bar c_{2h}(t)}_{L^2(\Omega_2)}^2 ).
        \end{multline}
    \end{lemma}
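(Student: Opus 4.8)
The idea is to regard \eqref{eq:DFN_weak_phi1}--\eqref{eq:DFN_weak_phi2} and \eqref{eq:DFN_fem_phi1}--\eqref{eq:DFN_fem_phi2} as a coupled pair of nonlinear elliptic problems at each fixed time $t$ and to run a Ritz-projection energy argument for the two equations \emph{jointly}. First I would introduce elliptic projections $P_h^1\colon H^1_*(\Omega)\to W_h(\bar\Omega)$ defined by $\int_\Omega\kappa_1(c_1(t))\,\nabla(u-P_h^1u)\cdot\nabla w_h\,\dd x=0$ for all $w_h\in W_h(\bar\Omega)$ (coercive on $H^1_*$ by Poincar\'e--Wirtinger, since $\kappa_1(c_1)$ is bounded above and below by positive constants on the range $[1/M,M]$ granted by Assumption~\ref{asp:DFN_prior_bound}) and $P_h^2\colon H^1(\Omega_2)\to V_h^{(1)}(\bar\Omega_2)$ defined exactly as $P_h^3$ in \eqref{eq:DFN_projection_c1} but with $k_1$ replaced by $\sigma$ and $\Omega$ by $\Omega_2$. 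Writing $\rho_{\phi_i}:=\phi_i-P_h^i\phi_i$, $\theta_{\phi_i}:=P_h^i\phi_i-\phi_{ih}$, so $\phi_i-\phi_{ih}=\rho_{\phi_i}+\theta_{\phi_i}$, the C\'ea lemma together with a mean-corrected quasi-interpolant, the piecewise $H^2$-regularity of Assumption~\ref{asp:DFN_fem_regularity}, the affine-family and interface-matching structure of the mesh, and the estimate of \citet{xu1982estimate} give $\norm{\rho_{\phi_i}(t)}_{H^1}\le Ch\norm{\phi_i(t)}_{H^2_\mathrm{pw}}$.

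Subtracting \eqref{eq:DFN_fem_phi1} from \eqref{eq:DFN_weak_phi1} and \eqref{eq:DFN_fem_phi2} from \eqref{eq:DFN_weak_phi2} (with $\varphi=w_h$, $\varphi=v_{2h}$; the $\int_\Gamma I$ contributions cancel), splitting $\kappa_1(c_1)\nabla\phi_1-\kappa_1(c_{1h})\nabla\phi_{1h}=\kappa_1(c_1)\nabla(\phi_1-\phi_{1h})+(\kappa_1(c_1)-\kappa_1(c_{1h}))\nabla\phi_{1h}$, and using the projection identities, the error equations become, for $w_h\in W_h(\bar\Omega)$, $v_{2h}\in V_h^{(1)}(\bar\Omega_2)$,
\begin{align*}
\int_\Omega\kappa_1(c_1)\nabla\theta_{\phi_1}\cdot\nabla w_h\,\dd x &= \int_{\Omega_2}a_2(J-J_h)\,w_h\,\dd x+\mathcal R_1(w_h),\\
\int_{\Omega_2}\sigma\,\nabla\theta_{\phi_2}\cdot\nabla v_{2h}\,\dd x &= -\int_{\Omega_2}a_2(J-J_h)\,v_{2h}\,\dd x+(\rho_{\phi_2},v_{2h})_{L^2(\Omega_2)},
\end{align*}
where $\mathcal R_1(w_h)=\int_\Omega\big((\kappa_2(c_1)\nabla f(c_1)-\kappa_2(c_{1h})\nabla f(c_{1h}))-(\kappa_1(c_1)-\kappa_1(c_{1h}))\nabla\phi_{1h}\big)\cdot\nabla w_h\,\dd x$. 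Using the local Lipschitz continuity of $\kappa_{1m},\kappa_{2m},f',U_m,J_m$ on the compact ranges pinned down by Assumptions~\ref{asp:DFN_prior_bound}--\ref{asp:DFN_fem_prior_bound} (for $f'$ this requires a mild strengthening of Assumption~\ref{asp:nonlinear_function}, satisfied by the DFN choice $f=\ln$), the uniform bounds on $\nabla\phi_{1h}$ and $\nabla c_1$, and the further split $\kappa_2(c_{1h})f'(c_{1h})\nabla(c_1-c_{1h})$, one gets $|\mathcal R_1(w_h)|\le C\norm{c_1-c_{1h}}_{H^1(\Omega)}\norm{\nabla w_h}_{L^2(\Omega)}$; this is exactly why the $H^1$-norm of $c_1-c_{1h}$, not merely its $L^2$-norm, appears on the right of \eqref{eq:DFN_fem_estimation_Phi}.

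Next I would take $w_h=\theta_{\phi_1}$, $v_{2h}=\theta_{\phi_2}$ and add. The first left-hand side is $\ge c_*\norm{\nabla\theta_{\phi_1}}_{L^2}^2\ge c\norm{\theta_{\phi_1}}_{H^1(\Omega)}^2$ by Poincar\'e--Wirtinger on $W_h$; the second is $\ge\underline\sigma\norm{\nabla\theta_{\phi_2}}_{L^2}^2$, which controls $\theta_{\phi_2}$ in $H^1(\Omega_2)$ only up to its mean. The two coupling terms combine into $\int_{\Omega_2}a_2(J-J_h)(\theta_{\phi_1}-\theta_{\phi_2})\,\dd x$; substituting $\theta_{\phi_1}-\theta_{\phi_2}=-(\eta-\eta_h)-(U-U_h)-\rho_{\phi_1}+\rho_{\phi_2}$ and the mean-value representation $J-J_h=\partial_\eta J_m(\cdot,\cdot,\widetilde\eta)(\eta-\eta_h)+O(|c_1-c_{1h}|+|\bar c_2-\bar c_{2h}|)$ with $\partial_\eta J_m\ge C_0$ (Assumption~\ref{asp:nonlinear_function}), the leading piece satisfies $-\int_{\Omega_2}a_2(J-J_h)(\eta-\eta_h)\,\dd x\le-\tfrac12\underline{a_2}C_0\norm{\eta-\eta_h}_{L^2(\Omega_2)}^2+C\big(\norm{c_1-c_{1h}}_{L^2(\Omega)}^2+\norm{\bar c_2-\bar c_{2h}}_{L^2(\Omega_2)}^2\big)$. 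The negative term $-\tfrac12\underline{a_2}C_0\norm{\eta-\eta_h}^2$ is moved to the left; combined with $\norm{\nabla\theta_{\phi_2}}^2$ and $\norm{\theta_{\phi_1}}^2$ it recovers the missing mean of $\theta_{\phi_2}$ (because $\eta-\eta_h$ and $\theta_{\phi_2}-\theta_{\phi_1}$ differ only by the already-estimated quantity $\rho_{\phi_2}-\rho_{\phi_1}-(U-U_h)$). Every remaining right-hand term — the residual part of $\int a_2(J-J_h)(-(U-U_h)-\rho_{\phi_1}+\rho_{\phi_2})$ estimated via \eqref{eq:DFN_estimation_J} and the Lipschitz bounds on $U$ and $J$, the term $\mathcal R_1(\theta_{\phi_1})$, and $(\rho_{\phi_2},\theta_{\phi_2})_{L^2}$ — is either a product of two factors each bounded by $Ch\norm{\phi_i}_{H^2_\mathrm{pw}}+C(\norm{c_1-c_{1h}}_{H^1}+\norm{\bar c_2-\bar c_{2h}}_{L^2})$, hence admissible by AM--GM, or a product of $\norm{\theta_{\phi_i}}_{L^2}$ with such a factor, to which Young's inequality with an arbitrarily small $\epsilon$ applies, the resulting $\epsilon\norm{\theta_{\phi_i}}_{H^1}^2$ being absorbed into the now-coercive left-hand side. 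Collecting what survives and using $\norm{\phi_i-\phi_{ih}}_{H^1}^2\le2\norm{\rho_{\phi_i}}_{H^1}^2+2\norm{\theta_{\phi_i}}_{H^1}^2$ yields \eqref{eq:DFN_fem_estimation_Phi}.

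The main obstacle — and the reason the two equations must be treated together — is the seemingly circular dependence in \eqref{eq:DFN_estimation_J}: $\norm{J-J_h}_{L^2}$ is controlled only by the errors $\norm{\phi_1-\phi_{1h}}$, $\norm{\phi_2-\phi_{2h}}$ themselves, with an $O(1)$ (possibly large) Lipschitz constant that coercivity cannot absorb, and, unlike the parabolic $c_2$-equation, there is no time integration available for a Gr\"onwall bound. The resolution is purely structural: pairing $J-J_h$ with the \emph{difference} $\theta_{\phi_1}-\theta_{\phi_2}$ (which dictates the opposite-sign test function on the $\phi_2$-equation) reproduces, modulo harmless projection and $U$-mismatch terms, the quantity $-\int a_2(J-J_h)(\eta-\eta_h)$, which strict monotonicity renders nonpositive and which simultaneously supplies, for $\phi_2$, the coercivity in the direction of constants that $\int_{\Omega_2}\sigma\,\nabla v_{2h}\cdot\nabla v_{2h}$ by itself lacks. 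A secondary technical point is the nonlinear coupling term $\kappa_2\nabla f(c_1)$, whose mismatch brings the stronger $H^1$-norm of $c_1-c_{1h}$ onto the right-hand side and whose control needs local Lipschitz continuity of $f'$.
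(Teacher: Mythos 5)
Your proposal is correct and follows essentially the same route as the paper's proof: Ritz-type elliptic projections for $\phi_1$ and $\phi_2$, a joint energy argument testing with $(\theta_{\phi_1},\theta_{\phi_2})$, the strict monotonicity $\partial J_m/\partial\eta \ge C_0$ to supply the $L^2$-coercivity in the constant direction for $\phi_2$ that $\int_{\Omega_2}\sigma|\nabla\cdot|^2$ alone lacks (combined with Poincar\'e for the mean-zero $\theta_{\phi_1}$), and the $\kappa_2\nabla f(c_1)$ mismatch as the source of the $H^1$-norm of $c_1-c_{1h}$ on the right-hand side. The paper merely packages the same computation as a strong-monotonicity inequality for $a+B$ on the product space $H^1_*(\Omega)\times H^1(\Omega_2)$ instead of manipulating $\eta-\eta_h$ directly, and — as you correctly flag — it too implicitly uses local Lipschitz continuity of $f'$, which is slightly more than the stated $f\in C^1$.
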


    \begin{remark}
        Note the dependence on the $H^1$-norm on the right-hand side of \eqref{eq:DFN_fem_estimation_Phi}, which results from not performing the change of variables. This dependence motivates the introduction of the undetermined parameter $\epsilon$ in Lemma~\ref{lemma:DFN_fem_estimation_theta_c2} and subsequently in Lemma~\ref{lemma:DFN_fem_estimation_theta_c1}.
    \end{remark}

    \begin{lemma}\label{lemma:DFN_fem_estimation_theta_c1}
        Assume that $c_1 \in H^1\qty(0,T; H^2_\mathrm{pw}(\Omega_1))$ and that Assumptions~\ref{asp:nonlinear_function} and \ref{asp:DFN_prior_bound}-\ref{asp:DFN_fem_prior_bound} hold. There exist an arbitrarily small number $\epsilon > 0$ and a positive constant $C\qty(\epsilon) $ that do not depend on $h$ and $\Delta r$, such that for $t\in \qty[0,T]$ a.e., 
        \begin{multline}\label{eq:DFN_fem_estimation_theta_c1}
            \dv{}{t}\norm{\theta_{c_1}(t)}_{L^2\qty(\Omega_1)}^2 + \underline{k_1} \norm{\nabla \theta_{c_1}(t)}_{L^2\qty(\Omega_1)}^2 \\ 
            \le C\qty(\epsilon)h^2 \qty(\norm{c_1(t)}_{H_\mathrm{pw}^2\qty(\Omega_1)}^2 + \norm{\pdv{c_1}{t} \qty(t)}_{H_\mathrm{pw}^2\qty(\Omega_1)}^2) + C\qty(\epsilon) \norm{\theta_{c_1}(t)}_{L^2\qty(\Omega)}^2  \\ 
            + \epsilon\qty(\norm{\bar c_2(t) - \bar c_{2h}(t)}_{L^2\qty(\Omega_2)}^2 + \norm{\phi_1(t)-\phi_{1h}(t)}_{L^2\qty(\Omega)}^2 + \norm{\phi_2(t)-\phi_{2h}(t)}^2_{L^2\qty(\Omega_2)}  ).
        \end{multline}
    \end{lemma}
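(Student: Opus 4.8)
The plan is to mimic the standard energy argument for semi-discrete parabolic error estimates, following the $\theta$-$\rho$ splitting introduced in \eqref{app_eq:DFN_fem_c1_err_decomp}. First I would subtract the finite element equation \eqref{eq:DFN_fem_c1} from the weak equation \eqref{eq:DFN_weak_c1} (recalling the convention $\varepsilon_1\equiv 1$) to obtain the error equation
\begin{equation*}
  \int_\Omega \dv{(c_1-c_{1h})}{t}\,v_{1h}\,\dd x + \int_\Omega k_1\nabla(c_1-c_{1h})\cdot\nabla v_{1h}\,\dd x = \int_{\Omega_2} a_1(J-J_h)\,v_{1h}\,\dd x,\qquad \forall v_{1h}\in V_h^{(1)}(\bar\Omega).
\end{equation*}
Substituting $c_1-c_{1h}=\rho_{c_1}+\theta_{c_1}$ and using the defining property \eqref{eq:DFN_projection_c1} of $P_h^3$ to kill the $\nabla\rho_{c_1}$ term (at the cost of a zeroth-order term $\int_\Omega \rho_{c_1}v_{1h}$), I would then take $v_{1h}=\theta_{c_1}\in V_h^{(1)}(\bar\Omega)$. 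This produces
\begin{equation*}
  \tfrac12\dv{}{t}\norm{\theta_{c_1}}_{L^2(\Omega)}^2 + \int_\Omega k_1\abs{\nabla\theta_{c_1}}^2\,\dd x = \int_{\Omega_2}a_1(J-J_h)\theta_{c_1}\,\dd x + \int_\Omega\rho_{c_1}\theta_{c_1}\,\dd x - \int_\Omega \dv{\rho_{c_1}}{t}\theta_{c_1}\,\dd x.
\end{equation*}
The second-order term on the left is bounded below by $\underline{k_1}\norm{\nabla\theta_{c_1}}_{L^2(\Omega)}^2$ since $k_1\ge\underline{k_1}>0$.

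Next I would estimate the three terms on the right. For the source term, the Lipschitz estimate \eqref{eq:DFN_estimation_J} (valid under Assumptions~\ref{asp:nonlinear_function}, \ref{asp:DFN_prior_bound}, \ref{asp:DFN_fem_prior_bound}) gives $\norm{J-J_h}_{L^2(\Omega_2)}\le C(\norm{c_1-c_{1h}}_{L^2(\Omega)}+\norm{\bar c_2-\bar c_{2h}}_{L^2(\Omega_2)}+\norm{\phi_1-\phi_{1h}}_{L^2(\Omega)}+\norm{\phi_2-\phi_{2h}}_{L^2(\Omega_2)})$; applying Cauchy--Schwarz and then Young's inequality with the free parameter $\epsilon$, I split off $\epsilon$ times the $\phi_i$ and $\bar c_2$ contributions (these are exactly the terms that survive on the right-hand side of \eqref{eq:DFN_fem_estimation_theta_c1}) and absorb $\norm{c_1-c_{1h}}_{L^2(\Omega)}\le\norm{\rho_{c_1}}_{L^2(\Omega)}+\norm{\theta_{c_1}}_{L^2(\Omega)}$, using \eqref{eq:DFN_fem_c1_projection_error} for the $\rho_{c_1}$ piece to generate the $C(\epsilon)h^2\norm{c_1}_{H^2_\mathrm{pw}(\Omega_1)}^2$ contribution and leaving $C(\epsilon)\norm{\theta_{c_1}}_{L^2(\Omega)}^2$. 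The remaining two terms $\int_\Omega\rho_{c_1}\theta_{c_1}$ and $\int_\Omega\partial_t\rho_{c_1}\,\theta_{c_1}$ are handled directly by Cauchy--Schwarz and Young, using \eqref{eq:DFN_fem_c1_projection_error} and \eqref{eq:DFN_fem_c1t_projection_error} to bound $\norm{\rho_{c_1}}_{L^2(\Omega)}$ and $\norm{\partial_t\rho_{c_1}}_{L^2(\Omega)}$ by $Ch\norm{c_1}_{H^2_\mathrm{pw}(\Omega_1)}$ and $Ch\norm{\partial_t c_1}_{H^2_\mathrm{pw}(\Omega_1)}$ respectively; these feed the $C(\epsilon)h^2(\norm{c_1}_{H^2_\mathrm{pw}}^2+\norm{\partial_t c_1}_{H^2_\mathrm{pw}}^2)$ term, with a further $C(\epsilon)\norm{\theta_{c_1}}_{L^2(\Omega)}^2$. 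Collecting everything yields \eqref{eq:DFN_fem_estimation_theta_c1}.

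The argument is essentially routine; the only genuinely delicate point is the bookkeeping of the undetermined parameter $\epsilon$. Because no change of variables has been performed, the $\phi_i$-error terms coming out of \eqref{eq:DFN_estimation_J} must be kept with a small coefficient $\epsilon$ rather than absorbed, so that later (when this lemma is combined with Lemmas~\ref{lemma:DFN_fem_estimation_Phi} and \ref{lemma:DFN_fem_estimation_theta_c2} in a Gr\"onwall argument for the fully coupled system) the cross-coupling can be closed; one must be careful that every use of Young's inequality here sends the $\phi_i$ and $\bar c_2$ contributions into the $\epsilon$-weighted bucket and never into a term that would have to be controlled by the left-hand side. Everything else---the coercivity of $k_1$, the interpolation bounds \eqref{eq:DFN_fem_c1_projection_error}--\eqref{eq:DFN_fem_c1t_projection_error}, and the Lipschitz bound \eqref{eq:DFN_estimation_J}---is already available in the excerpt, so no additional obstacle arises.
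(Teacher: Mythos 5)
Your proposal is correct and follows essentially the same route as the paper's own proof: the same error equation obtained from \eqref{eq:DFN_weak_c1}, \eqref{eq:DFN_fem_c1} and the defining property \eqref{eq:DFN_projection_c1} of $P_h^3$ (which trades the $\nabla\rho_{c_1}$ term for a zeroth-order $\rho_{c_1}$ term), the test function $v_h=\theta_{c_1}$, the Lipschitz bound \eqref{eq:DFN_estimation_J}, and Young's inequality with the $\phi_i$ and $\bar c_2$ contributions kept in the $\epsilon$-weighted bucket. No gaps.
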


    Based on Lemmas~\ref{lemma:DFN_fem_estimation_theta_c2}, \ref{lemma:DFN_fem_estimation_Phi} and \ref{lemma:DFN_fem_estimation_theta_c1}, we arrive at the main result of this paper, presented in the following theorem.

    \begin{theorem}\label{thm:DFN_fem_semi_estimation} 
        If Assumptions~\ref{asp:IB_condition}-\ref{asp:nonlinear_function} and \ref{asp:DFN_fem_IC}-\ref{asp:DFN_fem_prior_bound} hold, 
      there exists a constant $C$ that does not depend on $h$ and $\Delta r$, such that for $t \in \qty[0,T]$ a.e.,
      \begin{multline*}
          \norm{\phi_1 - \phi_{1h}}_{L^2\qty(0,t;H^1\qty(\Omega))} + \norm{\phi_2 - \phi_{2h}}_{L^2\qty(0,t;H^1\qty(\Omega_2))}  +\norm{c_1 - c_{1h}}_{L^2\qty(0,t;H^1\qty(\Omega))} +\norm{\bar c_{2} - \bar c_{2h}}_{L^2\qty(0,t;L^2\qty(\Omega_2))} \\ \le C\qty(h + \qty(\Delta r)^2)  + \norm{c_{10}-c_{1h}^0}_{L^2\qty(\Omega)} + \norm{c_{20}-c_{2h\Delta r}^0}_{L^2\qty(\Omega_2 ; L_r^2\qty(0, R_\mathrm{s}\qty(\cdot)))},  
      \end{multline*}
      \begin{multline*}
          \left\|c_2 - c_{2h\Delta r}\right\|_{L^2\qty(0,t;L^2\left(\Omega_2; H_r^q\left(0, R_\mathrm{s}(\cdot)\right)\right))}  \\ \le C\qty(h +\qty(\Delta r)^\qty(2-q))
          + \norm{c_{10}-c_{1h}^0}_{L^2\qty(\Omega)} + \norm{c_{20}-c_{2h\Delta r}^0}_{L^2\qty(\Omega_2 ; L_r^2\qty(0, R_\mathrm{s}\qty(\cdot)))} \qcomma{} q=0,1. 
      \end{multline*}  
    \end{theorem}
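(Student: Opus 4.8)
\noindent The plan is to run a single Gr\"onwall estimate for the projection‑difference parts $\theta_{c_1}$ and $\theta_{c_2}$ of the errors (see \eqref{app_eq:DFN_fem_c1_err_decomp} and \eqref{eq:DFN_fem_c2_err_decomp}), and then recover the full errors by the triangle inequality together with the already‑proved estimates for the $\rho$‑parts. Write
\[
E(t):=\norm{\theta_{c_1}(t)}_{L^2(\Omega)}^2+\norm{\theta_{c_2}(t)}_{L^2(\Omega_2;L^2_r(0,R_\mathrm{s}(\cdot)))}^2,\qquad
D(t):=\underline{k_1}\norm{\nabla\theta_{c_1}(t)}_{L^2(\Omega)}^2+\underline{k_2}\norm{\tfrac{\partial\theta_{c_2}}{\partial r}(t)}_{L^2(\Omega_2;L^2_r(0,R_\mathrm{s}(\cdot)))}^2 .
\]
Adding the differential inequalities \eqref{eq:DFN_fem_estimation_theta_c1} and \eqref{eq:DFN_fem_estimation_theta_c2} gives, for a.e.\ $t\in[0,T]$, an estimate of the form
\[
\frac{d}{dt}E(t)+D(t)\;\le\;\epsilon\,\Phi(t)+C(\epsilon)\bigl(h^2+(\Delta r)^4\bigr)g(t)+C(\epsilon)E(t),
\]
where $g\in L^1(0,T)$ is assembled from the regularity norms of $c_1$ and $c_2$ in Assumption~\ref{asp:DFN_fem_regularity}, and $\Phi(t):=\norm{(c_1-c_{1h})(t)}_{L^2(\Omega)}^2+\norm{(\phi_1-\phi_{1h})(t)}_{L^2(\Omega)}^2+\norm{(\phi_2-\phi_{2h})(t)}_{L^2(\Omega_2)}^2$.

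\noindent The next step is to eliminate $\Phi$. Bounding $\norm{\phi_i-\phi_{ih}}_{L^2}\le\norm{\phi_i-\phi_{ih}}_{H^1}$ and invoking Lemma~\ref{lemma:DFN_fem_estimation_Phi}, then splitting $c_1-c_{1h}=\rho_{c_1}+\theta_{c_1}$ and $\bar c_2-\bar c_{2h}=\rho_{c_2}(\cdot,R_\mathrm{s}(\cdot))+\theta_{c_2}(\cdot,R_\mathrm{s}(\cdot))$, and finally applying the radial trace estimate of Proposition~\ref{prop:DFN_radial_surface} to $\theta_{c_2}$,
\[
\norm{\theta_{c_2}(\cdot,R_\mathrm{s}(\cdot))}_{L^2(\Omega_2)}\le\tilde\epsilon\,\norm{\tfrac{\partial\theta_{c_2}}{\partial r}}_{L^2(\Omega_2;L^2_r(0,R_\mathrm{s}(\cdot)))}+C(\tilde\epsilon)\norm{\theta_{c_2}}_{L^2(\Omega_2;L^2_r(0,R_\mathrm{s}(\cdot)))},
\]
one bounds $\Phi(t)$ by $C\norm{\theta_{c_1}}_{H^1(\Omega)}^2+C\tilde\epsilon^2\norm{\partial_r\theta_{c_2}}_{L^2(\Omega_2;L^2_r(0,R_\mathrm{s}(\cdot)))}^2+C(\tilde\epsilon)E(t)$ plus terms of order $h^2+(\Delta r)^4$ arising from $\norm{\rho_{c_1}}_{H^1(\Omega)}$, $\norm{\rho_{c_2}(\cdot,R_\mathrm{s}(\cdot))}_{L^2(\Omega_2)}$ and $\norm{\phi_i}_{H^2_\mathrm{pw}}$ (via \eqref{eq:DFN_fem_c1_projection_error}, Theorem~\ref{lemma:DFN_projection_hdr_surf} and Assumption~\ref{asp:DFN_fem_regularity}). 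The essential point is that the reappearing gradient terms $\norm{\nabla\theta_{c_1}}^2$ and $\norm{\partial_r\theta_{c_2}}^2$ now carry the free factors $\epsilon$ and $\epsilon\tilde\epsilon^2$; choosing $\tilde\epsilon$ small first and then $\epsilon$ small relative to $\underline{k_1},\underline{k_2}$ absorbs them into $\tfrac12 D(t)$, leaving
\[
\frac{d}{dt}E(t)+\tfrac12 D(t)\le C\bigl(h^2+(\Delta r)^4\bigr)g(t)+C\,E(t),\qquad g\in L^1(0,T),
\]
with $C$ independent of $h$ and $\Delta r$.

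\noindent Integrating this and applying Gr\"onwall's lemma on $[0,t]$ yields $\sup_{[0,t]}E+\tfrac12\int_0^t D\le C\bigl(h^2+(\Delta r)^4\bigr)+C E(0)$, and $E(0)\le C(h^2+(\Delta r)^4)+2\norm{c_{10}-c_{1h}^0}_{L^2(\Omega)}^2+2\norm{c_{20}-c_{2h\Delta r}^0}_{L^2(\Omega_2;L^2_r(0,R_\mathrm{s}(\cdot)))}^2$ by the triangle inequality, the $P_h^3$ estimate following \eqref{eq:DFN_projection_c1}, Theorem~\ref{lemma:DFN_projection_hdr} with $q=0$ and Assumption~\ref{asp:DFN_fem_IC}. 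Abbreviate $\Sigma:=h+(\Delta r)^2+\norm{c_{10}-c_{1h}^0}_{L^2(\Omega)}+\norm{c_{20}-c_{2h\Delta r}^0}_{L^2(\Omega_2;L^2_r(0,R_\mathrm{s}(\cdot)))}$, so that $\sup_{[0,t]}E+\int_0^t D\le C\Sigma^2$. The stated bounds then follow by the triangle inequality and the $\rho$‑estimates: for $c_1$, $\norm{c_1-c_{1h}}_{L^2(0,t;H^1)}\le\norm{\rho_{c_1}}_{L^2(0,t;H^1)}+\norm{\theta_{c_1}}_{L^2(0,t;H^1)}$, the first term being $O(h)$ by \eqref{eq:DFN_fem_c1_projection_error} and $\norm{\theta_{c_1}}_{L^2(0,t;H^1)}^2\le T\sup_{[0,t]}E+\underline{k_1}^{-1}\int_0^t D\le C\Sigma^2$; for $\bar c_2$, use Theorem~\ref{lemma:DFN_projection_hdr_surf} for $\rho_{c_2}(\cdot,R_\mathrm{s}(\cdot))$ and Proposition~\ref{prop:DFN_radial_surface} for $\theta_{c_2}(\cdot,R_\mathrm{s}(\cdot))$ together with $\int_0^t D\le C\Sigma^2$; for $\phi_1,\phi_2$, integrate Lemma~\ref{lemma:DFN_fem_estimation_Phi} over $[0,t]$ and insert the bounds just obtained for $\norm{c_1-c_{1h}}_{L^2(0,t;H^1)}$ and $\norm{\bar c_2-\bar c_{2h}}_{L^2(0,t;L^2(\Omega_2))}$; and for $c_2$ in $L^2(0,t;L^2(\Omega_2;H^q_r))$, $q=0,1$, use Theorem~\ref{lemma:DFN_projection_hdr} for $\rho_{c_2}$ and, for the $\theta_{c_2}$ part, $T\sup_{[0,t]}E$ when $q=0$ and $T\sup_{[0,t]}E+\underline{k_2}^{-1}\int_0^t D$ when $q=1$. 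Taking square roots and using $\sqrt{a+b}\le\sqrt a+\sqrt b$ produces exactly the two displayed estimates.

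\noindent The main obstacle is precisely the circular coupling exploited above: since no change of variables is made for $\phi_1$, Lemma~\ref{lemma:DFN_fem_estimation_Phi} controls the potential errors only through the \emph{full} $H^1$‑error of $c_1$ (and the $L^2$‑error of $\bar c_2$), so $\norm{\nabla\theta_{c_1}}$ and, via the radial trace, $\norm{\partial_r\theta_{c_2}}$ are forced back onto the right‑hand side of the combined parabolic inequality. Organising the nested small parameters ($\tilde\epsilon$ chosen before $\epsilon$) so that these gradient terms are dominated by the parabolic dissipation, while everything else stays of order $h^2+(\Delta r)^4$ or proportional to $E(t)$, is the only genuinely delicate point; the remainder is routine use of the triangle inequality, the projection estimates of Theorems~\ref{lemma:DFN_projection_hdr} and \ref{lemma:DFN_projection_hdr_surf}, and Gr\"onwall's inequality.
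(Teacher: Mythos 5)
Your proposal is correct and follows essentially the same route as the paper: add the two $\theta$-inequalities (Lemmas~\ref{lemma:DFN_fem_estimation_theta_c1} and \ref{lemma:DFN_fem_estimation_theta_c2}), eliminate the potential errors via Lemma~\ref{lemma:DFN_fem_estimation_Phi}, split the resulting $c_1$ and $\bar c_2$ errors into $\rho$- and $\theta$-parts with the radial trace estimate, absorb the reappearing gradient terms into the dissipation by choosing the nested small parameters, apply Gr\"onwall, and recover the stated bounds by the triangle inequality and the projection estimates. The only cosmetic difference is that you use a single $\epsilon$ (with $\tilde\epsilon$ inside) where the paper carries two parameters $\epsilon_{c_1},\epsilon_{c_2}$; the absorption mechanism is identical.
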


    \begin{proof}
        By combining Lemma~\ref{lemma:DFN_fem_estimation_theta_c1} with Lemma~\ref{lemma:DFN_fem_estimation_Phi}, along with \eqref{eq:DFN_c2_surface_estimation} and \eqref{app_eq:DFN_fem_c1_err_decomp}-\eqref{eq:DFN_fem_c1t_projection_error}, there is a constant $\epsilon_{c_1}>0$ to be determined later, such that
      \begin{multline}\label{eq:DFN_fem_estimation_theta_c1_expand}
          \dv{}{t}\norm{\theta_{c_1}(t)}_{L^2\qty(\Omega)}^2 + \underline{k_1} \norm{\nabla \theta_{c_1}}_{L^2\qty(\Omega)}^2 \\ 
          \begin{aligned}
              \le 
              & C\qty(\epsilon_{c_1}) h^2 \qty(\norm{c_1(t)}_{H_\mathrm{pw}^2\qty(\Omega_1)}^2 + \norm{\pdv{c_1}{t} \qty(t)}_{H_\mathrm{pw}^2\qty(\Omega_1)}^2+  \norm{c_2\qty(t)}^2_{H^1\qty(\Omega_2 ; H_r^1(0, R_\mathrm{s}\qty(\cdot)))})  \\
              & + C\qty(\epsilon_{c_1}) h^2\qty( \norm{\phi_1(t)}_{H^2_\mathrm{pw}(\Omega_1)}^2 + \norm{\phi_2(t)}_{H^2_\mathrm{pw}(\Omega_2)}^2) \\ 
              & + C\qty(\Delta r)^4 \norm{c_2\qty(t)}^2_{L^2\qty(\Omega_2 ; H_r^2(0, R_\mathrm{s}\qty(\cdot)))} \\  
              & + C\qty(\epsilon_{c_1}) \qty( \norm{\theta_{c_1}\qty(t)}_{L^2\qty(\Omega)}^2 +  \norm{\theta_{c_2}\qty(t)}_{L^2\qty(\Omega_2 ; L_r^2(0, R_\mathrm{s}\qty(\cdot)))}^2)  \\ 
              & + \epsilon_{c_1} \qty( \norm{\nabla \theta_{c_1}\qty(t)}^2_{L^2\qty(\Omega)} + \norm{\pdv{\theta_{c_2}}{r} \qty(t)}_{L^2\qty(\Omega_2;L_r^2\qty(0,R_\mathrm{s}(\cdot)))}) .
          \end{aligned}
      \end{multline}
      Again, by combining the error estimates in Lemma~\ref{lemma:DFN_fem_estimation_theta_c2} with Lemma~\ref{lemma:DFN_fem_estimation_Phi}, along with \eqref{eq:DFN_c2_surface_estimation}, \eqref{app_eq:DFN_fem_c1_err_decomp} and \eqref{eq:DFN_fem_c1_projection_error}, there is also a constant $\epsilon_{c_2}>0$ to be determined later, such that
      \begin{multline}\label{eq:DFN_fem_estimation_theta_c2_expand}
          \frac{d}{d t}  \left\|\theta_{c_2}(t)\right\|_{L^2\left(\Omega_2; L_r^2\left(0, R_\mathrm{s}(\cdot)\right)\right)}^2+\underline{k_2}\left\|\frac{\partial \theta_{c_2}(t)}{\partial r}\right\|_{L^2\left(\Omega_2; L_r^2\left(0, R_\mathrm{s}(\cdot)\right)\right)}^2 \\
          \begin{aligned}
              \leq 
              &C\qty(\epsilon_{c_2}) \qty(\Delta r)^4\left(\|c_2(t)\|_{L^2\left(\Omega_2; H_r^2\left(0, R_\mathrm{s}(\cdot)\right)\right)}^2+\left\|\frac{\partial c_2}{\partial t}\qty(t)\right\|_{L^2\left(\Omega_2; H_r^2\left(0, R_\mathrm{s}(\cdot)\right)\right)}^2\right) \\
              & + C\qty(\epsilon_{c_2}) h^2\left(  \|c_2(t)\|_{H^1\left(\Omega_2; H_r^1\left(0, R_\mathrm{s}(\cdot)\right)\right)}^2+\left\|\frac{\partial c_2}{\partial t}\qty(t)\right\|_{H^1\left(\Omega_2; H_r^1\left(0, R_\mathrm{s}(\cdot)\right)\right)}^2\right) \\ 
              &+ C\qty(\epsilon_{c_2}) h^2\qty(\norm{{c_1}\qty(t)}_{H_\mathrm{pw}^2\qty(\Omega_1)}^2 + \norm{\phi_1(t)}_{H^2_\mathrm{pw}(\Omega_1)}^2 + \norm{\phi_2(t)}_{H^2_\mathrm{pw}(\Omega_2)}^2) \\
              & + C\qty(\epsilon_{c_2})\qty(\norm{\theta_{c_1}\qty(t)}_{L^2\qty(\Omega)}^2 + \left\|\theta_{c_2}(t)\right\|^2_{L^2\left(\Omega_2; L_r^2\left(0, R_\mathrm{s}(\cdot)\right)\right)}) \\
              &+ \epsilon_{c_2}\left( \norm{\nabla \theta_{c_1}\qty(t)}^2_{L^2\qty(\Omega)} + \norm{\pdv{\theta_{c_2}}{r}\qty(t)}_{L^2\qty(\Omega_2;L_r^2\qty(0,R_\mathrm{s}(\cdot)))}\right). \\
          \end{aligned}
      \end{multline}
      Next, by adding \eqref{eq:DFN_fem_estimation_theta_c1_expand} and \eqref{eq:DFN_fem_estimation_theta_c2_expand}, and selecting $\epsilon_{c_1}$, $\epsilon_{c_2}$ sufficiently small, we have
      \begin{multline*}
          \frac{d}{d t}\left(\left\|\theta_{c_1}(t)\right\|_{L^2\left(\Omega\right)}^2+\left\|\theta_{c_2}(t)\right\|_{L^2\left(\Omega_2 ; L_r^2\left(0, R_\mathrm{s}(\cdot)\right)\right)}^2\right)   
          + \norm{\nabla \theta_{c_1}(t)}_{L^2\qty(\Omega)}^2 + \left\|\frac{\partial \theta_{c_2}}{\partial r}\qty(t)\right\|_{L^2\left(\Omega_2; L_r^2\left(0, R_\mathrm{s}(\cdot)\right)\right)}^2  \\
          \begin{aligned}
          \leq 
          &C h^2\qty(\norm{\phi_1(t)}_{H^2_\mathrm{pw}(\Omega_1)}^2 + \norm{\phi_2(t)}_{H^2_\mathrm{pw}(\Omega_2)}^2 + \norm{{c_1}\qty(t)}_{H_\mathrm{pw}^2\qty(\Omega_1)}^2 + \norm{\pdv{c_1}{t} \qty(t)}_{H_\mathrm{pw}^2\qty(\Omega_1)}^2) \\
          &+ C h^2\left(  \|c_2(t)\|_{H^1\left(\Omega_2; H_r^1\left(0, R_\mathrm{s}(\cdot)\right)\right)}^2 + \left\|\frac{\partial c_2}{\partial t}\qty(t)\right\|_{H^1\left(\Omega_2; H_r^1\left(0, R_\mathrm{s}(\cdot)\right)\right)}^2\right) \\ 
          &+ C \qty(\Delta r)^4\left(\|c_2(t)\|_{L^2\left(\Omega_2; H_r^2\left(0, R_\mathrm{s}(\cdot)\right)\right)}^2+\left\|\frac{\partial c_2}{\partial t}\qty(t)\right\|_{L^2\left(\Omega_2; H_r^2\left(0, R_\mathrm{s}(\cdot)\right)\right)}^2\right) \\ 
          &+ C\qty(\norm{\theta_{c_1}\qty(t)}_{L^2\qty(\Omega)}^2 + \left\|\theta_{c_2}(t)\right\|^2_{L^2\left(\Omega_2; L_r^2\left(0, R_\mathrm{s}(\cdot)\right)\right)}) .\\
          \end{aligned}
      \end{multline*}
      Hence Gronwall's inequality yields,
      \begin{multline*}
        \left\|\theta_{c_1}(t)\right\|_{L^2(\Omega)}^2+\left\|\theta_{c_2}(t)\right\|_{L^2\left(\Omega_2 ; L_r^2\left(0, R_\mathrm{s}(\cdot)\right)\right)}^2 + 
            \int_{0}^{t}\norm{\nabla \theta_{c_1}(s)}_{L^2\qty(\Omega)}^2 \dd s  + \int_{0}^{t} \left\|\frac{\partial \theta_{c_2}}{\partial r}\qty(s)\right\|_{L^2\left(\Omega_2; L_r^2\left(0, R_\mathrm{s}(\cdot)\right)\right)}^2 \dd s \\ 
            \leq C\left(\left\|\theta_{c_1}(0)\right\|_{L^2\left(\Omega\right)}^2+\left\|\theta_{c_2}(0)\right\|_{L^2\qty(\Omega_2 ; L_r^2\qty(0, R_\mathrm{s}\qty(\cdot)))}^2\right) +C\left(h^2+\qty(\Delta r)^4\right).
      \end{multline*}
      Along with \eqref{app_eq:DFN_fem_c1_err_decomp} and \eqref{eq:DFN_c2_surface_estimation}, it follows that 
      \begin{multline*}
          \int_{0}^{t}\left\|c_1(s)-c_{1h}(s)\right\|_{H^1\left(\Omega\right)}^2 \dd s + \int_{0}^{t}\left\|\bar c_2(s)-\bar c_{2h}(s)\right\|_{L^2\left(\Omega_2\right)}^2 \dd s  \\ \leq  C\left(h^2+\qty(\Delta r)^4\right) 
          + C\qty(\left\|\theta_{c_1}(0)\right\|_{L^2\left(\Omega\right)}^2+\left\|\theta_{c_2}(0)\right\|_{L^2\qty(\Omega_2 ; L_r^2\qty(0, R_\mathrm{s}\qty(\cdot)))}^2),
      \end{multline*} 
      \begin{multline*}
          \int_{0}^{t} \left\|c_2(s)-c_{2 h \Delta r}(s)\right\|_{L^2\left(\Omega_2 ; H_r^q\left(0, R_\mathrm{s}(\cdot)\right)\right)}^2 \dd s \\ \leq  C\left(h^2+\qty(\Delta r)^{4-2q}\right)  
          +C\qty(\left\|\theta_{c_1}(0)\right\|_{L^2\left(\Omega\right)}^2+\left\|\theta_{c_2}(0)\right\|_{L^2\qty(\Omega_2 ; L_r^2\qty(0, R_\mathrm{s}\qty(\cdot)))}^2),\quad q=0,1.
      \end{multline*}
      Further by Lemma~\ref{lemma:DFN_fem_estimation_Phi}, we have
      \begin{multline*}
          \int_{0}^{t} \left\|\phi_1(s)-\phi_{1h}(s)\right\|_{H^1\left(\Omega\right)}^2 \dd s + \int_{0}^{t}\left\|\phi_2(s)-\phi_{2h}(s)\right\|_{H^1\left(\Omega_2\right)}^2 \dd s
          \\ \leq  C\left(h^2+\qty(\Delta r)^4\right) 
          +C\qty(\left\|\theta_{c_1}(0)\right\|_{L^2\left(\Omega\right)}^2+\left\|\theta_{c_2}(0)\right\|_{L^2\qty(\Omega_2 ; L_r^2\qty(0, R_\mathrm{s}\qty(\cdot)))}^2).
      \end{multline*} 
      Notice that triangle inequality yields 
      \begin{gather*}
        \left\|\theta_{c_1}(0)\right\|_{L^2\left(\Omega\right)} \le \norm{c_{10}-c_{1h}^0}_{L^2\left(\Omega\right)} + Ch, \\ 
        \left\|\theta_{c_2}(0)\right\|_{L^2\qty(\Omega_2 ; L_r^2\qty(0, R_\mathrm{s}\qty(\cdot)))} \le \norm{c_{20}-c_{2h\Delta r}^0}_{L^2\qty(\Omega_2 ; L_r^2\qty(0, R_\mathrm{s}\qty(\cdot)))} + C\qty(h+\qty(\Delta r)^2),
      \end{gather*}
      which completes the proof.
    \end{proof}

\section{Numerical experiments}\label{sec:experiments}
    To validate the theoretical analysis presented in this paper, we perform numerical simulations for the DFN model in both 2D+1D and 3D+1D settings, using real battery parameters from \citet{timms_asymptotic_pouch_2021}. All simulations are carried out using our in-house finite element code based on the libMesh library \citep{libMeshPaper}. 

    \subsection{Case 1: The P3D model}
    \begin{figure}[htbp]
      \centering
      \subfigure[]{
          \includegraphics[width=0.48\textwidth]{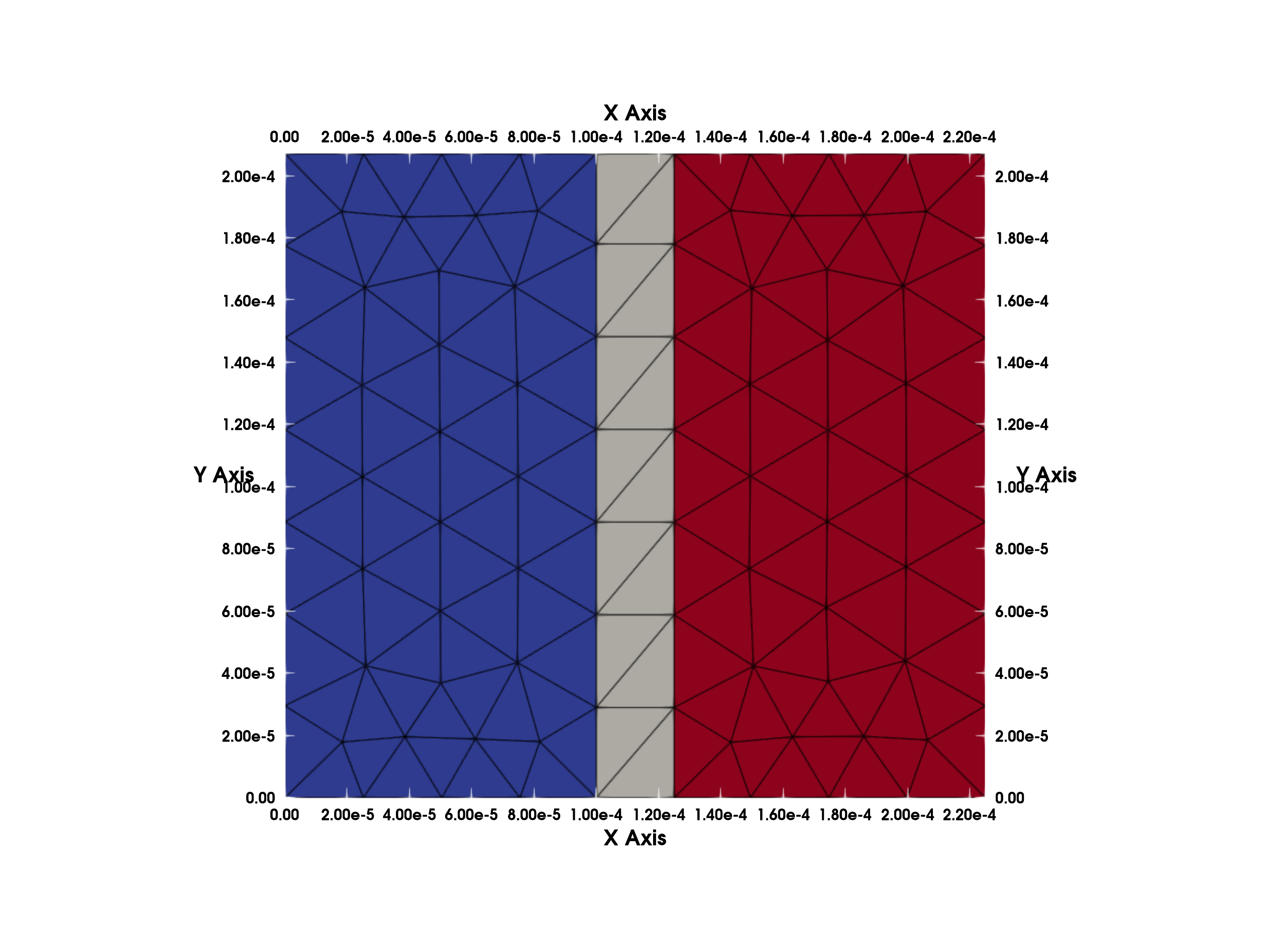}
          \label{fig:DFN_exp_2d_mesh_r0} 
      }%
      \hfill
      \subfigure[]{
          \includegraphics[width=0.48\textwidth]{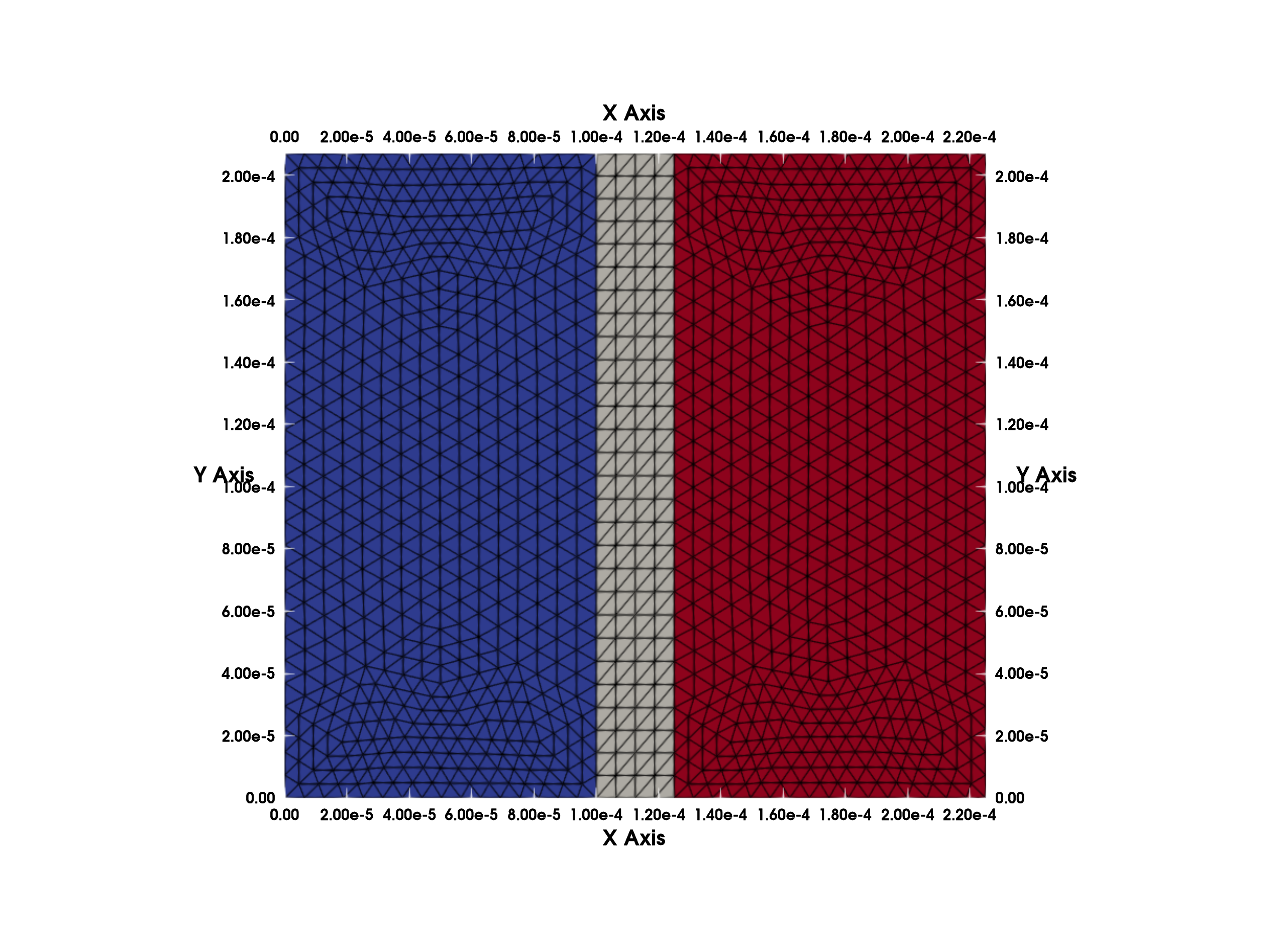}
          \label{fig:DFN_exp_2d_mesh_fine}
      }
      \caption{Spatial meshes for 2d convergence verification. (a) Initial coarse mesh ($R_h=0$). (b) Uniformly refined mesh ($R_h=2$).}
      \label{fig:DFN_exp_2d_mesh} 
    \end{figure}

    In this case, we define the subdomains as follows: $\Omega_\mathrm{n} = [0,100] \times [0,207]$, $\Omega_\mathrm{s} = [100,125] \times [0,207]$, and $\Omega_\mathrm{p} = [125,225] \times [0,207]$ (all lengths in $10^{-6}\mathrm{m}$). The boundary $\Gamma$ is defined as $\Gamma_\mathrm{n} \cup \Gamma_\mathrm{p}$, where $\Gamma_\mathrm{n} = \qty{0} \times \qty[0,207]$ and $\Gamma_\mathrm{p} = \qty{225} \times \qty[0,207]$. A 1C discharge rate is applied.
        
    The initial spatial mesh, shown in Fig.~\ref{fig:DFN_exp_2d_mesh_r0}, is uniformly refined with refinement level $R_h$. 
    The radial grid is initially uniform, with a grid spacing of $\Delta r = 1.25 \times 10^{-6}\mathrm{m}$ and refinement level $R_{\Delta r}$.
    Since the exact solution is unavailable, we use the finite element solution computed on a highly refined mesh ($R_h=5$, $R_{\Delta r} = 5$) as the reference. 
    The influence of temporal discretization via the implicit Euler method is negligible due to the small time step $\Delta t = 0.15625 \mathrm{s}$.

    To assess the convergence with respect to $h$, we fix $R_{\Delta r} = 5$ and refine the spatial mesh from $R_h = 1$ to $R_h = 3$.
    Conversely, to test the convergence with respect to $\Delta r$, we fix $R_h = 5$ and refine the radial grid from $R_{\Delta r} = 1$ to $R_{\Delta r} = 3$.
    The corresponding errors and observed convergence rates at $t_k = k\Delta t$ are reported in Table~\ref{tab:DFN_exp_err_h} and Table~\ref{tab:DFN_exp_err_r}, respectively.
    The numerical results confirm the convergence rates predicted by our theoretical analysis.

    \begin{table*}[h]
        \caption{Error and convergence order for $h$.\label{tab:DFN_exp_err_h}}
        \tabcolsep=0pt
        \begin{tabular*}{\textwidth}{@{\extracolsep{\fill}}lcccccccc@{\extracolsep{\fill}}}
            \toprule%
            & \multicolumn{4}{@{}c@{}}{$\norm{\phi_1(\cdot,t_k) - \phi_{1h}^k}_{H^1\qty(\Omega)}$} & \multicolumn{4}{@{}c@{}}{$\norm{\phi_2(\cdot,t_k) - \phi_{2h}^k}_{H^1\qty(\Omega_2)}$} \\
            \cline{2-5}\cline{6-9}%
            k & $R_h=1$ & $R_h=2$ & $R_h=3$ & Rates & $R_h=1$ & $R_h=2$ & $R_h=3$ & Rates \\
            \midrule
             2 & 9.90E-04 & 4.92E-04 & 2.40E-04  & 1.04 & 3.32E-05 & 1.65E-05 & 8.06E-06 & 1.03 \\ 
             4 & 9.90E-04 & 4.90E-04 & 2.39E-04  & 1.04 & 3.32E-05 & 1.65E-05 & 8.06E-06 & 1.03 \\ 
             6 & 1.02E-03 & 5.06E-04 & 2.47E-04  & 1.04 & 3.32E-05 & 1.65E-05 & 8.06E-06 & 1.03 \\ 
             8 & 1.06E-03 & 5.25E-04 & 2.56E-04  & 1.04 & 3.32E-05 & 1.65E-05 & 8.06E-06 & 1.03 \\ 
            10 & 1.05E-03 & 5.19E-04 & 2.53E-04 & 1.04 & 3.32E-05 & 1.65E-05 & 8.06E-06 & 1.03 \\
            \midrule
            & \multicolumn{4}{@{}c@{}}{$\norm{c_1(\cdot,t_k) - c_{1h}^k}_{H^1\qty(\Omega)}$} & \multicolumn{4}{@{}c@{}}{$\norm{\bar c_2(\cdot,t_k) - \bar c_{2h}^k}_{L^2\qty(\Omega_2)}$} \\
            \cline{2-5}\cline{6-9}%
            k & $R_h=1$ & $R_h=2$ & $R_h=3$ & Rates & $R_h=1$ & $R_h=2$ & $R_h=3$ & Rates \\
            \midrule
            2 & 4.24E+00 & 2.25E+00 & 1.11E+00  & 1.02 & 1.36E-02 & 6.77E-03 & 3.31E-03 & 1.04 \\
            4 & 4.70E+00 & 2.37E+00 & 1.16E+00  & 1.03 & 2.96E-03 & 1.47E-03 & 7.18E-04 & 1.04 \\
            6 & 5.70E+00 & 2.89E+00 & 1.42E+00  & 1.03 & 7.91E-03 & 3.94E-03 & 1.92E-03 & 1.03 \\
            8 & 6.80E+00 & 3.42E+00 & 1.68E+00  & 1.03 & 1.73E-02 & 8.63E-03 & 4.21E-03 & 1.04 \\
            10 & 6.83E+00 & 3.40E+00 & 1.66E+00 & 1.04  & 1.21E-02 & 5.99E-03 & 2.92E-03 & 1.03 \\
            \midrule
            & \multicolumn{4}{@{}c@{}}{$\norm{c_2(\cdot,t_k) - c_{2h\Delta r}^k}_{L^2\qty(\Omega_2;H^1_r\qty(0,R_\text{s}\qty(\cdot)))}$} & \multicolumn{4}{@{}c@{}}{$\norm{c_2(\cdot,t_k) - c_{2h\Delta r}^k}_{L^2\qty(\Omega_2;L^2_r\qty(0,R_\text{s}\qty(\cdot)))}$} \\
            \cline{2-5}\cline{6-9}%
            k & $R_h=1$ & $R_h=2$ & $R_h=3$ & Rates & $R_h=1$ & $R_h=2$ & $R_h=3$ & Rates \\
            \midrule
            2 & 2.45E-05 & 1.22E-05 & 5.94E-06  & 1.03 & 3.44E-12 & 1.71E-12 & 8.35E-13 & 1.04 \\
            4 & 9.85E-06 & 4.89E-06 & 2.39E-06  & 1.04 & 2.01E-12 & 9.97E-13 & 4.87E-13 & 1.04 \\
            6 & 1.16E-05 & 5.81E-06 & 2.84E-06  & 1.03 & 2.43E-12 & 1.21E-12 & 5.91E-13 & 1.03 \\
            8 & 2.71E-05 & 1.36E-05 & 6.62E-06  & 1.04 & 4.85E-12 & 2.42E-12 & 1.18E-12 & 1.04 \\
            10 & 1.86E-05 & 9.26E-06 & 4.52E-06 & 1.03  & 4.59E-12 & 2.28E-12 & 1.11E-12 & 1.03 \\
            \botrule
        \end{tabular*}
    \end{table*}

    \begin{table*}[h]
        \caption{Error and convergence order for $\Delta r$.\label{tab:DFN_exp_err_r}}
        \tabcolsep=0pt
        \begin{tabular*}{\textwidth}{@{\extracolsep{\fill}}lcccccccc@{\extracolsep{\fill}}}
            \toprule%
            & \multicolumn{4}{@{}c@{}}{$\norm{\phi_1(\cdot,t_k) - \phi_{1h}^k}_{H^1\qty(\Omega)}$} & \multicolumn{4}{@{}c@{}}{$\norm{\phi_2(\cdot,t_k) - \phi_{2h}^k}_{H^1\qty(\Omega_2)}$} \\
            \cline{2-5}\cline{6-9}%
            k & $R_{\Delta r}=1$ & $R_{\Delta r}=2$ & $R_{\Delta r}=3$ & Rates & $R_{\Delta r}=1$ & $R_{\Delta r}=2$ & $R_{\Delta r}=3$ & Rates \\
            \midrule
            2 & 1.33E-05 & 3.04E-06 & 7.09E-07   & 2.10 & 5.52E-08 & 1.28E-08 & 2.99E-09 & 2.10 \\

            4 & 1.86E-05 & 4.58E-06 & 1.08E-06   & 2.08 & 7.75E-08 & 1.89E-08 & 4.46E-09 & 2.08 \\
            
            6 & 5.07E-06 & 1.32E-06 & 3.15E-07   & 2.06 & 2.10E-08 & 5.40E-09 & 1.29E-09 & 2.07 \\
            
            8 & 2.60E-06 & 6.31E-07 & 1.50E-07   & 2.07 & 1.01E-08 & 2.44E-09 & 5.80E-10 & 2.07 \\
            
            10 & 4.99E-06 & 1.31E-06 & 3.14E-07  & 2.06  & 1.89E-08 & 4.98E-09 & 1.19E-09 & 2.06 \\
            \midrule
            & \multicolumn{4}{@{}c@{}}{$\norm{c_1(\cdot,t_k) - c_{1h}^k}_{H^1\qty(\Omega)}$} & \multicolumn{4}{@{}c@{}}{$\norm{\bar c_2(\cdot,t_k) - \bar c_{2h}^k}_{L^2\qty(\Omega_2)}$} \\
            \cline{2-5}\cline{6-9}%
            k & $R_{\Delta r}=1$ & $R_{\Delta r}=2$ & $R_{\Delta r}=3$ & Rates & $R_{\Delta r}=1$ & $R_{\Delta r}=2$ & $R_{\Delta r}=3$ & Rates \\
            \midrule
            2 & 1.87E-02 & 4.34E-03 & 1.02E-03  & 2.10 & 4.20E-03 & 1.01E-03 & 2.38E-04 & 2.08 \\

            4 & 9.75E-03 & 2.49E-03 & 5.93E-04  & 2.07 & 1.94E-03 & 4.41E-04 & 1.05E-04 & 2.08 \\
            
            6 & 2.15E-03 & 5.88E-04 & 1.42E-04  & 2.05 & 1.38E-03 & 3.25E-04 & 7.74E-05 & 2.07 \\
            
            8 & 8.44E-03 & 2.03E-03 & 4.84E-04  & 2.07 & 1.08E-03 & 2.54E-04 & 6.05E-05 & 2.07 \\
            
            10 & 1.09E-02 & 2.75E-03 & 6.57E-04 & 2.07  & 9.64E-04 & 2.28E-04 & 5.44E-05 & 2.07 \\
            \midrule
            & \multicolumn{4}{@{}c@{}}{$\norm{c_2(\cdot,t_k) - c_{2h\Delta r}^k}_{L^2\qty(\Omega_2;H^1_r\qty(0,R_\text{s}\qty(\cdot)))}$} & \multicolumn{4}{@{}c@{}}{$\norm{c_2(\cdot,t_k) - c_{2h\Delta r}^k}_{L^2\qty(\Omega_2;L^2_r\qty(0,R_\text{s}\qty(\cdot)))}$} \\
            \cline{2-5}\cline{6-9}%
            k & $R_{\Delta r}=1$ & $R_{\Delta r}=2$ & $R_{\Delta r}=3$ & Rates & $R_{\Delta r}=1$ & $R_{\Delta r}=2$ & $R_{\Delta r}=3$ & Rates \\
            \midrule
            2 & 1.72E-04 & 8.55E-05 & 4.17E-05  & 1.04 & 2.01E-12 & 5.02E-13 & 1.23E-13 & 2.04 \\
            4 & 1.33E-04 & 6.60E-05 & 3.22E-05  & 1.03 & 1.69E-12 & 4.26E-13 & 1.03E-13 & 2.04 \\
            6 & 1.09E-04 & 5.44E-05 & 2.65E-05  & 1.03 & 1.48E-12 & 3.71E-13 & 9.00E-14 & 2.04 \\
            8 & 9.85E-05 & 4.90E-05 & 2.39E-05  & 1.03 & 1.41E-12 & 3.52E-13 & 8.51E-14 & 2.05 \\
            10 & 8.94E-05 & 4.46E-05 & 2.18E-05 & 1.03  & 1.34E-12 & 3.35E-13 & 8.10E-14 & 2.05 \\
            \botrule
        \end{tabular*}
    \end{table*}

    \subsection{Case 2: The P4D model}\label{app_sec:DFN_exp_3d}

        In this case, we set $\Omega_\mathrm{n} = [0,100] \times [0,207] \times [0,137]$, $\Omega_\mathrm{s} = [100,125] \times [0,207]\times [0,137]$, and $\Omega_\mathrm{p} = [125,225] \times [0,207]\times [0,137]$, with all dimensions in $10^{-6}\mathrm{m}$. The boundary $\Gamma$ is given by $\Gamma_\mathrm{n} \cup \Gamma_\mathrm{p}$, where $\Gamma_\mathrm{n} = \qty{0} \times \qty[0,207] \times [0,137]$ and $\Gamma_\mathrm{p} = \qty{225} \times \qty[0,207]\times [0,137]$. A 1C discharge rate is applied. Temporal discretization effects can be neglected due to the small time step $\Delta t = 0.15625 \mathrm{s}$.

        For the $h$-convergence test, the radial grid is fixed using a non-uniform mesh $\set{1-\frac{1}{2^n}}_{n=1}^{9}\cup \set{0,1}$ (in $10^{-5}\mathrm{m}$). The solution on a fine spatial mesh with refinement level $R_h = 3$ is taken as the reference. Refining the spatial mesh from $R_h = 0$ (Fig.~\ref{fig:DFN_exp_3d_mesh_r0}) to $R_h = 2$ (Fig.~\ref{fig:DFN_exp_3d_mesh_r2}), the errors and observed convergence rates at $t_k = k\Delta t$ are reported in Table~\ref{tab:DFN_exp_3d_err_h}, confirming the expected $\mathcal{O}(h)$ convergence. We remark that the convergence rate is calculated using errors from the $R_h=0$ and $R_h=1$ cases, rather than the last pair ($R_h=1$ and $R_h=2$), as the mesh with $R_h=2$ is already sufficiently close to the reference mesh.

        For the $\Delta r$-convergence test, we fix the spatial mesh to $R_h = 2$ and use an initially uniform radial grid with spacing $\Delta r = 1.25\times10^{-6} \mathrm{m}$. The reference solution is computed on a highly refined radial mesh with $R_{\Delta r} = 5$. The corresponding errors and convergence rates at $t_k = k\Delta t$ are shown in Table~\ref{tab:DFN_exp_3d_err_r}, which are consistent with the theoretical predictions.

        Due to computational resource limitations, we are unable to compute the reference solution on a more refined spatial mesh ($R_h = 4$) or combine $R_h = 3$ with a fine uniform radial grid. Nevertheless, the use of a non-uniform radial mesh for the $h$-convergence test and a moderately refined spatial mesh ($R_h = 2$) for the $\Delta r$-convergence test is sufficient to validate the theoretical convergence rates.

        \begin{figure}[!htbp] 
            \centering
            \subfigure[]{
                \includegraphics[width=0.48\textwidth]{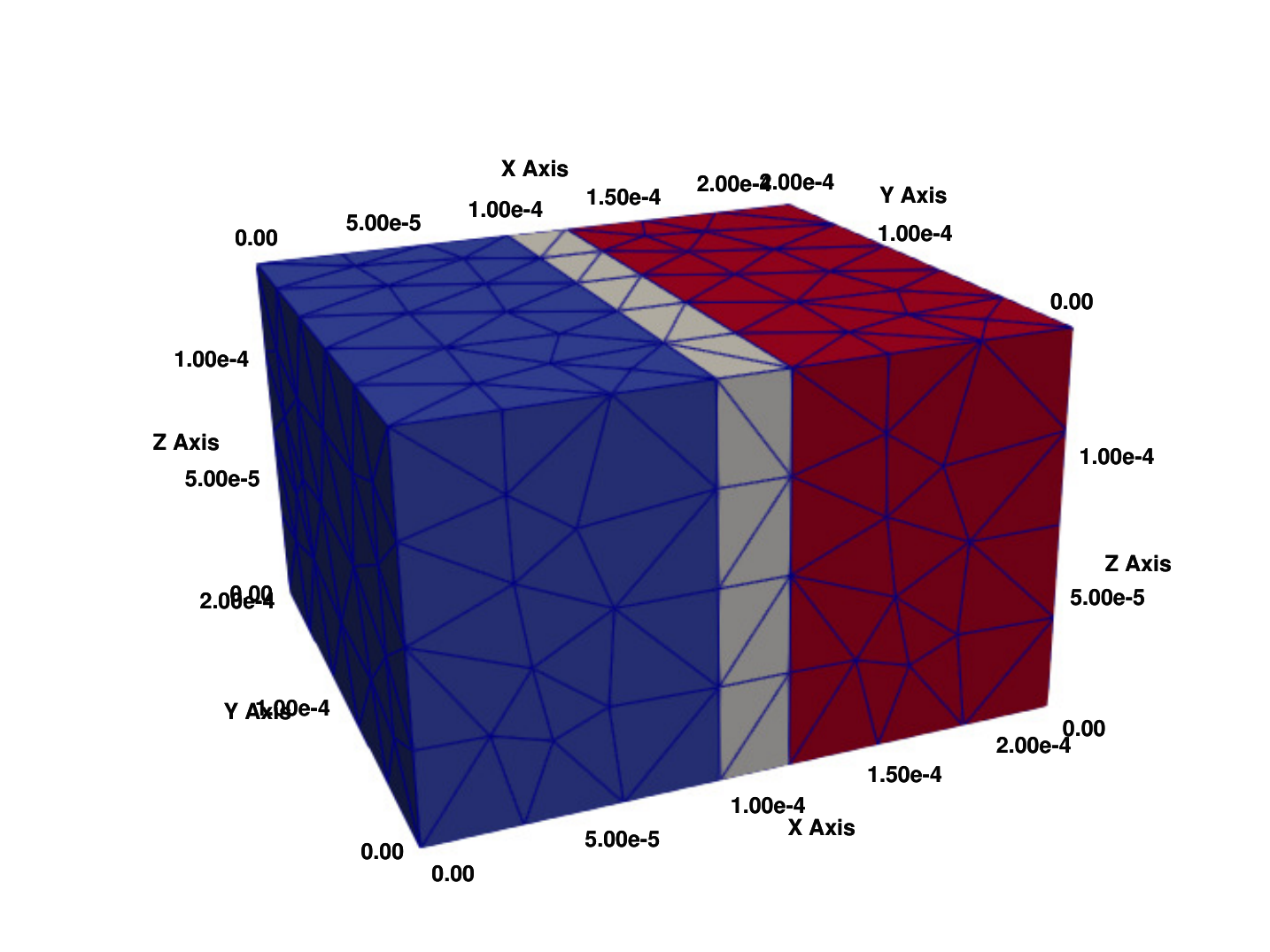}
                \label{fig:DFN_exp_3d_mesh_r0}
            }%
            ~
            \subfigure[]{
                \includegraphics[width=0.48\textwidth]{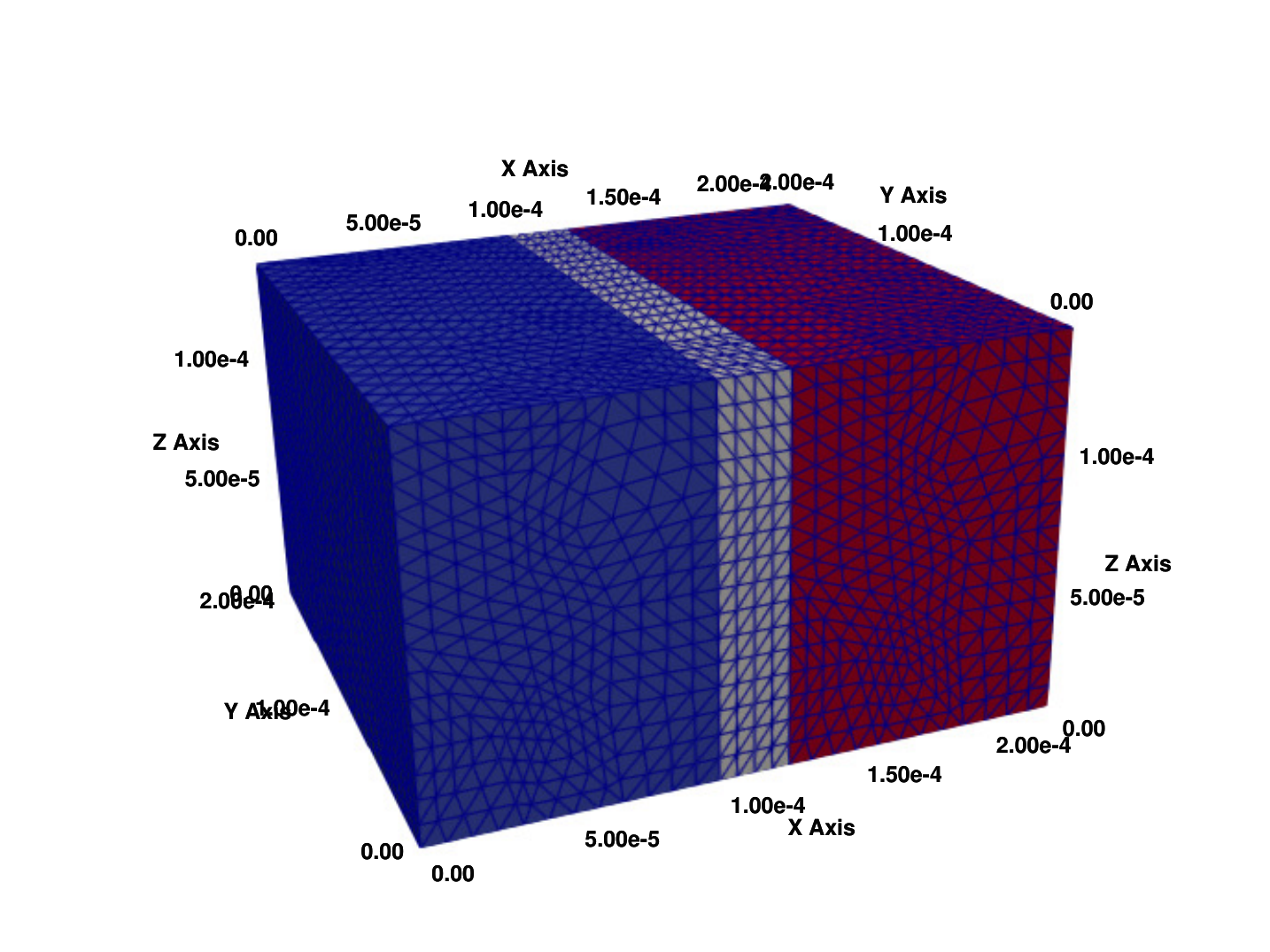}
                \label{fig:DFN_exp_3d_mesh_r2}
            } 
            \caption{Spatial meshes for 3d convergence verification. (a) Initial coarse mesh ($R_h=0$). (b) Uniformly refined mesh ($R_h=2$).}
            \label{fig:DFN_exp_mesh}
        \end{figure}

        \begin{table*}[h]
            \caption{Error and convergence order for $h$.\label{tab:DFN_exp_3d_err_h}}
            \tabcolsep=0pt
            \begin{tabular*}{\textwidth}{@{\extracolsep{\fill}}lcccccccc@{\extracolsep{\fill}}}
                \toprule%
                & \multicolumn{4}{@{}c@{}}{$\norm{\phi_1(\cdot,t_k) - \phi_{1h}^k}_{H^1\qty(\Omega)}$} & \multicolumn{4}{@{}c@{}}{$\norm{\phi_2(\cdot,t_k) - \phi_{2h}^k}_{H^1\qty(\Omega_2)}$} \\
                \cline{2-5}\cline{6-9}%
                k & $R_h=0$ & $R_h=1$ & $R_h=2$ & Rates & $R_h=0$ & $R_h=1$ & $R_h=2$ & Rates \\
                \midrule
                 32 & 3.72E-05 & 1.84E-05 & 8.29E-06 & 1.01 & 9.94E-07 & 4.97E-07 & 2.25E-07 & 1.00 \\
                 64 & 4.00E-05 & 1.98E-05 & 8.90E-06 & 1.01 & 9.94E-07 & 4.97E-07 & 2.25E-07 & 1.00 \\
                 96 & 4.20E-05 & 2.08E-05 & 9.32E-06 & 1.02 & 9.94E-07 & 4.97E-07 & 2.25E-07 & 1.00 \\
                128 & 4.36E-05 & 2.15E-05 & 9.67E-06 & 1.02 & 9.94E-07 & 4.97E-07 & 2.25E-07 & 1.00 \\
                \midrule
                & \multicolumn{4}{@{}c@{}}{$\norm{c_1(\cdot,t_k) - c_{1h}^k}_{H^1\qty(\Omega)}$} & \multicolumn{4}{@{}c@{}}{$\norm{\bar c_2(\cdot,t_k) - \bar c_{2h}^k}_{L^2\qty(\Omega_2)}$} \\
                \cline{2-5}\cline{6-9}%
                k & $R_h=0$ & $R_h=1$ & $R_h=2$ & Rates & $R_h=0$ & $R_h=1$ & $R_h=2$ & Rates \\
                \midrule
                 32 & 3.49E-01 & 1.76E-01 & 7.94E-02 & 0.99 & 2.34E-05 & 1.15E-05 & 5.17E-06 & 1.02 \\
                 64 & 4.17E-01 & 2.08E-01 & 9.36E-02 & 1.00 & 3.55E-05 & 1.75E-05 & 7.83E-06 & 1.02 \\
                 96 & 4.63E-01 & 2.30E-01 & 1.03E-01 & 1.01 & 4.59E-05 & 2.26E-05 & 1.01E-05 & 1.02 \\
                128 & 5.03E-01 & 2.49E-01 & 1.12E-01 & 1.02 & 5.54E-05 & 2.73E-05 & 1.22E-05 & 1.02 \\
                \midrule
                & \multicolumn{4}{@{}c@{}}{$\norm{c_2(\cdot,t_k) - c_{2h\Delta r}^k}_{L^2\qty(\Omega_2;H^1_r\qty(0,R_\text{s}\qty(\cdot)))}$} & \multicolumn{4}{@{}c@{}}{$\norm{c_2(\cdot,t_k) - c_{2h\Delta r}^k}_{L^2\qty(\Omega_2;L^2_r\qty(0,R_\text{s}\qty(\cdot)))}$} \\
                \cline{2-5}\cline{6-9}%
                k & $R_h=0$ & $R_h=1$ & $R_h=2$ & Rates & $R_h=0$ & $R_h=1$ & $R_h=2$ & Rates \\
                \midrule
                 32 & 2.46E-07 & 1.21E-07 & 5.43E-08 & 1.02 & 1.08E-13 & 5.35E-14 & 2.39E-14 & 1.02 \\
                 64 & 3.10E-07 & 1.53E-07 & 6.84E-08 & 1.02 & 1.94E-13 & 9.56E-14 & 4.28E-14 & 1.02 \\
                 96 & 3.59E-07 & 1.77E-07 & 7.91E-08 & 1.02 & 2.76E-13 & 1.36E-13 & 6.09E-14 & 1.02 \\
                128 & 3.99E-07 & 1.97E-07 & 8.80E-08 & 1.02 & 3.57E-13 & 1.76E-13 & 7.87E-14 & 1.02 \\
                \botrule
            \end{tabular*}
        \end{table*}

        \begin{table*}[h]
            \caption{Error and convergence order for $\Delta r$.\label{tab:DFN_exp_3d_err_r}}
            \tabcolsep=0pt
            \begin{tabular*}{\textwidth}{@{\extracolsep{\fill}}lcccccccc@{\extracolsep{\fill}}}
                \toprule%
                & \multicolumn{4}{@{}c@{}}{$\norm{\phi_1(\cdot,t_k) - \phi_{1h}^k}_{H^1\qty(\Omega)}$} & \multicolumn{4}{@{}c@{}}{$\norm{\phi_2(\cdot,t_k) - \phi_{2h}^k}_{H^1\qty(\Omega_2)}$} \\
                \cline{2-5}\cline{6-9}%
                k & $R_{\Delta r}=1$ & $R_{\Delta r}=2$ & $R_{\Delta r}=3$ & Rates & $R_{\Delta r}=1$ & $R_{\Delta r}=2$ & $R_{\Delta r}=3$ & Rates \\
                \midrule
                2 & 1.54E-07 & 3.54E-08 & 8.24E-09 & 2.10 & 6.43E-10 & 1.49E-10 & 3.48E-11 & 2.10 \\
                4 & 2.16E-07 & 5.31E-08 & 1.26E-08 & 2.08 & 9.01E-10 & 2.19E-10 & 5.19E-11 & 2.08 \\
                6 & 5.75E-08 & 1.53E-08 & 3.66E-09 & 2.06 & 2.40E-10 & 6.28E-11 & 1.50E-11 & 2.07 \\
                8 & 3.02E-08 & 7.28E-09 & 1.73E-09 & 2.07 & 1.19E-10 & 2.85E-11 & 6.77E-12 & 2.07 \\
               10 & 5.55E-08 & 1.51E-08 & 3.63E-09 & 2.07 & 2.10E-10 & 5.76E-11 & 1.38E-11 & 2.06 \\
                
                \midrule
                & \multicolumn{4}{@{}c@{}}{$\norm{c_1(\cdot,t_k) - c_{1h}^k}_{H^1\qty(\Omega)}$} & \multicolumn{4}{@{}c@{}}{$\norm{\bar c_2(\cdot,t_k) - \bar c_{2h}^k}_{L^2\qty(\Omega_2)}$} \\
                \cline{2-5}\cline{6-9}%
                k & $R_{\Delta r}=1$ & $R_{\Delta r}=2$ & $R_{\Delta r}=3$ & Rates & $R_{\Delta r}=1$ & $R_{\Delta r}=2$ & $R_{\Delta r}=3$ & Rates \\
                \midrule
                2 & 2.01E-04 & 4.68E-05 & 1.10E-05 & 2.09 & 4.92E-05 & 1.18E-05 & 2.78E-06 & 2.08 \\
                4 & 9.61E-05 & 2.49E-05 & 5.94E-06 & 2.07 & 2.27E-05 & 5.16E-06 & 1.22E-06 & 2.08 \\
                6 & 1.49E-05 & 3.92E-06 & 9.52E-07 & 2.04 & 1.68E-05 & 3.80E-06 & 9.05E-07 & 2.07 \\
                8 & 7.40E-05 & 1.78E-05 & 4.23E-06 & 2.07 & 1.31E-05 & 2.98E-06 & 7.08E-07 & 2.07 \\
               10 & 1.12E-04 & 2.87E-05 & 6.86E-06 & 2.07 & 1.17E-05 & 2.67E-06 & 6.37E-07 & 2.07 \\
                
                \midrule
                & \multicolumn{4}{@{}c@{}}{$\norm{c_2(\cdot,t_k) - c_{2h\Delta r}^k}_{L^2\qty(\Omega_2;H^1_r\qty(0,R_\text{s}\qty(\cdot)))}$} & \multicolumn{4}{@{}c@{}}{$\norm{c_2(\cdot,t_k) - c_{2h\Delta r}^k}_{L^2\qty(\Omega_2;L^2_r\qty(0,R_\text{s}\qty(\cdot)))}$} \\
                \cline{2-5}\cline{6-9}%
                k & $R_{\Delta r}=1$ & $R_{\Delta r}=2$ & $R_{\Delta r}=3$ & Rates & $R_{\Delta r}=1$ & $R_{\Delta r}=2$ & $R_{\Delta r}=3$ & Rates \\
                \midrule
                2 & 2.01E-06 & 1.00E-06 & 4.88E-07 & 1.04 & 2.36E-14 & 5.88E-15 & 1.43E-15 & 2.04 \\
                4 & 1.55E-06 & 7.72E-07 & 3.77E-07 & 1.03 & 1.98E-14 & 4.99E-15 & 1.21E-15 & 2.04 \\
                6 & 1.28E-06 & 6.36E-07 & 3.10E-07 & 1.03 & 1.73E-14 & 4.35E-15 & 1.05E-15 & 2.04 \\
                8 & 1.15E-06 & 5.74E-07 & 2.80E-07 & 1.03 & 1.64E-14 & 4.12E-15 & 9.96E-16 & 2.05 \\
               10 & 1.05E-06 & 5.22E-07 & 2.55E-07 & 1.03 & 1.57E-14 & 3.92E-15 & 9.48E-16 & 2.05 \\
               
                \botrule
            \end{tabular*}
        \end{table*}


\appendix

\section{Proof of Lemma~\ref{lemma:DFN_fem_estimation_Phi}}\label{app_sec:DFN_err_phi}
    \begin{proof}
    Let $\Phi = \qty(\phi_1, \phi_2)$, $\Psi = \qty(\varphi_1, \varphi_2) \in H^1_*(\Omega)\times H^1(\Omega_2)=: V$,  and the dual space of $V$ is denoted by $V^*$. Define the operators 
    \begin{gather*}
        B\colon V \rightarrow V^*,\quad  \langle B(\Phi), \Psi\rangle :=\sum_{m \in \qty{ \mathrm{n,p}}}\int_{\Omega_m} a_2 J_m\left(c_{1}, \bar c_{2}, \eta \right)  \qty(\varphi_2 - \varphi_1) \,\dd x,\\ 
        D \in V^*, \quad D(\Psi) := -\int_{\Omega} \kappa_2 \nabla f\left(c_1\right) \cdot \nabla \varphi_1 \, \dd x + \int_\Gamma I \varphi_2 \, \dd x,
    \end{gather*}
    and the bilinear form 
    \begin{displaymath}
        a\colon V\times V \rightarrow \mathbb{R}, \quad a(\Phi, \Psi):= \int_{\Omega} \kappa_1\nabla \phi_1 \cdot \nabla \varphi_1 \, \dd x + \int_{\Omega_2}\sigma \nabla \phi_2 \cdot \nabla \varphi_2 \, \dd x.
    \end{displaymath}
    Then \eqref{eq:DFN_weak_phi1} and \eqref{eq:DFN_weak_phi2} could be reformulated into the equation:
    \begin{equation}\label{app_eq:DFN_weak_Phi}
        a\left(\Phi, \Psi\right)+\langle B\qty(\Phi), \Psi\rangle+D\left(\Psi\right)=0, \quad \forall \Psi \in V.
    \end{equation}
    For the finite element discretization, let $\Phi_h = \qty(\phi_{1h}, \phi_{2h}),\; \Psi_h = \qty(\varphi_{1h}, \varphi_{2h}) \in W_h (\bar \Omega)\times V^{(1)}_h(\bar \Omega_2)=:V_h$, and $V_h^{*}$ be the dual space of $V_h$. 
    Likewise, we define
    \begin{gather*}
        B_h\colon V_h \rightarrow V_h^{*},\quad \langle B_h(\Phi_h), \Psi_h\rangle= \sum_{m \in \qty{ \mathrm{n,p}}}\int_{\Omega_m} a_2 J_m\left(c_{1h}, \bar c_{2h}, \eta_h \right) \qty(\varphi_{2h} - \varphi_{1h})\, \dd x, \\ 
        D_h \in V_h^{*}, \quad D_h(\Psi_h) = -\int_{\Omega} \kappa_{2h} \nabla f\left(c_{1h}\right) \cdot \nabla \varphi_{1h} \, \dd x + \int_\Gamma I \varphi_{2h} \, \dd x,
    \end{gather*}
    the bilinear form $a_h\colon V_h\times V_h \rightarrow \mathbb{R}$, 
    \begin{displaymath}
        a_h(\Phi_h, \Psi_h):= \int_{\Omega} \kappa_{1h}\nabla \phi_{1h} \cdot \nabla \varphi_{1h} \, \dd x + \int_{\Omega_2}\sigma \nabla \phi_{2h} \cdot \nabla \varphi_{2h} \, \dd x.
    \end{displaymath}
    \eqref{eq:DFN_fem_phi1} and \eqref{eq:DFN_fem_phi2} could also be reformulated as
    \begin{equation}\label{app_eq:DFN_fem_Phi}
    a_h\left(\Phi_h, \Psi_h\right)+\left\langle B_h\left(\Phi_h\right), \Psi_h\right\rangle+D_h\left(\Psi_h\right)=0, \quad \forall \Psi_h \in V_h.
    \end{equation} 
    Since $V_h \subset V$, \eqref{app_eq:DFN_weak_Phi} implies
    \begin{equation}\label{app_eq:DFN_weak_Phi_test_h}
    a\left(\Phi, \Psi_h\right)+\langle B\qty(\Phi), \Psi_h\rangle+D\left(\Psi_h\right)=0, \quad \forall \Psi_h \in V_h.
    \end{equation}
    Define the projection operator $P_{h}^1$ from $H_*^1(\Omega)$ to $W_h(\bar \Omega)$ such that $\forall \phi_1 \in H_*^1(\Omega)$,
    \begin{equation*}
    \int_{\Omega} \kappa_1 \nabla\left(\phi_1-P_{h}^1 \phi_1\right) \cdot \nabla \varphi_h\, \dd x =0, \quad \forall \varphi_h \in W_h\qty(\bar \Omega),
    \end{equation*}
    and the projection operator $P_{h}^2$ from $H^1(\Omega_2)$ to $V_h^{(1)}(\bar \Omega_2)$ such that $\forall \phi_2 \in H^1( \Omega_2)$,
    \begin{equation*}
    \int_{\Omega_2}  \sigma \nabla\left(\phi_2-P_{h}^2 \phi_2\right) \cdot \nabla \varphi_h\, \dd x+ \int_{\Omega_2} \underline{\sigma} \left(\phi_2-P_{h}^2 \phi_2\right) \varphi_h \, \dd x=0, \quad \forall \varphi_h \in V_h^{(1)}(\bar \Omega_2).
    \end{equation*}
    Then, we can decompose the finite element approximation error as
    \begin{equation}\label{app_eq:DFN_fem_phi_err_decomp}
    \begin{aligned}
    & \phi_1-\phi_{1 h}=\left(\phi_1-P_h^{1} \phi_1\right)+\left(P_h^{1} \phi_1-\phi_{1 h}\right)=:\rho_1+\theta_1, \\
    & \phi_2-\phi_{2 h}=\left(\phi_2-P_h^2 \phi_2\right)+\left(P_h^2 \phi_2-\phi_{2 h}\right)=:\rho_2+\theta_2. \\
    \end{aligned}
    \end{equation}
    Setting $\rho = \qty(\rho_1,\rho_2)$, $\theta = \qty(\theta_1,\theta_2)$, we have $\Phi-\Phi_h=\rho+\theta$.
    Under the Assumption~\ref{asp:DFN_fem_regularity}, we have the error estimates
    \begin{equation}\label{app_eq:DFN_fem_phi_projection_error}
    \norm{\rho_1}_{H^1\qty(\Omega)}  \le Ch\norm{\phi_1}_{H^2_\mathrm{pw}\qty(\Omega_1)}\qcomma{} \norm{\rho_2}_{H^1\qty(\Omega_2)}  \le Ch\norm{\phi_2}_{H^2_\mathrm{pw}\qty(\Omega_2)}.
    \end{equation} 
    Therefore, once the estimates of $\theta_i$ are given, the finite element error would be obtained by \eqref{app_eq:DFN_fem_phi_err_decomp}.
    
    Setting $P_h \Phi := \qty(P_h^1 \phi_1,P_h^2 \phi_2)$, substract \eqref{app_eq:DFN_fem_Phi} from \eqref{app_eq:DFN_weak_Phi_test_h} and by virtue of \eqref{app_eq:DFN_fem_phi_err_decomp},
    \begin{multline}\label{app_eq:DFN_theta_eqn}
            a_h\left(\theta, \Psi_h\right)+\langle B\left(P_h \Phi\right)-B\left(\Phi_h\right), \Psi_h\rangle
             =\left\langle B\left(P_h \Phi\right)-B(\Phi), \Psi_h\right\rangle+\left\langle B_h\left(\Phi_h\right)-B\left(\Phi_h\right), \Psi_h\right\rangle \\
             +\qty(a_h\left(P_h \Phi, \Psi_h\right) - a\left(P_h \Phi, \Psi_h\right)) +\qty(D_h\left(\Psi_h\right) -D\left(\Psi_{h}\right)) - a\left(\rho, \Psi_h\right).
    \end{multline}
    For any $\Phi = \qty(\phi_1, \phi_2),\:\tilde\Phi = \qty(\tilde\phi_1, \tilde\phi_2)\in V$, setting $\tilde{\eta}= \tilde\phi_2 - \tilde\phi_1 - U$, 
    \begin{align*}
          \langle B(\Phi)-B(\tilde{\Phi}), \Phi-\tilde{\Phi}\rangle 
        = &\sum_{m \in \qty{ \mathrm{n,p}}}\int_{\Omega_m} a_2\qty(J_m\left(c_1, \bar c_2, \eta\right) - J_m\qty(c_1, \bar{c}_2, \tilde\eta))\qty[\left(\phi_2-\tilde{\phi}_2\right)-\left(\phi_1-\tilde{\phi}_1\right)] \, \dd x \nonumber \\
        = &\sum_{m \in \qty{ \mathrm{n,p}}}\int_{\Omega_m} a_2 \frac{\partial J_m}{\partial \eta}\left(c_1, \bar{c}_2, \xi\right)\left[\left(\phi_2-\tilde{\phi}_2\right)-\left(\phi_1-\tilde{\phi}_1\right)\right]^2 \, \dd x \nonumber\\
        \ge& C \int_{\Omega_2} \left[\left(\phi_2-\tilde{\phi}_2\right)-\left(\phi_1-\tilde{\phi}_1\right)\right]^2  \, \dd x \nonumber\\
        = &C\left(\int_{\Omega_2}\left|\phi_2-\tilde{\phi}_2\right|^2+\left|\phi_1-\tilde{\phi}_1\right|^2\, \dd x - 2 \int_{\Omega_2}\left(\phi_2-\tilde\phi_2\right)\left(\phi_1-\tilde\phi_1\right)\, \dd x\right)  \nonumber\\
        \ge & C \qty(\qty(1-2\epsilon)\int_{\Omega_2}\left|\phi_2-\tilde{\phi}_2\right|^2 \, \dd x + \qty(1- \frac{1}{2\epsilon})\int_{\Omega_2}\left|\phi_1-\tilde{\phi}_1\right|^2 \, \dd x)  \, \dd x.  
    \end{align*}  
    Moreover, by Assumption~\ref{asp:DFN_fem_prior_bound}, $c_{1h}$ is uniformly bounded away from zero, so that $\kappa_{1h} =\kappa_1\qty(c_{1h})\ge C > 0$. Hence, 
    \begin{multline*}
        a_h\left(\Phi-\tilde{\Phi}, \Phi-\tilde{\Phi}\right)+\langle B(\Phi)-B(\tilde{\Phi}), \Phi-\tilde{\Phi}\rangle   
        \ge C_1\norm{\nabla \qty(\phi_1 - \tilde\phi_{1})}_{L^2(\Omega)}^2 \\+ \underline{\sigma} \norm{\nabla \qty(\phi_2 - \tilde\phi_{2})}_{L^2(\Omega_2)}^2 + C_2\qty(1- \frac{1}{2\epsilon}) \norm{\phi_1 - \tilde\phi_{1}}_{L^2(\Omega)}^2   + C_2 \qty(1-2\epsilon) \norm{\phi_2 - \tilde\phi_{2}}_{L^2(\Omega_2)}^2. 
    \end{multline*}  
    By Poincar\'e inequality, $\norm{\phi_1 - \tilde\phi_{1}}_{L^2(\Omega)}\le C_p\norm{\nabla \qty(\phi_1 - \tilde\phi_{1})}_{L^2(\Omega)} $. Selecting $\epsilon <\frac{1}{2}$ sufficiently large, we then have
    \begin{equation*}
            a_h\left(\Phi-\tilde{\Phi}, \Phi-\tilde{\Phi}\right)+\langle B(\Phi)-B(\tilde{\Phi}), \Phi-\tilde{\Phi}\rangle 
            \ge C\qty( \norm{\nabla\qty(\phi_1 - \tilde\phi_{1})}^2_{L^2(\Omega)} + \norm{\phi_2 - \tilde\phi_{2}}^2_{H^1(\Omega_2)}). 
    \end{equation*} 
    Therefore, when taking $\Psi_h = \theta$ in \eqref{app_eq:DFN_theta_eqn}, on the left-hand side, 
    \begin{equation*}
        a_h\left(\theta, \theta\right)+\langle B\left(P_h \Phi\right)-B\left(\Phi_h\right), P_h \Phi - \Phi_h\rangle \ge C \norm{\theta}^2_V.
    \end{equation*}
    Since Assumption~\ref{asp:nonlinear_function} ensures that the nonlinear functions $J_m$, $U_m$, $\kappa_i$ and $f'$ are bouded and Lipschitz continuous on any bounded interval, and Assumptions~\ref{asp:DFN_prior_bound} and \ref{asp:DFN_fem_prior_bound} guarantee  the solution and its approximation remain within such intervals, it follows that the terms on the right-hand side of \eqref{app_eq:DFN_theta_eqn} satisfy the estimate
    \begin{equation*}
        \begin{aligned}
        \langle B(P_h\Phi)-B(\Phi), \theta\rangle 
        = & \sum_{m \in \qty{ \mathrm{n,p}}} \int_{\Omega_m} a_2\left(J_m \left( c_1, \bar c_2, P_h^2\phi_2 - P_h^1\phi_1 - U\right) - J_m\left(c_1, \bar c_2, \eta \right)\right)\left(\theta_2-\theta_1\right)\, \dd x \\
        \leq & C\left(\left\|\phi_1-P_h^1{\phi}_1\right\|_{L^2(\Omega)}+\left\|\phi_2-P_h^2{\phi}_2\right\|_{L^2(\Omega_2)}\right)\|\theta\|_{{L^2(\Omega)} \times L^2(\Omega_2)} \\
        \leq & Ch\qty(\norm{\phi_1}_{H^2_\mathrm{pw}(\Omega_1)}+ \norm{\phi_2}_{H^2_\mathrm{pw}(\Omega_2)}) \|\theta\|_{{L^2(\Omega)} \times L^2(\Omega_2)},
        \end{aligned} 
    \end{equation*}
    \begin{equation*}
        \begin{aligned}
            \langle B(\Phi_h)-B_h(\Phi_h), \theta \rangle =&\sum_{m \in \qty{ \mathrm{n,p}}} \int_{\Omega_m}  a_2\qty(J_m \left( c_1, \bar c_2, \phi_{2h} -  \phi_{1h} -U \right) -J_m\left( c_{1h}, \bar c_{2h}, \eta_h \right)) \left(\theta_{2}-\theta_{1}\right) \, \dd x\\
       \leq &C\left( \left\|c_1-c_{1h}\right\|_{L^2(\Omega_2)}+\left\|\bar c_2-\bar c_{2h}\right\|_{L^2(\Omega_2)} + \norm{U-U_h}_{L^2(\Omega_2)} \right)\|\theta\|_{{L^2(\Omega_1)} \times L^2(\Omega_2)} \\
       \leq &  C \qty( \left\|c_1-c_{1h}\right\|_{L^2(\Omega)}+\left\|\bar c_2-\bar c_{2h}\right\|_{L^2(\Omega_2)} ) \|\theta\|_{{L^2(\Omega)} \times L^2(\Omega_2)},
        \end{aligned}
    \end{equation*}
    \begin{equation*}
        \begin{aligned}
        a_h\left(P_h\Phi, \theta\right) - a\left(P_h\Phi, \theta\right) = &\int_{\Omega}\kappa_{1h} \nabla \qty(P_h^{1} \phi_1 - \phi_{1 }) \cdot \nabla \theta_{1} \, \dd x + \int_{\Omega}\left(\kappa_{1h}-\kappa_{1}\right) \nabla \phi_{1} \cdot \nabla \theta_{1} \, \dd x \\
        & + \int_{\Omega} \kappa_{1} \nabla \qty( \phi_{1} - P_h^{1} \phi_1 )\cdot \nabla \theta_{1} \, \dd x\\ 
        \leq & Ch\norm{\phi_1}_{H^2_\mathrm{pw}(\Omega_1)}\left\|\nabla \theta_1\right\|_{L^2(\Omega) } + C\left\|c_1-c_{1 h}\right\|_{L^2(\Omega) } \left\|\nabla \theta_1\right\|_{L^2(\Omega) },
        \end{aligned} 
    \end{equation*}
    \begin{equation}\label{app_eq:DFN_err_phi_D_error}
        \begin{aligned}
            \left|D(\theta)-D_h(\theta)\right|=&\left|\int_{\Omega}\qty[\kappa_2 \nabla f\left(c_1\right)-\kappa_{2h} \nabla f\left(c_{1h}\right)] \cdot \nabla \theta_1\right| \, \dd x\\
            \leq & \int_{\Omega}\left|\kappa_2 -\kappa_{2h}\right|\left|\nabla f\left(c_1\right)\right|\qty|\nabla \theta_1|\, \dd x  
            +\int_{\Omega}\left|\kappa_{2h}\right|\qty|f'\qty(c_1)- f'\qty(c_{1h})| \left|\nabla c_1\right||\nabla \theta_1|\, \dd x \\ 
            & +\int_{\Omega}\left|\kappa_{2h}\right| \qty| f'\qty(c_{1h})|  \left|\nabla c_{1} - \nabla c_{1 h}\right||\nabla \theta_1| \, \dd x\\
            \leq& C\left\|c_1-c_{1h} \right\|_{H^1(\Omega)}\|\nabla \theta_1\|_{L^2(\Omega) }.
        \end{aligned}
    \end{equation}
    We remark that the derivation of \eqref{app_eq:DFN_err_phi_D_error} does not require any assumption on the $L^{\infty}$-boundedness of $\nabla c_{1h}$.
    In addition, we have
    \begin{equation*}
        \begin{aligned} 
            a\left(\rho, \theta\right)& =\int_{\Omega} \kappa_1 \nabla\left(\phi_1-P_h^1 \phi_1\right) \cdot \nabla \theta_1 \, \dd x +\int_{\Omega_2} \sigma \nabla\left(\phi_2-P_h^2 \phi_2\right) \cdot \nabla \theta_2  \, \dd x\\
            & =-\int_{\Omega_2} \underline{\sigma}\left(\phi_2-P_h^2 \phi_2\right) \cdot \theta_2 \, \dd x \\
            & \le Ch\norm{\phi_2}_{H^2_\mathrm{pw}\qty(\Omega_2)}\norm{\theta_2}_{L^2(\Omega_2) }.
        \end{aligned}
    \end{equation*}
    By the Young's inequality with the coefficients before $\|\theta\|_{{L^2(\Omega)} \times L^2(\Omega_2)}$, $\norm{\theta_2}_{L^2(\Omega_2)}$ and  $\|\nabla \theta_1\|_{L^2(\Omega) }$ sufficiently small, we have
    \begin{equation*}
        \norm{\theta}_{V}^2  \le Ch^2\qty(\norm{\phi_1}_{H^2_\mathrm{pw}(\Omega_1)}^2 + \norm{\phi_2}_{H^2_\mathrm{pw}(\Omega_2)}^2)
        + C\qty( \norm{c_1 - c_{1h}}_{H^1(\Omega)}^2 + \norm{\bar c_2 - \bar c_{2h}}_{L^2(\Omega_2)}^2 ).
    \end{equation*}
    Finally, the lemma is proved by \eqref{app_eq:DFN_fem_phi_err_decomp} and \eqref{app_eq:DFN_fem_phi_projection_error}.
    \end{proof}

\section{Proof of Lemma~\ref{lemma:DFN_fem_estimation_theta_c1}}\label{app_sec:DFN_err_theta_c1}
    
    \begin{proof}
        Using \eqref{eq:DFN_weak_c1}, \eqref{eq:DFN_fem_c1}, \eqref{eq:DFN_projection_c1} and \eqref{app_eq:DFN_fem_c1_err_decomp}, we have
        \begin{equation*}
                \int_{\Omega}  \frac{\partial \theta_{c_1}}{\partial t} v_h \, \dd x + \int_{\Omega} k_1 \nabla \theta_{c_1} \cdot \nabla v_h \, \dd x =\int_{\Omega_2} a_1\left(J-J_h\right) v_h  \, \dd x 
                -\int_{\Omega} \frac{\partial \rho_{c_1}}{\partial t} v_h \, \dd x + \int_{\Omega}  \rho_{c_1} v_h \, \dd x.
        \end{equation*}
        Taking $v_h=\theta_{c_1}$, Young's inequality and \eqref{eq:DFN_estimation_J} yield an arbitrarily small number $\epsilon > 0$, such that
        \begin{multline*}
            \frac{1}{2}\dv{}{t}\norm{\theta_{c_1}}_{L^2\qty(\Omega)}^2 + \underline{k_1} \norm{\nabla \theta_{c_1}}_{L^2\qty(\Omega)}^2 \le \norm{\rho_{c_1}}_{L^2\qty(\Omega)}^2 +  \norm{\pdv{\rho_{c_1}}{t}}_{L^2\qty(\Omega)}^2 + C\qty(\epsilon) \norm{\theta_{c_1}}_{L^2\qty(\Omega)}^2 \\ 
            + \epsilon\qty( \norm{\phi_1-\phi_{1h}}^2_{L^2\qty(\Omega)} + \norm{\phi_2-\phi_{2h}}^2_{L^2\qty(\Omega_2)} + 
            \norm{c_1 - c_{1h}}_{L^2\qty(\Omega)}^2 + \norm{\bar c_2 - \bar c_{2h}}_{L^2\qty(\Omega_2)}^2).
        \end{multline*} 
        Hence, \eqref{eq:DFN_fem_estimation_theta_c1} follows by Assumption~\ref{app_eq:DFN_fem_c1_err_decomp}, \eqref{eq:DFN_fem_c1_projection_error} and \eqref{eq:DFN_fem_c1t_projection_error}.
    \end{proof} 

\section*{Acknowledgments}
The authors would like to thank Jiming Wu, Institute of Applied Physics and Computational Mathematics, for discussion regarding optimal-order convergence. The computations were done on the high performance computers of State Key Laboratory of Scientific and Engineering Computing, Chinese Academy of Sciences. 
\section*{Funding}
This work was funded by the National Natural Science Foundation of China (grant 12371437) and the Beijing Natural Science Foundation (grant Z240001).


\bibliographystyle{apalike}
\bibliography{bib/PDE.bib, bib/FEM.bib, bib/review.bib, bib/DFN.bib, bib/DFN_heat.bib, bib/DFN_mechanics.bib}

\end{document}